\newcommand{\Pbf}{\mathbf{P}}
\newcommand{\bP}{\mathbf{P}}
\newcommand{\Ebf}{\mathbf{E}}
\newcommand{\bE}{\mathbf{E}}
\newcommand{\bbZ}{\mathbb{Z} }
\newcommand{\bbR}{\mathbb{R} }
\newcommand{\sign}{\mathrm{sign}}
\title{Some properties of the principal Dirichlet eigenfunction\\ in Lipschitz domains, via probabilistic couplings}
\author{Quentin Berger and Nicolas Bouchot}
\date{}
\begin{document}
	
	\pagestyle{plain}
	
	\maketitle

\begin{abstract}
\noindent
We study a discrete and continuous version of the spectral Dirichlet problem in an open bounded connected set $\Omega\subset \mathbb{R}^d$, in dimension $d\geq 2$.
More precisely, consider the simple random walk on $\mathbb{Z}^d$ killed upon exiting the (large) bounded domain $\Omega_N = (N\Omega)\cap \mathbb{Z}^d$.
We let~$P_N$ its transition matrix and we study the properties of its ($L^2$-normalized) principal eigenvector~$\phi_N$, also known as ground state.
Under mild assumptions on~$\Omega$, we give regularity estimates on~$\phi_N$, namely on its $k$-th order differences (or \(k\)-th order derivatives), with a uniform control inside~$\Omega_N$.
We provide a completely probabilistic proof of these estimates: our starting point is a Feynman--Kac representation of~$\phi_N$, combined with gambler's ruin estimates and a new ``multi-mirror'' coupling, which may be of independent interest.
We also obtain the same type of estimates for the first eigenfunction $\varphi_1$ of the corresponding continuous spectral Dirichlet problem, in relation with a Brownian motion killed upon exiting~$\Omega$. 
Finally, we take the opportunity to review (and slightly extend) some of the literature on the $L^2$ and uniform convergence of $\phi_N$ to~$\varphi_1$ in Lipschitz bounded domains of~$\mathbb{R}^d$, which can be derived thanks to our estimates.

\medskip
\noindent \textsc{Keywords:} Potential theory; spectral Dirichlet problem; principal eigenvalue; finite difference method; random walk; mirror coupling; gambler's ruin; Harnack inequalities.

\medskip
\noindent \textsc{2020 Mathematics subject classification:} 60G50, 65L12
\end{abstract}

\section{Introduction}

The aim of this paper is to revisit a classical spectral Dirichlet eigenvalue problem in a bounded domain \(\Omega\), from a probabilistic perspective. 
Specifically, the principal Dirichlet eigenfunction, either in a discrete or continuous setting, arises as the limiting distribution of a random walk or a Brownian motion conditioned not to exit \(\Omega\); this is known as the quasi-stationary distribution (QSD) of the random walk or Brownian motion killed upon exiting the domain.
We refer to Section~\ref{sec:intro-rw} below for more details.

Our primary goal is to provide probabilistic proofs (primarily based on gambler's ruin estimates and coupling techniques) of regularity properties for the principal eigenfunction both at the discrete and at the continuous level --- in the spirit of difference or gradient estimates and Harnack inequalities, see~\cite[\S6.3]{lawlerRandomWalkModern2010}.
Most notably, our proofs is based on a Feynman--Kac representation for the principal eigenfunction (see e.g.\ \cite[\S6.3]{dynkin2002diffusions}), combined with a novel ``multi-mirror'' coupling to derive estimates on its $k$-th order differences or derivatives.
(We refer to Section~\ref{sec:coupling} where a brief overview of the coupling is given.)
In fact, we obtain upper bounds on its \(k\)-th derivative near the boundary of~\(\Omega\), which depend (somehow explicitly) on the shape (or regularity) of \(\partial \Omega\).
Let us acknowledge here that this is an extremely classical problem, but somehow surprisingly, the estimates we obtain on the principal Dirichlet eigenfunction appear to be new; we review some of the literature in Section~\ref{sec:comments} below.

The use of coupling arguments has already proven to be very helpful in the context of spectral theory, see \cite{atarburdzy2004,kendall1989} for the study of Neumann eigenfunctions, see also \cite{burdzykendall2000,chen1998} for some overviews of other possible applications.
Let us quote here Kendall~\cite{kendall1989} to highlight the interest of probabilistic techniques for spectral problems:
``Probability is useful here because it enables a coupling argument, working on individual sample paths. Analytical arguments via the heat equation tend to integrate the manifold variety of paths from point to point, and thus lose flexibility.''
To our knowledge, such techniques have not been used for Dirichlet eigenfunctions and their discrete approximations, and we aim at applying Kendall's philosophy in this context.

Let us also mention that the present paper initiated because we could not find a proper reference for the uniform convergence of the discrete eigenvector to its continuous counterpart; we derive this convergence as a corollary of our estimates.
This is also an extremely classical topic in numerical analysis and potential theory, but its literature is extensive and can be challenging to navigate for non-experts.
To help in this regard, we take here the opportunity to review (and complete) some of the literature on this convergence, see Section~\ref{sec:convergence} below.

\subsection{The Dirichlet eigenvalue problem and a discrete approximation}
\label{sec:Dirichlet}

\paragraph*{Spectral Dirichlet problem.}

Let $d\geq 2$ and let $\Omega$ be an open, bounded and connected set of $\mathbb{R}^d$; we assume for simplicity that it contains the origin.
Consider the spectral Dirichlet problem on $\Omega$, of finding $\mu\in \mathbb{R}\,,\; v\in L^2(\Omega)$ such that
\begin{equation}
\label{eq:Dirichlet}
\begin{cases}
-\Delta  v = \mu\, v & \quad \text{ on } \Omega \,,\\ 
\quad \ \ v=0 & \quad\text{ on } \partial \Omega \,,
\end{cases}
\end{equation}
with $\Delta = \displaystyle\sum_{i=1}^d \frac{\partial^2}{\partial x_i^2}$ the usual Laplace operator.

By spectral theory it is classical that the spectrum of the Dirichlet Laplacian is discrete with \textit{positive} eigenvalues; we refer for instance to \cite[Thm.~1.2.5]{henrot2006extremum}. We denote by $(\mu_k,\varphi_k)_{k\geq 1}$ the ordered eigenvalues and $L^2$-normalized eigenfunctions. 
Additionally, the smallest eigenvalue $\mu_1$ is simple with associated eigenfunction $\varphi_1$ of constant sign, see e.g.~\cite[Sec.~1.3.3]{henrot2006extremum} or more plainly~\cite[Thm.~6.34]{Borthwick2021}.

We will mostly consider the case of a \textit{Lipschitz domain} $\Omega$, \textit{i.e.}\  such that its boundary~$\partial \Omega$ is locally the graph of a Lipschitz function, \textit{i.e.}\ $x_{d}= \psi(x_1,\ldots, x_{d-1})$ (in appropriate local coordinates) for some Lipschitz function $\psi$.
This includes for instance smooth domains, with $\mathcal{C}^1$ boundary, it also allows for (reentrant) corners in the boundary, but not for cusp.
This Lipschitz domain condition is classical in the potential theory literature, and ensures for instance that there is a bounded trace operator $\gamma:H^{1}(\Omega) \to H^{1/2}(\partial \Omega)$, see~\cite[Th.~3.38]{mclean2000strongly}; in other words, we can give a pointwise meaning to $\varphi_k=0$ on $\partial \Omega$ (see our estimates or Proposition~\ref{prop:unifbound}).
Let us point to~\cite{agranovich2013spectral} for a review of spectral problems in domains that are Lipschitz, see in particular its Section~4 for the spectral Dirichlet eigenvalue problem.

\begin{remark}
	\label{rem:dirichlet}
	Let us stress that~\eqref{eq:Dirichlet} is referred to as the \emph{spectral (or eigenvalue) Dirichlet} problem.
	There are other types of Dirichlet (Poisson) problems, namely
	\[
	\begin{cases}
		\Delta u = f & \text{ in } \Omega \,,\\
		u=0 & \text{ in } \partial \Omega \,,
	\end{cases}
	\qquad \text{or} \qquad 
	\begin{cases}
		\Delta u = 0 & \text{ in } \Omega \,,\\
		u=g & \text{ in } \partial \Omega \,,
	\end{cases}
	\]
	for some given \(f,g\).
	All these problems are of course related, but the spectral Dirichlet problem~\eqref{eq:Dirichlet} is in fact non-linear, and can thus be considered as ``more difficult''. 
\end{remark}

\paragraph*{A discrete version of the problem.}

One can discretize the Dirichlet eigenvalue problem~\eqref{eq:Dirichlet} in several ways, and we focus on the finite-difference method, because of its clear relation with the simple random walk (see Section~\ref{sec:rw} below).

Let $h>0$ be a mesh size and consider $\mathbb{Z}_h^d\defeq (h \mathbb{Z}^d)$.
We then define the discretized versions of~$\Omega$ and~$\partial \Omega$: we let $\Omega^{(h)}$ be the connected component of $0$ in $\Omega \cap \bbZ_h^d$ (viewed as a subset of the lattice $\mathbb{Z}_h^d$) and 
$\partial \Omega^{(h)}  \defeq \{ x\in h\bbZ^d\setminus \Omega^{(h)}  \,, |x-y| <h \}$.
Let also $\Delta^{(h)}$ be the discrete analogue of the Laplacian~$\Delta$: for $v : \bbZ_h^d \longrightarrow \mathbb{R}$,
\[
\Delta^{(h)} v (x) = h^{-2} \sum_{e \in \{\pm e_i , 1\leq i \leq d\}} \big( v(x+h e) - v(x) \big) \,,
\]
where $e_i$ is the $i$-th vector of the canonical base of $\mathbb{Z}^d$.
Discrete potential theory results in Lipschitz domains exist, and let us give a few examples: 
for instance, \cite{varopoulos2001potential} provides kernel and Green function estimates (in particular near the boundary); 
\cite{varopoulos2009discrete} gives some (optimal) $L^{\infty}$ convergence for solution of the classical Dirichlet problem (\textit{i.e.}\ $\Delta u =0$ in~$\Omega$, with $u=f$ on the boundary); 
\cite{MS19} investigates the Martin boundary of unbounded globally Lipschitz domains.

Here, we consider the discrete analogue of the spectral Dirichlet problem~\eqref{eq:Dirichlet}: 
\begin{equation}
\label{eq:approxDirichlet}
\begin{cases}
-\Delta^{(h)}  v^{(h)} = \mu^{(h)}\, v^{(h)} & \quad \text{ on } \Omega^{(h)} \,,\\ 
\qquad \ \ v^{(h)}=0 & \quad\text{ on } \partial \Omega^{(h)} \,,
\end{cases}
\end{equation}
with $v^{(h)}: \Omega^{(h)}\cup \partial \Omega^{(h)} \to \mathbb{R}$.
We denote by $(\mu_k^{(h)},\varphi_k^{(h)})_{k\geq 1}$ the ordered eigenvalues and $L^2$-normalized eigenfunctions of~\eqref{eq:approxDirichlet} (see Remark~\ref{rem:normalization} below), and it is here easy to see (by the Perron--Frobenius theorem) that the first eigenvalue $\mu_1^{(h)}$ is simple with an associated eigenfunction~$\varphi_1^{(h)}$ which is positive in $\Omega^{(h)}$.

Then the finite difference method shows that, for ``smooth'' domains~$\Omega$, for any $k\geq 1$, $\lim_{h\downarrow 0} \mu_k^{(h)} = \mu_k$ and $\lim_{h\downarrow 0}\varphi_k^{(h)} = \varphi_k$ in $L^2$ and in the sup norm, with explicit rates. We refer to Section~\ref{sec:convergence} below for a more detailed discussion.

\begin{remark}[About the normalization]
	\label{rem:normalization}
	In this paper, we consider the $L^2$-normalized eigenfunctions, \textit{i.e.}\ such that 
	\[
	\|\varphi_k\|_{L^2}^2 \defeq \int_\Omega \varphi_k(x)^2 \dd x =1
	\quad \text{ and } \quad
	\|\varphi_k^{(h)}\|_{L^2,h}^2 \defeq h^{d}\sum_{x \in \Omega^{(h)}} \varphi_k^{(h)}(x)^2 =1 \,.
	\]
	One could also study $L^1$-normalized eigenfunctions $\phi_k$, $\phi_k^{(h)}$,  \textit{i.e.}\ such that $\int_\Omega |\phi_k(x)| \dd x =1$ and $h^{d}\sum_{x \in \Omega^{(h)}} |\phi_k^{(h)}(x)| =1$, or some $L^{\infty}$ or point-normalization, setting for instance the value at~$0$ to be equal to~$1$.
	All these choices turn out to be equivalent in our context, so we focus on the $L^2$-normalized version of the eigenfunction, see Remark~\ref{sec:normalization} below.
\end{remark}

\paragraph{Our main results in a nutshell.}

In the rest of the paper, we focus on the discrete principal eigenvalue $\mu_1^{(h)}$ and eigenfunction $\varphi_1^{(h)}$, and on their continuous counterpart.
Our goal is to obtain, via probabilistic techniques, the following estimates.

\begin{theorem}
\label{th:main}
Suppose that $\Omega$ is a Lipschitz domain (see Assumption~\ref{hyp:cone} below for a precise definition).
Then, there exist some (explicit) $p\in (0,1]$ that depends on the regularity of \(\partial \Omega\) (e.g.\ \(p=1\) under the stronger Assumption~\ref{hyp:D}), and some constants $c,C>0$ such that, for all $k\geq 0$:
	\[
	\begin{split}
	\Big| \frac{\partial^k}{\partial x_{i_1} \cdots \partial x_{i_k}}\varphi_1 (x) \Big| \leq c\, (C k)^k d(x,\partial \Omega)^{p-k} \,,& \qquad \forall\, x\in \Omega \,,\\
	\Big| D_{i_1, \cdots i_k} \varphi_1^{(h)} (x) \Big| \leq c (C k)^k d(x,\partial \Omega)^{p-k} \,,& \qquad \forall\, x\in \Omega^{(h)} \text{ with } d(x,\partial \Omega) > 2k h \,,
	\end{split}
	\]
where $D_{i_1,\ldots, i_k}$ is the $k$-th order finite difference operator in direction $(i_1, \ldots, i_k)$, see its definition~\eqref{derivatives} below.
Note that for $k=0$, the results are simply about the first eigenfunctions~$\varphi_1,\varphi_1^{(h)}$ (by convention we set \(0^0 =1\)).
\end{theorem}

These derivative or difference estimates should be classical results, but we were not able to find a reference in the literature, especially in the case of Lipschitz domains.
Let us simply mention that a notable feature of the bounds is that they control the eigenfunction and their derivatives near the boundary $\partial \Omega$. 
Comparing the discrete and continuous estimates, loosely speaking the results can be interpreted as telling that~$\varphi_1^{(h)}$ has the same regularity as its continuous counterpart~$\varphi_1$, \textit{uniformly in $h\in(0,1)$}.
Again, this is a classical and extremely studied subject, and below we comment further the literature and on the approximation of the continuous principal eigenfunction by its discrete counterpart.


\subsection{Simple random walk and Brownian motion in a (large) bounded domain}
\label{sec:intro-rw}

Our proofs rely on coupling techniques, so let us introduce the probabilistic objects that we use, and their relation to the Dirichlet spectral problems~\eqref{eq:Dirichlet}-\eqref{eq:approxDirichlet}.

\paragraph*{Simple random walk in $\Omega_N$}

The discretized eigenvalue problem~\eqref{eq:approxDirichlet} appears naturally in the context of random walks on~$\mathbb{Z}^d$, as follows.
Let $N\geq 1$ be a large integer and consider the  domain $\Omega_N \subset \bbZ^d$ defined as the connected component of $0$ in $(N\Omega) \cap \mathbb{Z}^d$.
In the case where the boundary $\partial \Omega$ is $\mathcal{C}^1$, then one can simply define 
\[
\Omega_N =  (N\Omega) \cap \mathbb{Z}^d \,,
\]
at least for $N$ large enough.
We also let $\partial \Omega_N \defeq \{x \in \mathbb{Z}^d  \setminus \Omega_N ,\, \exists y \in \Omega_N \text{ with } y\sim x\}$.
One can simply interpret $\Omega_N$ as $h^{-1} \Omega^{(h)}$ with $h=\frac1N$, but in the context of random walks we prefer to work with $\Omega_N \subset \bbZ^d$ rather than~$\Omega^{(h)}$.

We consider $\ZZ^d$ endowed with the Euclidean norm $| \cdot |$ and we denote by $d(\cdot, \cdot)$ its graph distance (given by the $1$-norm $|\cdot|_1$);
we denote $y\sim x$ if $x,y$ are nearest-neighbors, \textit{i.e.}\ $d(x,y)=1$.
We also denote by $\Delta_d$ the graph Laplacian on $\mathbb{Z}^d$, \textit{i.e.}\ $\Delta_d v (x) = \sum_{y\sim x} (v(y)-v(x))$.
Consider the matrix $P_N$ of the (nearest-neighbor) simple random walk on $\mathbb{Z}^d$ killed upon exiting $\Omega_N$, namely 
\[
P_N(x,y) = \begin{cases}
 \frac{1}{2d} \quad  & \text{ for } x\sim y, \, x,y\in \Omega_N, \\
 0 &\text{ otherwise}.
\end{cases}
\]
We then focus on the principal eigenvalue $\lambda_N=\lambda_{\Omega_N}$ of $P_N$ and its associated $L^2$-normalized eigenvector $\phi_N = \phi_{\Omega_N}$ (which is positive on~$\Omega_N$),
\begin{equation}\label{eq:def-phiN-L2}
P_N \phi_N = \lambda_N \phi_N \,, \qquad \text{ with }\ \|\phi_N\|_{L^2, \frac{1}{N}}^2 \defeq \frac{1}{N^d} \sum_{x\in \Omega_N} \phi_N(x)^2=1 \,.
\end{equation}
By definition of $P_N$, one easily verifies that $I_d-P_N= -\frac{1}{2d} \Delta_d$, so $\lambda_N, \phi_N$ are related to~\eqref{eq:approxDirichlet} with $h\defeq\frac1N$ in the following way: $\lambda_N \defeq 1- \frac{1}{2d}\mu_1^{(h)}$ and $\phi_N(x) = \varphi_1^{(h)}(h x)$.

One of our goal is to study properties of $\phi_N$ which can prove useful when considering the random walk \emph{conditioned to remain forever inside $\Omega_N$}. 
Indeed, one can introduce the Doob's $\phi_N$-transform of the simple random walk, defined by the transition kernel:
\begin{equation}\label{eq:def-p_N}
	\tilde p_N(x,y) \defeq \lambda_N^{-1} \frac{\phi_N(y)}{\phi_N(x)}\; \frac{1}{2d} \ind_{x \sim y} \qquad \text{ for } x,y\in \Omega_N \,.
\end{equation}
By standard Markov chain theory (see e.g. \cite[App.~A.4.1]{lawlerRandomWalkModern2010}),  the transition kernel~\eqref{eq:def-p_N} is the limit, as $t\to +\infty$, of the transition kernels of the SRW conditioned to stay in $\Omega_N$ until time~$t$ (see also~\eqref{eq:conditioning} below).
We thus refer to the Markov process with transition kernel~\eqref{eq:def-p_N} as the \textit{random walk confined in $\Omega_N$}, see Section~\ref{sec:cond-rw} for more details.

\begin{remark}
Let us notice that~\eqref{eq:def-p_N} can also be interpreted as the transition kernel of a random walk among conductances $c_N(x,y) \defeq \phi_N(x)\phi_N(y) \ind_{\{x\sim y\}}$, \textit{i.e.}\ we can rewrite 
\[
\tilde p_N(x,y) = \frac{c_N(x,y)}{\sum_{z\sim x} c_N(x,z)} \qquad \text{ for } x,y \in \Omega_N \,.
\]
Note also that the stationary distribution of the random walk confined in $\Omega_N$ is given by $\pi_N(x) =\phi_N^2(x)$, the $L^2$-normalization of $\phi_N$ making $\pi_N$ a probability distribution.
We refer to~\cite[App. A.4.1]{lawlerRandomWalkModern2010} for some details.
\end{remark}

\paragraph*{Principal eigenfunction as quasi-stationary distribution (QSD)}

Let us note that, for any $x \in \Omega$ and any bounded measurable $f : \Omega \longrightarrow \RR_+$, we have 
\begin{equation}
	\label{eq:QSD}
	\lim_{t \to +\infty} \Ebf_x \big[ f(X_t) \, \big| \, H_{\partial \Omega_N} > t \big] = \sum_{x\in \Omega_N} f(x) \hat\phi_N(x) \dd x  \, ,
\end{equation}
where \( \hat\phi_N(x) = \phi_N(x)/ \sum_y \phi_N(y)\) is the \(L^1\)-normalized principal Dirichlet eigenfunction.
Thus, \(\hat \phi_N\) is the large-time distribution of the simple random walk conditioned to remain inside~\(\Omega_N\).
One additionally has that \(\hat \phi_N\) is invariant for the conditioned process, \textit{i.e.}\ \(\Pbf_{\hat\phi_N}(X_t \in \cdot  \mid H_{\partial} > t) = \hat\phi_N (\cdot)\), so that \(\hat \phi_N\) is the \textit{quasi-stationary distribution} (QSD) of the random walk killed upon exiting \(\Omega_N\).
We refer to~\cite{collet2013quasi} for an overview on~QSD and to \cite[Sec.~7]{diaconis2020analytic} for how they relate to the geometry of the state space.

\begin{remark}
	Most articles on quasi-stationary distributions seem focused either on proving the convergence~\eqref{eq:QSD} (see~\cite{champagnat2023general} for a recent overview), on providing quantitative estimates on the convergence~\eqref{eq:QSD} (see e.g.\ \cite{diaconis2015quantitative}) or effective algorithms to estimate~\(\hat\varphi_1\) (see e.g.\ \cite{benaim2021stochastic,benaim2015stochastic}).
	Let us mention that there are also some relations with ergodic control problems, which may give another characterization of the QSD (see~\cite{BDNW21}).
	However, to our knowledge, there is no (obvious) way to derive our estimates from the existing QSD literature.
\end{remark}

\paragraph*{Brownian motion in $\Omega$}

Analogously, one can also consider the Brownian motion killed upon exiting $\Omega$ and the Brownian motion \emph{conditioned to remain forever inside $\Omega$}, see~\cite{pinsky1985convergence} or \cite[Ch.~2 and Ch.~5]{chung2012brownian}. 
Let us briefly present its relation with~\eqref{eq:Dirichlet}.

A Brownian motion killed when exiting $\Omega$ is a Markov process with transition kernel $\rho_{t}^{\Omega}(x,A) = \bP_x( X_t \in A,H_{\partial\Omega} >t)$, where $(X_s)_{s\geq 0}$ is a standard Brownian motion (starting from~$x$ under~$\bP_x$) and $H_{\partial\Omega} = \inf\{s>0, X_s \in \partial \Omega\}$ is the exit time of~$\Omega$.
Its generator is the Dirichlet Laplacian on~$\Omega$, \textit{i.e.}\ with boundary condition considered in~\eqref{eq:Dirichlet}.
Analogously to~\eqref{eq:def-p_N}, the Brownian motion \emph{conditioned to remain forever inside $\Omega$} is the Doob's $\varphi_1$-transform of the killed Brownian motion, where $\varphi_1$ is the first eigenvalue of the Dirichlet Laplacian, see~\eqref{eq:Dirichlet}.
In other words, this is the Markov process with transition density 
\begin{equation}
	\label{def:tildeBM}
	\tilde \rho_t(x,y) = \frac{\varphi_1(y)}{\varphi_1(x)} e^{\mu_1 t} \rho_t^{\Omega}(x,y) \,,	
\end{equation}
where $\rho_t^{\Omega}(x,y)$ is the density transition of the killed Brownian motion.

Additionally, we also have the large-time asymptotics analogous to~\eqref{eq:QSD}.
In other words, the \(L^1\)-normalized principal Dirichlet eigenfunction \(\hat\varphi_1 = \varphi_1/\|\varphi_1\|_1\) is the asymptotic distribution of the Brownian motion conditioned to remain in~\(\Omega\); it is also the quasi-stationary distribution of the Brownian motion killed upon exiting~$\Omega$.

\section{Main results: regularity properties of the eigenfunctions}
\label{sec:results}

Our main results give some properties of the first $L^2$-normalized discrete eigenvector~$\phi_N$, under some mild condition on the domain~$\Omega$; they have corresponding results for the continuous eigenfunction $\varphi_1$.
We will work with two different assumptions on the regularity of the boundary of $\Omega$, which affects the regularity of eigenfunctions near~$\partial \Omega$.

\paragraph*{Main assumptions on the domain}

A first (stronger) assumption is a ``uniform exterior ball'' condition, also known as positive reach, which informally tells that one can roll a ball on the outer boundary of $\Omega$. 
This includes the case of $\mathcal{C}^2$ boundaries, but also allows for non-reentrant corners.

\begin{assumption}[Uniform exterior ball condition/positive reach]
\label{hyp:D}
There exists some $\gep>0$ such that for any $x\in \partial \Omega$ there is some $z$ with $d(z,x)=\gep$ such that $B(z,\gep) \cap \Omega =\emptyset$.
\end{assumption}

Our second, weaker, condition is a ``uniform exterior cone'' condition; this is a standard assumption in the literature, corresponding to a uniform version on Poincaré's cone property. For bounded domains, this is actually equivalent to considering a Lipschitz domain, see \cite[Thm.~1.2.2.2]{grisvard2011elliptic}, but we prefer the exterior cone formulation since it appears naturally in our proofs (it has a probabilistic interpretation, see for instance in~\cite{deblassie1987}).

Let us first  state what we mean by exterior cone. 
For $z \in \RR^d\setminus \{0\}$, we write $\theta(z)$ the angle between $z$ and $e_1 =(1,0, \dots , 0)$. 
A right cone of angle $\alpha > 0$ is the open connected subset given by $\mathcal{C}_\alpha \defeq \big\{ z \in \RR^d \, : \, 0\leq \theta(z) < \alpha \big\}$; more generally, a cone originating at $x \in \RR^d$ with direction $y \in \mathbb{S}^{d-1}$ is the set $\mathcal{C}_{\alpha,y}(x) \defeq x + R_y (\mathcal{C}_\alpha)$ with $R_y$ the rotation of angle $\theta(y)$.
Now, given some positive $r > 0$ and $x \in \partial \Omega$, we define $\alpha_r(x)$ largest angle of a cone $\mathcal{C}$ with vertex $x$ which remains exterior to $\Omega$ for a distance $r$, as follows:
\begin{equation}
	\label{def:alphax}
	\alpha_r(x) \defeq \sup \big\{ \alpha \, : \, \exists y \in \mathbb{S}^{d-1} \text{ such that } \mathcal{C}_{\alpha,y}(y) \cap B(x,r) \cap \Omega =\emptyset \big\} \, .
\end{equation}
We can now state our exterior cone condition.

\begin{assumption}[Uniform exterior cone condition]
\label{hyp:cone}
There is some radius~$r=r_\Omega > 0$ such that 
\begin{equation}
	\alpha = \inf_{x \in \partial \Omega} \alpha_r(x)  > 0 \, .
\end{equation}
In other word, there is an $\alpha =\alpha_{\Omega} > 0$ such that, for all $x \in \Omega$, there exists an exterior open cone $\mathcal{C}_{\alpha}(x)$ with vertex $x$ and angle $\alpha$ such that $\mathcal{C}_{\alpha}(x) \cap B(x,r) \cap \Omega = \emptyset$.
\end{assumption}

These two assumptions are related to the regularity properties of the eigenfunctions, and will in particular impact how easy it is for a random walk or a Brownian motion to avoid exiting the domain $\Omega_N$ when near the boundary.

\subsection{Behavior of the principal eigenfunction near the boundary}

Let us now give our first results, which control the eigenfunction near the boundary.
In particular, they prove the case \(k=0\) in Theorem~\ref{th:main}.

\paragraph*{Under the uniform exterior ball condition, Assumption~\ref{hyp:D}}

Let us mention that our proof relies on simple gambler's ruin estimates (\textit{i.e.}\ probability to avoid a ball).

\begin{proposition}
\label{prop:unifbound}
Suppose that $\Omega$ satisfies Assumption~\ref{hyp:D}.
Then there is a constant $C>0$ such that:
\begin{enumerate}[label=(\roman*)]
	\item In the discrete setting: \(\displaystyle 0< \phi_N(x) \leq   C  \, \frac{d(x,\partial \Omega_N)}{N}\) for all \(x\in \Omega_N\).

	\item In the continuous setting: \(\displaystyle 0< \varphi_1(x) \leq   C  \, d(x,\partial \Omega)\) for all \(x\in \Omega\).
\end{enumerate}
In particular, we have $\sup_{x \in \Omega_N}  |\phi_N(x)| , \sup_{x \in \Omega}  |\varphi_1(x)|\leq C\, \mathrm{diam}(\Omega)$.
\end{proposition}

Let us stress that it is known that for smooth boundary \(\partial\Omega\), the continuous principal eigenfunction \(\varphi_1\) grows linearly with the distance, \textit{i.e.}\ \( \varphi_1(x) \geq c \, d(x,\partial \Omega)\), see for instance \cite[Thm.~7.1]{davies1984ultracontractivity}.
However, the principal eigenfunction may vanish faster near corners: for instance, for a \(d\)-dimensional hypercube, the eigenfunctions are simply products of one-dimensional eigenfunctions and therefore vanish as \(d(x,\partial \Omega)^d\) near corners.
We refer to \cite{grisvard2011elliptic} for an overview of the extensive literature estimating the boundary behavior in polygonal (or more general) domains. 
In particular, the behavior of \(\varphi_1(x)\) for small \(d(x,\partial \Omega)\) depends on the shape of the boundary near \(x\), but Proposition~\ref{prop:unifbound} somehow provides the best \textit{uniform} estimate one could hope for.

\begin{remark}
\label{sec:normalization}
Proposition~\ref{prop:unifbound} actually shows that the sup-norm of $\phi_N$ is controlled by its $L^2$-norm (equal to $1$), uniformly in $N \geq 1$.
Since the $L^2$-norm is obviously controlled by the sup-norm, we obtain that if $\Phi_N$ is the $L^{\infty}$-normalized principal eigenfunction, then we have that $\Phi_N = C_N \phi_N$ with a constant $C_N$ uniformly bounded away from $0$ and $\infty$.
Similar comparisons hold for the~$L^1$ norm.
\end{remark}

\paragraph*{Under the exterior cone condition, Assumption~\ref{hyp:cone}}

When using the exterior cone condition of Assumption~\ref{hyp:cone} rather than Assumption~\ref{hyp:D}, the main difference lies in the use of gambler's ruin estimates.
One needs to use estimates that a random walk or a Brownian motion avoids a cone of angle $\alpha$ rather than a ball; for this, \cite{banuelos1997brownian,deblassie1987} and \cite{DW15} turn out to be crucial.
Let us state the analogous of Proposition~\ref{prop:unifbound} in this context.

\begin{proposition}
\label{prop:unifcone}
Suppose that $\Omega$ satisfies Assumption~\ref{hyp:cone}.
Then there is a constant $C>0$ and some $p= p(\alpha) \in (0,1]$ defined in~\eqref{def:p} (or~\eqref{def:p2}) below, such that:
\begin{enumerate}[label=(\roman*)]
	\item In the discrete setting: \(\displaystyle 0< \phi_N(x) \leq   C  \, \Big( \frac{d(x,\partial \Omega_N)}{N} \Big)^{p}\) for all \(x\in \Omega_N\).
	
	\item In the continuous setting: \(\displaystyle 0< \varphi_1(x) \leq   C  \,  d(x,\partial \Omega)^{p}\) for all \(x\in \Omega\).
\end{enumerate}
In particular, we also have $\sup_{x \in \Omega_N}  |\phi_N(x)| , \sup_{x \in \Omega}  |\varphi_1(x)|\leq C\, \mathrm{diam}(\Omega)$.
\end{proposition}

The exponent $p = p(\alpha)$ depends on the minimal angle of the exterior cone in Assumption~\ref{hyp:cone}, and is related to the probability for a Brownian motion or a random walk to avoid a cone $\mathcal{C}_{\alpha}$ with opening angle $\alpha$ (note that this amounts to staying in a cone of opening angle $\theta= \pi-\alpha$); we refer to~\cite{banuelos1997brownian} for the Brownian motion case (see also~\cite{burkholder1977exit,deblassie1987}) and~\cite{DW15} for the random walk case.
The value of $p$ is in fact explicit for a generic cone $K$, see~\cite{banuelos1997brownian}: consider the Laplace--Beltrami operator on $\mathbb{S}^{d-1}$ and let $\lambda_{K^c}$ be its first Dirichlet eigenvalue in $K^c \cap \mathbb{S}^{d-1}$, then we have 
\begin{equation}
	\label{def:p}
	p= \sqrt{\lambda_{K^c} + \big(\tfrac d2 -1\big)^2} - \big(\tfrac d2 -1\big) \,.
\end{equation}
 
In the case of circular cones, there exist an alternative ``explicit'' expression for~$p$, see~\cite[p.192-193]{burkholder1977exit} or~\cite[\S2]{deblassie1987}.
Let $F(a,b;c;z) = \sum_{n=0}^{\infty} \frac{(a)_n (b)_n}{(c)_n} \frac{z^n}{n!}$ be the ordinary hypergeometric function, with the standard notation $(x)_n = x(x+1)\cdots (x+n)$ for the rising Pochhammer symbol, and define 
\[
h(p,\alpha) = F\big(-p,p+d-2;\tfrac12 (d-1); \cos^2(\tfrac{\alpha}{2}) \big) \,.
\] 
(In comparison to~\cite{burkholder1977exit,deblassie1987} we have set $\alpha =\pi-\theta$.)
Then, $p(\alpha)$ is the smallest positive zero of the function\footnote{In~\cite{burkholder1977exit,deblassie1987}, it is defined as the inverse map of $\alpha(p) = \sup\{ \alpha \in (0,\pi), h(p,\alpha)=0\}$, which is equivalent.} $p\mapsto h(p,\alpha)$, \textit{i.e.}
\begin{equation}
	\label{def:p2}
	p(\alpha) = \inf\{ p>0 , h(p,\alpha) =0 \} \,.
\end{equation}
In dimension $d=2$ we have the explicit expression $p(\alpha) = \frac{\pi}{2 (\pi-\alpha)}$; on the other hand, in dimension $d\geq 3$ we simply know that $\alpha \mapsto p(\alpha)$ is continuous and strictly increasing from $(0,\pi)$ onto $(0,+\infty)$, with $p(\pi/2) =1$.

\begin{remark}	
	Let us highlight the work \cite{vandenbergEstimatesDirichletEigenfunctions1999} where the authors obtain, in dimension $d=2$, the universal bound $|\varphi_1(x)| \leq c\, d(x,\partial \Omega)^{1/2}$ with a very mild condition on $\Omega$ (of uniform capacity density). 
	Our estimates actually improve this result under a stronger condition on \(\partial \Omega\).
	Indeed, under the uniform exterior cone Assumption~\ref{hyp:cone} with some angle~$\alpha$, we obtain that $|\varphi_1(x)| \leq c\, d(x,\partial \Omega)^{\frac12 + \kappa_\alpha}$ with $\kappa_\alpha = p(\alpha)-\frac12 = \tfrac{\alpha}{2(\pi-\alpha)}$ (note also that $\kappa_\alpha \to 0$ as $\alpha \to 0$).
\end{remark}

\begin{remark}
	\label{rem:px}
	We believe that the bounds in Proposition~\ref{prop:unifcone} are somehow the best \textit{uniform} estimate on \(\phi_N\) or \(\varphi_1\) one could hope for.
	Of course, as mentioned above, the eigenfunction will vanish faster near~\(\partial \Omega\) if the boundary is locally flat (or even has an outgoing corner).
	We could in fact improve Proposition~\ref{prop:unifcone} by letting the exponent $p$ in Proposition~\ref{prop:unifcone} depend on $x$.
	For $x\in \Omega_N$, we let $\partial \Omega_x = \{y\in \partial \Omega, d(x,y)=d(x,\partial \Omega)\}$ be the set of projected points of $x$ to $\partial \Omega$, and $\alpha_x = \inf_{y\in \partial \Omega_x} \alpha_r(y)$ be the least opening angle of cones associated to $\partial \Omega_x$ (note that this allows for outgoing corners if \(\alpha >\pi\)).
	Then, we could improve Proposition~\ref{prop:unifcone} as follows: for all \(x\in \Omega_N\)
	\begin{equation}
		\label{eq:bound-at-x}
		|\phi_N(x)| \leq   C  \, \Big( \frac{d(x,\partial \Omega_N)}{N} \Big)^{p_x}  \qquad \text{ with } p_x = p(\alpha_x) \,,
	\end{equation}
	where \(p(\alpha)\) is defined in~\eqref{def:p}-\eqref{def:p2} (and \(p(\alpha) >1\) for outgoing corners).
	(A similar statement would hold for \(\varphi_1\).)
	We have decided not to pursue this further for the simplicity of presentation.
\end{remark}

\subsection{Difference and higher-order difference estimates}
\label{sec:difference-estimates}

Let us now state our main results, that shows that $\phi_N$ varies regularly inside $\Omega_N$; we turn to the continuous setting afterwards.
For pedagogical purposes, we first give a control on differences of~$\phi_N$, and turn afterwards to higher-order differences (we need to introduce further notation); our proof follows the same pedagogical scheme, starting with a first-order difference estimate where the mirror coupling is simple.

\begin{theorem}[Differences of \(\phi_N\)]
\label{thm:reg-cone}
There is a constant $C>0$ such that, for any $z\in \Omega_N$ and any \(R \leq \frac12 d(z,\partial \Omega_N)\), we have
\begin{equation}
\label{eq:regcone}
\big| \phi_N(x)-\phi_N(y) \big| \leq  C \, \frac{d(x,y)}{R} \, \, \sup_{w\in B(z,R)} |\phi_N(w)|\,, \qquad  \forall\, x,y \in B\big(z,R) \,.
\end{equation}
\end{theorem}

As a consequence of Propositions~\ref{prop:unifbound} and~\ref{prop:unifcone}, choosing \(R = \frac12 d(z,\partial \Omega_N)\) in the above so that \(d(w,\partial\Omega_N) \leq 2 d(z,\partial \Omega_N)\) uniformly for \(w\in B(z,R)\), we get the following corollary.

\begin{corollary}
There is a constant \(C'\) such that, for any $z\in \Omega_N$, we have that
\begin{equation*}
\big| \phi_N(x)-\phi_N(y) \big| \leq C' \, \frac{d(x,y)}{N} \,  \Big( \frac{d(z,\partial \Omega_N)}{N}\Big)^{p-1} \,, \qquad  \forall\, x,y \in B\big(z,\tfrac12 d(z,\partial \Omega_N) \big)\,,
\end{equation*}
with \(p=1\) under Assumption~\ref{hyp:D} and $p \in (0,1]$ from~\eqref{def:p} (or~\eqref{def:p2}) under Assumption~\ref{hyp:cone}.	
\end{corollary}

Let us now define the $k$-th order difference of a function $\psi: \mathbb{Z}^d \to \mathbb{R}$.
For $i \in \{1,\ldots, d\}$, let~$D_i$ be the difference operator in direction $i$, defined as
\begin{equation}
	\label{def:Di}
	D_{i} \psi(x) = \frac{1}{2} \big( \psi(x+e_i)-\psi(x-e_i) \big) \,,
\end{equation}
with $e_i$ the $i$-th vector of the canonical basis.
(For instance, note that Theorem~\ref{thm:reg-cone} shows that $|D_i\phi_N (x)|\leq C/N$ uniformly in $x\in D_N$.)
Then, for $k\geq 2$, we define the $k$-th order difference
in directions $i_1,\ldots, i_k \in \{1,\ldots, d\}$ by setting 
\[
D_{i_1,\ldots, i_k}  =  D_{i_1} \cdots D_{i_k} \,.
\]
In fact, a simple iterative argument shows that, for any $i_1,\ldots, i_k \in \{1,\ldots, d\}$,
\begin{equation}
	\label{derivatives}
	D_{i_1,\ldots, i_k} \psi(x) = D_{i_1} \cdots D_{i_k} \psi(x) =  \frac{1}{2^k} \sum_{ \alpha \in  \{+1,-1\}^k} \sign(\alpha) \;  \psi\Big( x+ \sum_{j=1}^k \alpha_j e_{i_j} \Big) \,,
\end{equation}
where $\sign(\alpha) = (-1)^{m}$ with $m$ the number of ``$-1$'' in $\alpha$.

\begin{remark}
Let us stress that the $k$-th order derivatives are finite-difference schemes for the partial derivatives.
Indeed, for a function $\psi: \mathbb{R}^{d} \to \mathbb{R}$ and $h>0$, we can define 
\begin{equation}
	\label{def:Dh}
	D^{(h)}_{i_1,\ldots, i_k} \psi \defeq D_{i_1,\ldots, i_k} \psi_h \qquad
	 \text{ with }\ \psi_h : \Big| \begin{array}{cll} \bbZ^d & \to \bbR^d  \\ x &\mapsto h^{-1} \psi( hx )\end{array}  \,.
\end{equation}
Then, if $\psi$ is $k$ times differentiable at $x$, we have that $\frac{\partial^k}{\partial x_{i_1} \cdots \partial x_{i_k}} \psi (x) = \lim_{h\downarrow 0} D^{(h)}_{i_1,\ldots,i_k} \psi (x)$.
\end{remark}

We then have the following result on the higher-order differences of $\phi_N$.

\begin{theorem}[\(k\)-th order differences]
\label{thm:higherorder-cone}
There exists a constant $C>0$ such that, for any $k\geq 1$ and any $i_1,\ldots, i_k \in \{1,\ldots, d\}$, for any $x\in \Omega_N$ with $d(x,\partial \Omega_N)\geq 4k$ and any \( 2k \leq R \leq \frac12 d(x,\partial \Omega_N) \) we have
\begin{equation}
\label{eq:regcone2}
\big| D_{i_1,\ldots, i_k} \phi_N (x) \big| \leq  \Big( \frac{C k}{R} \Big)^{k}\, \sup_{w\in B(x,R)} |\phi(w)| \,.
\end{equation}
\end{theorem}

As a consequence of Propositions~\ref{prop:unifbound} and~\ref{prop:unifcone}, choosing again \(R = \frac12 d(x,\partial \Omega_N)\), we get the following corollary, which corresponds to the discrete part of Theorem~\ref{th:main}; note also that the case \(k=1\) recovers Theorem~\ref{thm:reg-cone}.

\begin{corollary}
	\label{cor:derivative}
There is a constant \(C'\) such that, for any $x\in \Omega_N$, we have that
\begin{equation*}
\big| D_{i_1,\ldots, i_k} \phi_N (x) \big| \leq \frac{(C' k)^k}{N^k} \, \Big( \frac{d(x,\partial \Omega_N)}{N} \Big)^{p-k} \,,
\end{equation*}
with \(p=1\) under Assumption~\ref{hyp:D} and $p \in (0,1]$ from~\eqref{def:p} (or~\eqref{def:p2}) under Assumption~\ref{hyp:cone}.
\end{corollary}

Let us note that the formulation~\eqref{eq:regcone} allows for some flexibility: in particular, if one has a better control on \(|\phi_N(z)|\) near the boundary (for instance if there is some corner), then the bound~\eqref{eq:regcone} improves.
In view of Remark~\ref{rem:px} (and in particular~\eqref{eq:bound-at-x}), one could indeed obtain the following: for any \(\gep \in (0,\frac12)\) choosing \(R = R_{\gep,x}= \gep\, d(x,\partial\Omega)\) in Theorem~\ref{thm:higherorder-cone}, we get that
\begin{equation}
	\label{eq:local-bound}
\big| D_{i_1,\ldots, i_k} \phi_N (x) \big| \leq \frac{(C' \gep^{-1} k)^k}{N^k} \, \Big( \frac{d(x,\partial \Omega_N)}{N} \Big)^{p_x^{(\gep)}- k} \,,
\qquad  p_{x}^{(\gep)} \defeq \inf_{w\in B(x,R_{\gep,x})} p_w\,,
\end{equation}
with the same notation as in Remark~\ref{rem:px}.
In particular, we could in theory make \(p_{x}^{(\gep)}\) arbitrarily close to \(p_x\) by choosing \(\gep\) small, at the cost of degrading the prefactor \(\gep^{-k}\).

\begin{remark}
	\label{rem:directional1} 
	We could also treat \emph{directional} $k$-th order differences, defining $D_{i^+}$, $D_{i^-}$ by  $D_{i^+} \psi (x) = \psi(x+e_i)-\psi(x)$ and $D_{i^-} \psi (x) = \psi(x)-\psi(x-e_i)$, and then $D_{i_1^{\gep_1},\ldots, i_k^{\gep_k}} = D_{i_1^{\gep_1}} \cdots D_{i_k^{\gep_k}}$ for $i_1,\ldots, i_k \in \{1,\ldots, d\}$ and $\gep_1,\ldots, \gep_k \in \{+1,-1\}$.
	For instance, we can write the graph Laplacian as $\Delta_d = \sum_{i=1}^d D_{i^+,i^-}$ 
	We chose to focus on the $k$-th differences defined in~\eqref{derivatives} for simplicity, and refer to Remark~\ref{rem:directional} below for a discussion on how to adapt the proofs (see in particular~\eqref{eq:directionaldiff}).
\end{remark}

We also prove the same results for the corresponding continuous first eigenfunction~$\varphi_1$. 
This does not seem completely standard for Lipschitz domains (we were not able to find a reference), but it derives easily from our coupling techniques.

\begin{theorem}[\(k\)-th order derivatives]
	\label{th:reg-cont-cone}
	There is a constant $C>0$ such that, for all $k\geq 1$ and any $i_1,\ldots, i_k \in \{1,\ldots, d\}^k$, for any $x\in \Omega$ with $d(x,\partial \Omega) \geq 4kh$ and any \(r \in (2kh, \frac12 d(x,\partial \Omega))\)
	\[
	\big| D^{(h)}_{i_1,\ldots, i_k} \varphi_1(x) \big| \leq  \Big(\frac{C k h}{r}\Big)^{k}  \sup_{w \in B(x,r)} |\varphi_1(w)|\,.
	\]
\end{theorem}

Note that, letting $h\downarrow 0$ then shows that for any $x\in \Omega$ and any \(r \in (0, \frac12 d(x,\partial \Omega))\), 
\[
	\Big| \frac{\partial^k}{\partial x_{i_1} \cdots \partial x_{i_k}}\varphi_1 (x) \Big| \leq \Big(\frac{C k}{r}\Big)^{k}  \sup_{w \in B(x,r)} |\varphi_1(w)| \,.
\]
As above, this upper bound depends on \(|\varphi_1(w)|\) in a small ball around \(x\), but a consequence of Propositions~\ref{prop:unifbound} and~\ref{prop:unifcone} is the following corollary.
It corresponds to the continuous part of Theorem~\ref{th:main}; note also that the case \(k=1\) recovers Theorem~\ref{thm:reg-cone}.

\begin{corollary}
	\label{cor:derivatives}
	There is a constant $C'$ such that, for all $k\geq 1$ and any $i_1,\ldots, i_k \in \{1,\ldots, d\}^k$, for all \(x\in \Omega\),
	\[
		\Big| \frac{\partial^k}{\partial x_{i_1} \cdots \partial x_{i_k}}\varphi_1 (x) \Big| \leq (C'k)^k d(x,\partial \Omega)^{p-k} \,,
	\]
	with \(p=1\) under Assumption~\ref{hyp:D} and $p \in (0,1]$ from~\eqref{def:p} (or~\eqref{def:p2}) under Assumption~\ref{hyp:cone}.
\end{corollary}

To conclude this section, let us mention that one could obtain some upper bound on the \(k\)-th order derivative that depends on the point \(x\in \Omega\), in the spirit of~\eqref{eq:local-bound}.

\subsection{Convergence of eigenfunctions \texorpdfstring{($L^2$ and $L^{\infty}$)}{} and some consequences}
\label{sec:convergence}

Recall that $(\mu_1,\varphi_1)$ are the principal eigenvalue and $L^2$-normalized eigenfunction of the Dirichlet Laplacian on~$\Omega$ (see~\eqref{eq:Dirichlet}). 
One can then consider the discretized function~$\varphi_N$ defined on $\Omega_N$ by
\[
\varphi_N : x\in \Omega_N \mapsto \varphi_1 \Big(\frac xN \Big)\,.
\]
This section is devoted to estimates on how close $\phi_N$ is to $\varphi_N$ in $L^2(\Omega_N)$ and $L^{\infty}(\Omega_N)$.
This is a classical topic, but we review here some of the literature and derive some of the following results from our difference estimates.

\smallskip

Before we turn to our proofs, let us note that we were not able to find a proper reference showing that having a Lipschitz boundary $\partial\Omega$ (\textit{i.e.}\ Assumption~\ref{hyp:cone}) is sufficient to ensure the $L^2$ or $L^{\infty}$ convergence of the principal discrete eigenfunction to its continuous counterpart.
The best result we found was that of~\cite[Cor.~7.1]{bramble1968effects} (see also~\cite{kuttler1970finite}), which show the convergence
\[
\lim_{N\to\infty} \frac{1}{N^d} \sum_{x\in \Omega_N} \big( \phi_N(x)-\varphi_N(x)\big)^2 = 0 \,.
\]
under the condition that the domain $\Omega$ has a smooth ($\mathcal{C}^2$) boundary, together with a rate of convergence.
Let us mention that~\cite{bramble1968effects} considers a finite-difference operator which is adjusted near the boundary, to obtain a $O(h^2)$ error term.
However, as observed in~\cite[Lemma~2.1]{bolthausen1994localization} and the discussion that follows, if one adapts the techniques of~\cite{bramble1968effects} to our setting, one obtains a $O(h)$ error.

In fact,~\cite[Thm.~6.1 and Thm.~7.2]{bramble1968effects} states a similar convergence result in dimension~$d=2$ if~$\Omega$ possesses reentrant corners (with a slower convergence rate).
We explain below in Appendix~\ref{app:Bramble} that the proof carries over to dimension $d\geq 2$, under the uniform exterior cone condition of Assumption~\ref{hyp:cone}.
Indeed, gambler's ruin estimates and bounds on $|D \varphi(x) |$ near the reentrant corners are central to \cite[Thm.~6.1]{bramble1968effects}, and the present paper provides all the necessary estimates to adapt it.

\begin{theorem}[$L^2$ convergence]
	\label{th:L2convergence}
	If the domain has a Lipschitz boundary, \textit{i.e.}\ if Assumption~\ref{hyp:cone} holds, then we have the~$L^2$ convergence for the principal eigenfunction: there is a constant $\kappa>0$ such that
	\begin{equation*}
		\frac{1}{N^d} \sum_{x\in \Omega_N}  \big(\phi_N(x)-\varphi_N(x)\big)^2 \leq \kappa N^{-p} \,,
	\end{equation*}
	where $p\in (0,1]$ is the exponent from~\eqref{def:p} (or~\eqref{def:p2}); $p=1$ if Assumption~\ref{hyp:D} holds.
\end{theorem}

Bramble and Hubbard also show the sup-norm convergence, and their method can also be adapted: we state the following result.

\begin{theorem}[$L^{\infty}$ convergence]
	\label{th:sup-convergence}
	Suppose that $\Omega$ satisfies Assumption~\ref{hyp:cone}, then
	\begin{equation*}
		\lim_{N\to\infty} \sup_{x \in \Omega_N} \big| \phi_N(x) - \varphi_N(x) \big| = 0 \, .
	\end{equation*}
	Moreover, if Assumption~\ref{hyp:D} holds, the convergence is at rate $N^{-1}$.
\end{theorem}

\begin{remark}
Of course these results have their counterparts in the notation of the discrete Dirichlet problem~\eqref{eq:approxDirichlet}, considering the convergence of $\varphi_1^{(h)}$ to $\varphi_1$ in $L^2(\Omega^{(h)})$ and $L^{\infty}(\Omega^{(h)})$, with $h\defeq \frac1N$.
Theorems~\ref{th:L2convergence} and~\ref{th:sup-convergence} translate into
\[
	h^d \sum_{x\in \Omega^{(h)}} \big| \varphi_1(x) -\varphi_1^{(h)}(x) \big|^2 \leq \kappa h^{p}
\qquad
\text{and}
\qquad
	\lim_{h \to 0} \sup_{x \in \Omega^{(h)}} \big| \varphi^{(h)}_1(x) - \varphi_1(x) \big| = 0 \, .
\]
\end{remark}

We refer to Appendix~\ref{app:Bramble} for a detailed discussion on the proof of~\cite{bramble1968effects}: we provide a summary of the method and how our results of Section~\ref{sec:results} can be used to obtain Theorems~\ref{th:L2convergence} and~\ref{th:sup-convergence}.
Let us mention that rate of convergence for the $L^{\infty}$ convergence in a Lipschitz domain is intricate, so we have preferred to stick to a simpler statement (again, see Appendix~\ref{app:Bramble} details).

\subsection{About the ratios of eigenfunctions}

One application of interest of Theorem~\ref{thm:reg-cone} is that it allows us to control the ratios of~$\phi_N$'s (resp.\ of $\varphi_1$'s).
Note that these ratios appear when considering the transition kernel of the confined random walk (resp.\ of the confined Brownian motion), see~\eqref{eq:def-p_N}; in particular, the ratios give the drift felt by the confined random walk. 

\begin{proposition}[Ratios of eigenfunctions]
	\label{prop:ratio-phiN}~
	\begin{enumerate}[label=(\roman*)]
		\item In the discrete setting: there is a constant $C > 0$ such that for any $z \in \Omega_N$ and any $R \leq \tfrac12 d(z,\partial \Omega_N)$, we have
		\begin{equation}
			\Big| \frac{\phi_N(x)}{\phi_N(y)} - 1 \Big| \leq C \frac{d(x,y)}{R} \frac{\sup_{w \in B(z,R)} \phi_N(w)}{\inf_{w \in B(z,R)} \phi_N(w)} \, , \quad \forall x,y \in B(z,R) \, .
		\end{equation}
		\item In the continuous setting: there is a constant $C' > 0$ such that for any $z \in \Omega$ and any $R \leq \tfrac12 d(z,\partial \Omega)$, we have
		\begin{equation}
			\label{eq:bound-ratio}
			\Big| \frac{\varphi_1(x)}{\varphi_1(y)} - 1 \Big| \leq C' \frac{d(x,y)}{R} \frac{\sup_{w \in B(z,R)} \varphi_1(w)}{\inf_{w \in B(z,R)} \varphi_1(w)} \, , \quad \forall x,y \in B(z,R) \, .
		\end{equation}
	\end{enumerate}
\end{proposition}

\begin{proof}
	We use Theorem \ref{thm:reg-cone} to get
	\begin{equation*}
		\Big| \frac{\phi_N(x)}{\phi_N(y)} - 1 \Big| \leq \frac{\big|\phi_N(x) - \phi_N(y) \big|}{\phi_N(y)} \leq \frac{1}{\phi_N(y)} \frac{d(x,y)}{R} \sup_{w \in B(z,R)} |\phi_N(w)| \, .
	\end{equation*}
	and take the infimum in the denominator. 
	The proof is identical in the continuous setting by considering Theorem~\ref{th:reg-cont-cone} instead.
\end{proof}

\begin{remark}[The case of a smooth boundary]
Let us highlight here the case of a smooth domain \(\Omega\) (say with a \(\mathcal{C}^{\infty}\) boundary). 
In view of Proposition~\ref{prop:unifbound} and its following comment, we have two constants \(c,C\) such that $c\, d(x,\partial \Omega) \leq \varphi_1(x) \leq C\, d(x,\partial \Omega)$ for all $x \in \Omega$. 
Therefore, if we fix $z \in \Omega$ and $R \leq \tfrac12 d(z,\partial \Omega)$, Proposition~\ref{prop:ratio-phiN} yields that \(|\frac{\varphi_1(x)}{\varphi_1(y)} - 1|\leq C''\,  \frac{d(x,y)}{R}\) uniformly for \(x,y \in B(z,R)\), since the ratio on the right-hand-side in~\eqref{eq:bound-ratio} is bounded.
This can actually be written in the following compact form:
\[
\Big| \frac{\varphi_1(x)}{\varphi_1(y)} - 1 \Big| \leq \hat C''\,  \frac{d(x,y)}{d(x,\partial \Omega) \wedge d(y,\partial \Omega)} \,.
\]
We stress that this shows that the ratios \(\varphi_1(x)/\varphi_1(y)\) are bounded from above by \(\frac32\) and from below by \(\frac12\) as soon that \(x,y\) are (much) closer between them than from the boundary.
Similar results hold in the discrete case.	
\end{remark}

\paragraph*{Some consequences in the bulk}

Theorems~\ref{th:L2convergence}-\ref{th:sup-convergence}, together with Proposition~\ref{prop:ratio-phiN}, allow us to derive a few corollaries of the properties of $\phi_N$ in the so-called \textit{bulk} of $\Omega_N$.
For any $\eta > 0$, define 
\[
\Omega_N^{\eta}\defeq \{x\in \Omega_N, d(x,\partial \Omega_N)  > \eta N\} \,,
\]
that we refer to as the bulk of $\Omega_N$.

\begin{corollary}\label{cor:cv-phiN-bulk-ratio}
	Assume that $\Omega$ satisfies the exterior cone condition of Assumption~\ref{hyp:cone}. 
	Then, for any $\eta>0$ we have
	\begin{equation}
		\sup_{x\in \Omega_N^{\eta}} \bigg| \frac{\phi_N(x)}{\varphi_N(x)} -1 \bigg| \xrightarrow{N\to\infty} 0\,.
	\end{equation}
\end{corollary}

\begin{proof}
Inside $\Omega$, since the first eigenfunction~$\varphi_1$ is continuous and positive, we get that for any $\eta>0$, there is some $\kappa_{\eta}>0$ such that $\varphi_1(x) \geq \kappa_\eta$ uniformly for $x\in \Omega$ with $d(x,\partial \Omega)>\eta$.
Then, from Theorem~\ref{th:sup-convergence}, we directly have that for any $x\in \Omega_N^{\eta}$,
\[
 \sup_{x\in \Omega_N^{\eta}} \bigg| \frac{\phi_N(x)}{\varphi_N(x) } -1 \bigg| \leq \frac{c}{\kappa_{\eta}}\sup_{x \in \Omega_N} \big| \phi_N(x) - \varphi_N(x) \big| \xrightarrow{N\to\infty} 0 \,,
\]
as desired.
\end{proof}

Another corollary is that, in the bulk, $\phi_N$ is bounded away from $0$ provided $N$ large enough. 
This is an immediate consequence of the positivity of $\varphi$ in the bulk of $\Omega$ (we have $\inf_{x\in \Omega_N^{\eta}} \varphi_N(x) \geq \kappa_{\eta}$ for some $\kappa_{\eta}>0$ independent of $N$) and Corollary \ref{cor:cv-phiN-bulk-ratio}. 

\begin{corollary}\label{cor:phiN-bounds}
	Assume that $\Omega$ satisfies Assumption~\ref{hyp:cone}. Then, for any $\eta > 0$, there is a constant $c_\eta > 0$ such that for all $N$ marge enough,
	\begin{equation}
		c_\eta \leq \inf_{x\in \Omega_N^{\eta}} \phi_N(x) \leq \sup_{x\in \Omega_N^{\eta}} \phi_N(x) \leq \frac{1}{c_\eta} \, .
	\end{equation}
	In particular, in the bulk~$\Omega_N^{\eta}$, the ratios $\phi_N(x)/\phi_N(y)$ are uniformly bounded away from $0$ and~$\infty$.
\end{corollary}

Let us also highlight that we can in fact control the ratios \(\phi_N(x)/\phi_N(y)\) when \(x,y\) are close to each other.
Indeed, Proposition~\ref{prop:ratio-phiN} and Corollary~\ref{cor:phiN-bounds} imply that, uniformly for $x,y\in \Omega_N^{\eta}$ with $x \sim y$, we have
\[
 1- \frac{C_{\eta}}{\kappa_{\eta}} N^{-1} \leq \frac{\phi_N(x)}{\phi_N(y)}  \leq  1+ \frac{C_{\eta}}{\kappa_{\eta}} N^{-1} \,.
\] 
Bounding \(1- \frac{C_{\eta}}{\kappa_{\eta}} N^{-1} \geq e^{-c_{\eta} N^{-1}}\) and \(1+ \frac{C_{\eta}}{\kappa_{\eta}} N^{-1} \leq e^{c_{\eta} N^{-1}}\), we get the following corollary, thanks to a telescopic product.

\begin{corollary}
	Assume that $\Omega$ satisfies Assumption~\ref{hyp:cone}. Then, for any $\eta > 0$, there is a constant \(c_{\eta}>0\) such that, for all \(N\) large enough,
	\[
	\forall \, x,y\in \Omega_N^{\eta} \qquad \exp\Big( - c_{\eta} \frac{d(x,y)}{N} \Big) \leq \frac{\phi_N(x)}{\phi_N(y)} \leq \exp\Big( c_{\eta} \frac{d(x,y)}{N} \Big) \, .
	\]
\end{corollary}

\section{Some comments about the proof and the literature}

Let us now comment on one of the key ingredient that we use, namely so-called Feynman--Kac relations, that allows us to develop coupling techniques.
We discuss further the literature afterwards.

\subsection{Feynman--Kac relations}
\label{sec:Feynman}

As far as the simple random walk is concerned, let us denote \(H_{\partial \Omega_n} \defeq \inf\{ n\geq 0, X_n \in \partial \Omega_N\}\) the exit time of \(\Omega_N\), and let $\tilde \bP_x^N$ be the law of the \textit{confined} random walk when started from~$x$, \textit{i.e.}\ the Markov chain on $\Omega_N$ with transition probabilities $\tilde p_N(x,y)$ from~\eqref{eq:def-p_N}.
Then, using the form~\eqref{eq:def-p_N}, after telescoping the ratios of the $\phi_N$'s, we obtain that for any stopping time~\(\tau\) and any event \(A\in \mathcal{F}_{\tau}\),
\begin{equation}
	\label{eq:PtildeP}
	\tilde \bP_x^N(A) = \frac{1}{\phi_N(x)} \bE_x\Big[ (\lambda_N)^{-\tau} \phi_N(X_\tau) \ind_{  \{ \tau < H_{\partial \Omega_N} \}}  \, \ind_{A} \Big] \,.
\end{equation}
This is in fact a Feynman--Kac type relation for the first eigenfunction, see e.g.\ \cite[\S6.3]{dynkin2002diffusions}, which expresses the law of the confined random walk in terms of a \(\phi_N\)-transform of the original random walk.

A similar Feynman--Kac relation holds for the Brownian motion conditioned to stay forever in~$\Omega$.
Let $\tilde \bP_x$ denote the law of a Brownian motion $(X_s)_{s\geq 0}$ starting from $x$ and conditioned to remain forever in~$\Omega$. 
By using the Doob's $\varphi_1$-transform of~\eqref{def:tildeBM}, if $\tau$ is a stopping time and $A \in \mathcal{F}_{\tau}$, we have
\begin{equation}
	\label{eq:PtildePBM}
	\tilde \bP_x(A) = \frac{1}{\varphi_1(x)} \bE_x \Big[  e^{\mu_1 \tau} \varphi_1(X_{\tau}) \ind_{\{\tau < H_{\partial \Omega}\}} \ind_A\Big]\,.
\end{equation}

As a consequence, if we apply~\eqref{eq:PtildeP} to some stopping time that verifies \(\tau < H_{\partial \Omega_N}\) for any random walk trajectory starting from \(x\) (or~\eqref{eq:PtildePBM} with some \(\tau\) such that \(\tau < H_{\partial \Omega}\) for any Brownian trajectory starting from \(x\)) and take \(A\) the full event, we get that
\begin{equation}
	\label{eq:Feynman}
	\phi_N(x) = \bE_x\Big[ (\lambda_N)^{-\tau} \phi_N(X_\tau) \Big]   
	\qquad \text{and} \qquad 
	\varphi_1(x) = \bE_x \Big[  e^{\mu_1 \tau} \varphi_1(X_{\tau}) \Big] \,,
\end{equation}
where \(X\) is either a simple random walk (on the left) or a Brownian motion (on the right).

\subsection{About the coupling techniques}
\label{sec:coupling}

The idea behind the proof of Theorem~\ref{th:main} is to use of the formula~\eqref{eq:Feynman} to estimate differences \(\phi_N(x) -\phi_N(y)\) (or \(\varphi_1(x)-\varphi_1(y)\)).
In a few words, we take \(\tau\) to be the hitting time of a ball contained in \(\Omega\) that contains both \(x,y\), so that we can use~\eqref{eq:Feynman} for two random walks with starting points \(x,y\), \textit{with the same stopping time}.
Then, we couple two such random walks \(X^{(1)},X^{(2)}\) by the (standard) \emph{mirror coupling}, see~\eqref{def:mirror} below for a proper definition.
In particular, if the coupling succeeds before exiting the ball, when using~\eqref{eq:Feynman} the terms \((\lambda_N)^{-\tau} \phi_N(X_\tau^{(i)})\) for \(i=1,2\) cancel out.
One is therefore reduced to estimating the probability that the coupling does not succeed before exiting the ball, and this is done by using gambler's ruin estimates (with the slight subtle fact that one needs to control \(\phi(X_{\tau})\) and the growing term \((\lambda_N)^{-\tau}\)).
We refer to Section~\ref{sec:regularity}, where this strategy is developed and explained in details.

As far as \(k\)-th order differences are concerned, we use a similar idea, but one now needs to couple \(2^k\) random walks with starting points \(x + \sum_{j=1}^k \pm e_{i_j} \) (see~\eqref{derivatives} for a definition of the \(k\)-th order difference).
The idea is to consider \(k\) independent random walks \((S^{(j)})_{1\leq j \leq k}\) and their mirror versions \((\hat S^{(j)})_{1\leq j \leq k}\), from which we construct \(2^k\) random walks by summing either \(S^{(j)}\) or \(\hat S^{(j)}\) for \(1\leq j\leq k\).
One can then check that the starting points of the \(2^k\) random walks are indeed the correct ones, and that the coupling succeed as soon as \textit{one of the single mirror couplings} (\textit{i.e.}\ for \(S^{(j)}\)) succeeds.
We refer to Section~\ref{sec:BM-couplings} for details (in the context of Brownian motions, where the argument is more transparent).
In particular, the probability that the multi-mirror coupling fails is the \(k\)-th power of the probability that one single mirror coupling fails, which reduces again to gambler's ruin estimates.
Note here that having reduced the coupling of \(2^k\) walks to that of \(k\) walks is a crucial step, and explains the \(k\)-th power appearing in Theorems~\ref{thm:higherorder-cone} and~\ref{th:reg-cont-cone}.

\paragraph*{On the coupling techniques in the literature}

In addition to giving important information on the first eigenvector~$\phi_N$ (\textit{resp.}~$\varphi_1$) that are useful on their own, we believe that the relatively simple probabilistic ideas behind their proofs might have applications in other contexts.
We should point out that coupling techniques have already been used in the context of spectral problems, see \cite{atarburdzy2004,burdzy2006,burdzykendall2000,chen1998,kendall1989} to cite a few. 
Let us mention in particular \cite{atarburdzy2004}, which establishes via a coupling argument the ``hot spots conjecture'' that the Neumann eigenfunction associated with the second Neumann eigenvalue attain its maximum and minimum at boundary points only.
In fact, the process associated with the Neumann Laplacian is the Brownian motion \emph{reflected at the boundary of $\Omega$} (as opposed to the Brownian motion killed at the boundary for the Dirichlet Laplacian), and~\cite{atarburdzy2004} use a type of mirror coupling that needs to apply also when the Brownian motion hits the boundary.
In comparison, our mirror coupling is much more elementary.

We also mention Kendall's coupling \cite{kendallNonnegativeRicciCurvature1986} and its refinement by Cranston \cite{cranstonGradientEstimatesManifolds1991} which provide control on the gradient of \(\varphi_1\), namely on $\| \nabla \varphi_1 \|_\infty / \| \varphi_1 \|_\infty$ \cite{arnaudonGradientEstimatesDirichlet2020,wangGradientEstimatesDirichlet2004} (in analogy with our derivative estimates in Theorem~\ref{thm:higherorder-cone} below) as well as on $\lambda_1$ \cite{chenApplicationCouplingMethod1994,chenGeneralFormulaLower1997,wangApplicationCouplingMethods1994}. 
The main idea here is to couple two diffusion processes so that they follow some mirrored SDE (that is two SDEs whose coefficients are ``mirrors'' from each other) until they meet. 
Kendall's coupling is a bit more involved than our approach, as it deals with Riemannian manifolds (and can even be extended to more complex settings, see \textit{e.g.}\ \cite{baudoinNoteFirstEigenvalue2022}).

Let us conclude by saying that, to the best of our knowledge, coupling techniques have only been used in continuous settings (and not for the spectral Dirichlet problem), but never in the context of discrete approximations, as in the present paper. We hope that our results will highlight some of the probabilistic ideas at hand and prove valuable in other contexts.

\subsection{Further comments}
\label{sec:comments}

\paragraph*{On controlling the eigenfunction near the boundary}

The estimates of solutions of PDEs near the boundary of a domain is a huge area in analysis.
A wide range of papers prove $L^p$ controls of the solution at the boundary, through Hardy-type inequalities, which usually give a ``weighted'' control on the behavior near the boundary; we refer to~\cite{Davies1999} for an overview. 
An famous example is \cite{frommThirdDerivativeEstimates1994} where, the authors prove, for convex $\Omega$, the integrability of $(\nabla^3 u)(x) d(x,\partial \Omega)^\eps$ for any $\eps > 0$, where $u$ is the solution of a Dirichlet problem (\(\Delta u = f \in \mathcal{C}^{\infty}(\bar \Omega)\) in \(\Omega\) and \(u=0\) on \(\partial \Omega\), recall Remark~\ref{rem:dirichlet}).
However, estimates in cases where derivatives or higher-order derivatives may blow up near the boundary, as in Theorem~\ref{th:reg-cont-cone}, seem to be rarely addressed.

\paragraph{Gambler's ruin estimates}

Quasi-stationary distributions of Markov chains can also help obtain information on gambler's ruin estimates. We notably highlight \cite[Sec. 6.5]{diaconisGamblersRuinEstimates2021} which explains how a proper control on $\phi_N$ yields, under suitable conditions on $\Omega$, numerous estimates for the walk absorbed upon leaving $\Omega_N$. This so-called ``inner-uniform'' condition (see \cite[Def. 6.1]{diaconisGamblersRuinEstimates2021}) is automatically verified if $\Omega$ satisfies an interior version of our own Assumptions \ref{hyp:D} (uniform interior ball) or \ref{hyp:cone} (uniform interior cone).

As an example, the result of \cite[Thm. 5.11]{diaconisGamblersRuinEstimates2021} on the Poisson kernel of SRW, combined with our own estimates, shows that
\[
\Pbf_x \big( X_{H_{\partial \Omega_N}} = y \big) \leq c N^d (1 - \lambda_N)^{-1} \phi_N(x) \sum_{z \sim y} \phi_N(y) \leq c_\eta N^{p-d} \,,
\]
for some $c_\eta$ that is uniform in $x \in \Omega_N^\eta$ and $y \in \Omega_N$. When $\Omega_N$ is ball-like, this is a classical result, see \cite[Lem. 6.3.7]{lawlerRandomWalkModern2010}.



\paragraph{On the eigenfunction approximation}

Let us mention that another classical way of approximating eigenvalue problems is the finite element method. This method also yields the uniform convergence of a discrete problem towards the continuous one, see e.g.\ \cite{ciarlet1973maximum,schatz1977interior} or~\cite{ciarlet2002finite} for an overview, however some major differences arise. 
First, the discretization is done on regular triangulations which are optimized to get a faster convergence rate. Secondly, the discrete functions are in fact transported back to the continuous setting by interpolation over the triangulation. 
This allows the study of the eigenvalue problem on this particular class of functions while still keeping the original Laplace operator on $\Omega$.

The finite difference method instead fully discretizes the problem and forgets about the continuous setting. 
This implies to work with discrete functions as well as a discrete Laplace operator, which is much closer to an analysis using Markov chains such as the random walk.
Let us stress that the proof of Bramble and Hubbard \cite{bramble1968effects} relies on an analysis of the Green's function of the random walk killed at the boundary, as well as a Walsch approximation theorem. 
In comparison, our paper gives some  uniform control of the discrete eigenvector $\phi_N$.

\paragraph*{A word on some of our motivations}

One of our main motivation for this work was to study the random walk confined in $\Omega_N$ (or its quasi-stationary distribution), whose transition kernel is given by~\eqref{eq:def-p_N} and involves rations of the form $\phi_N(x)/\phi_N(y)$.
Another interpretation is that the confined random walk is a random walk in conductances $c_{N}(x,y) = \phi_N(x)\phi_N(y)$, referred to as \textit{tilted} random walk, so our regularity results appear crucial in studying fine properties of this process.

For instance, the regularity of $\phi_N$ can be used to estimate on which time scale the tilted random walk is comparable to the simple random walk, \textit{i.e.}\ with constant conductances --- or with the random walk with conductances $\tilde c_{N}(x,y) = \phi_N(x)\phi_N(y)$, using convergence results.
As another example, the second author investigates in~\cite{bouchot2024confined} the geometry of the \textit{confined} random walk (\textit{i.e.}\ conditioned to remain forever in $\Omega_N$), in the bulk of~$\Omega_N$, giving a coupling between the confined walk and a \textit{tilted} random interlacement among conductances \(\tilde c_N(x,y)\).
Here again, the regularity of $\phi_N$ could be useful to determine on which scales the \textit{tilted} random interlacements are comparable with standard random interlacements.
The results of the present article also appear crucial to study covering times of the confined random walk, which is the object of~\cite{bouchot-covering}.

\section{Some preliminaries and estimates near the boundary}
	
In this section, we introduce some probabilistic objects that will appear in the proof, together with useful estimates.
In particular, we prove Propositions~\ref{prop:unifbound} and ~\ref{prop:unifcone}.
In the following $B(x,R)$ denotes the discrete Euclidean ball centered at~$x$ of radius~$R$.

\subsection{Rough bounds on the first eigenvalue}

In the proof of Theorem~\ref{thm:reg-cone}, we only need very rough (and easy) bounds on the first eigenvalue~$\lambda_N$, that we collect in the following lemma.

\begin{lemma}
\label{lem:eigenvalue}
Let $\Omega$ be an open and bounded set and $\Omega_N$ the connected component of~$0$ in $(N\Omega) \cap \bbZ^d$. 
Then there are two constants $c_\Omega,c_\Omega'$ that depends only on $\Omega$ such that the principal eigenvalue $\lambda_N$ of the transition matrix $P_N$ of the simple random walk killed upon exiting $\Omega_N$ verifies
\[
1- \frac{c_\Omega}{N^2}\leq \lambda_N \leq  1- \frac{c_\Omega'}{N^2}\,.
\]
Equivalently, there are constants $0<\gamma_\Omega<\gamma_\Omega'<1$ such that $\gamma_\Omega\leq (\lambda_N)^{N^2} \leq \gamma'_\Omega$ uniformly in~$N$.
\end{lemma}

\begin{proof}
The proof is very simple: we simply use that $\Omega$ contains a ball $B^{\rm int}$ and is contained in a ball $B^{\rm ext}$.
Since the principal eigenvalue is monotone in the domain, we obtain that $\lambda_N$ is sandwiched between the principal eigenvalues of $B^{\rm ext}_N$ and $B^{\rm int}_N$.

It simply remains to see that the principal eigenvalue $\lambda_{B_N}$ of a ball verifies $1- \frac{c}{N^2}\leq \lambda_{B_N}\leq 1- \frac{c'}{N^2}$, which is classical, see~\cite[Chap. 6.9]{lawlerRandomWalkModern2010} or Remark~\ref{rem:eigenball} below.
\end{proof}

\subsection{Random walk and confined random walk}
\label{sec:cond-rw}

We let $(X_n)_{n\geq 0}$ a simple nearest-neighbor random walk on $\mathbb{Z}^d$, whose transition probabilities are $p(x,y)\defeq \frac{1}{2d} \ind_{\{x\sim y\}}$, and let $\mathcal{F}_n = \sigma(X_0,X_1,\ldots, X_n)$ be the associated filtration.
We denote by $\bP_x$ the law of the random walk when starting from $x$.
For a set $\Lambda \subset \mathbb{Z}^d$, we denote
\[
H_{\Lambda} \defeq \inf\{n\geq 0, X_n \in \Lambda\} 
\]
the hitting time of $\Lambda$, with $\inf \varnothing = +\infty$.

Then, it is well-known that the first eigenvector $\phi_N$ is linked to the survival probability for the walk killed on $\partial \Omega_N$, see e.g.\ \cite[Prop.~6.9.1]{lawlerRandomWalkModern2010}: fixing $N$ large enough, for any $x \in \Omega_N$ we have
\begin{equation}
\label{eq:phiN-limite-proba-survie}
	N^{-2d} \sum_{z \in \Omega_N} \phi_N(z) \times \phi_N(x)  = \lim_{t \to +\infty} \lambda_N^{-t} \, \bP_x(H_{\partial \Omega_N} > t)   \, .
\end{equation}

Let us stress that~\eqref{eq:phiN-limite-proba-survie} shows in particular that, for $x,y\in \Omega_N$, $x\sim y$,
\begin{equation}
\label{eq:conditioning}
\lim_{t\to\infty} \bP_x\big( X_1 =y \mid H_{\partial \Omega_N} > t \big) = \frac{1}{2d} \frac{\lambda_N^{-1}\phi_N(y)}{\phi_N(x)} =: \tilde p_N(x,y) \,,
\end{equation}
and justifies the definition~\eqref{eq:def-p_N} of the \textit{confined} random walk,  which corresponds to the random walk conditioned to remain (forever) in $\Omega_N$.

\begin{remark}
\label{rem:eigenball}
Note that~\eqref{eq:phiN-limite-proba-survie} shows that, for any $x\in \Omega_N$,
$\lambda_N  = \lim_{t\to\infty} \bP_x(H_{\partial \Omega_N} > t)^{1/t}$, which also shows that $(\lambda_N)^{N^2} = \lim_{t\to\infty} \bP_0(H_{\partial \Omega_N} > t N^2)^{1/t}$.
For instance, if $\Omega_N$ is a ball of radius~$r N$ (say centered at $0$), one can  easily verify using Markov's property that, for any $k\in \mathbb{N}$,
\begin{multline*}
\Big(\inf_{x\in \Omega_N, |x|<\frac12 rN} \bP_x\big(H_{\partial \Omega_N} > N^2, |X_{N^2}|<\tfrac12 r N  \big) \Big)^k \\
\leq \bP_0(H_{\partial \Omega_N} > k N^2) \leq  \Big(\sup_{x\in \Omega_N} \bP_x\big( H_{\partial \Omega_N} > N^2 \big) \Big)^k \,.
\end{multline*}
By the invariance principle, we find that there are two constants $\gamma<\gamma'<1$ (that depend on $r$) such that $\gamma<(\lambda_N)^{N^2} \leq \gamma'$, showing as a consequence that $1- \frac{c}{N^2} \leq \lambda_N \leq 1- \frac{c'}{N^2}$.
\end{remark}

\subsection{Gambler's ruin estimates and a priori bounds on \texorpdfstring{$|\phi_N(x)|$}{}}
	\label{sec:boundunif}

Let us state here a random walk estimate, based on classical gambler's ruin arguments; its proof is postponed to Section~\ref{sec:ruine-joueur}.
We then show how one can deduce Proposition~\ref{prop:unifbound} from it.

\begin{lemma}\label{lem:sortie-ruine-joueur}
Under the uniform exterior ball Assumption~\ref{hyp:D}, there is a constant $c > 0$ such that for all $N$ large enough, all $x \in \Omega_N$,
		\begin{equation*}
			\Pbf_x(H_{\partial \Omega_N} > N^2) \leq  C \, \frac{d(x,\partial \Omega_N)}{N} \, .
		\end{equation*}
\end{lemma}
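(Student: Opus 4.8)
The plan is to bound $\Pbf_x(H_{\partial D_N} > N^2)$ by comparing the survival of the walk to a one-dimensional gambler's ruin problem in the direction ``away from the boundary''. Fix $x \in D_N$ and let $\delta \defeq d(x,\partial D_N)$; we may assume $\delta$ is much smaller than $N$ (otherwise the bound is trivial since probabilities are at most $1$, and $\delta/N$ is bounded below). Let $x^\star \in \partial D$ be (close to) the point realizing $d(x/N, \partial D)$, and let $z$ be the center of the exterior ball $B(z,\gep)$ tangent at $x^\star$ provided by Assumption~\ref{hyp:D}; rescaling, $B(Nz, \gep N) \cap D_N = \emptyset$. The key geometric observation is that the Euclidean distance $|X_n - Nz|$ is, when the walk is in $D_N$, bounded above by roughly $\gep N + \delta$ — indeed $D_N$ lies outside $B(Nz,\gep N)$, and $x$ itself is within distance $O(\delta)$ of the sphere $\partial B(Nz,\gep N)$. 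So I would track the radial coordinate $R_n \defeq |X_n - Nz|$: it starts at $R_0 \leq \gep N + C\delta$, the walk can only survive while $R_n \geq \gep N$ (roughly; one must be careful that $\partial D_N$ is a bit inside, but one can afford a constant), hence survival up to time $N^2$ forces the process $(R_n)$ to stay in a band of width $O(\delta)$ above the level $\gep N$ for $N^2$ steps.

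The next step is to turn this into a genuine gambler's ruin estimate. The process $R_n^2 = |X_n - Nz|^2$ is a submartingale-type process: a direct computation gives $\bE_x[R_{n+1}^2 - R_n^2 \mid \mathcal F_n] = 1$ (the discrete Laplacian of $|\cdot|^2$ equals $2d \cdot \frac{1}{2d}\cdot$something $=1$ with the normalization here), so $M_n \defeq R_n^2 - n$ is a martingale. More usefully, I would argue via the one-dimensional projection or via the fact that $R_n$ itself behaves like a random walk with a small outward drift of order $1/(\gep N)$ when $R_n \approx \gep N$: writing $R_{n+1} - R_n$, its conditional mean is $\approx \frac{d-1}{2dR_n} \asymp \frac{1}{N}$ and its conditional variance is $\asymp 1$. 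Thus, as long as the walk stays alive, $R_n$ is (dominated by) a random walk on an interval of length $\sim 2\delta$ (between the ``absorbing'' level $\asymp \gep N$ below and, say, $\gep N + C\delta$ above — if it exits above one restarts the argument, losing only a constant factor), with bounded-variance increments and outward drift $O(1/N)$. Standard gambler's ruin / exit-time estimates then give: starting at distance $\delta$ from the lower barrier inside an interval of width $\asymp \delta$, the probability of surviving $N^2 \asymp (N/\delta)^2 \cdot \delta^2$ steps without hitting the lower barrier decays geometrically once one has run for order $\delta^2$ steps, and the probability of not hitting the lower barrier in the first $\asymp \delta^2$ steps is $O(\delta/N)$ — precisely because the lower barrier is hit with probability $1 - O(\text{drift} \times \text{width}) = 1 - O(\delta/N)$ by the optional stopping / ruin formula for a walk with drift $O(1/N)$ on an interval of width $O(\delta)$. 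Combining, $\Pbf_x(H_{\partial D_N} > N^2) \leq C\delta/N$.

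The main obstacle I expect is the bookkeeping near the boundary: the discretised boundary $\partial D_N$ is not exactly the sphere $\partial B(Nz,\gep N)$, so one must check that the radial process actually does get absorbed (i.e.\ the walk exits $D_N$) before $R_n$ can descend more than $O(1)$ below $\gep N + C\delta$ — this is where Assumption~\ref{hyp:D} is essential, as it guarantees $D_N$ stays outside the ball uniformly, so any configuration of the walk with $R_n$ small has already left $D_N$. A secondary technical point is handling the possibility that $R_n$ wanders far above $\gep N + C\delta$: I would deal with this by a restart argument (each excursion above height $\gep N + C\delta$ either returns to the band — where the ruin estimate applies afresh — or the walk has genuinely moved into the bulk, where surviving $N^2$ steps is no constraint and one just uses the trivial bound, but crucially one can reach the bulk only after traveling distance $\gtrsim \delta$, which already costs a factor by a separate gambler's ruin on the interval $[\delta, \eta N]$). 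Putting these pieces together with Lemma~\ref{lem:eigenvalue} to control any residual $\lambda_N^{-t}$ factors (not actually needed here since the statement is about the bare survival probability) yields the claim; the constant $c$ depends only on $\gep$ and $d$, hence only on~$D$.
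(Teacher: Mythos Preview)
Your core idea --- use the exterior ball $B(Nz,\gep N)\subset D_N^c$ from Assumption~\ref{hyp:D} and track the radial coordinate $R_n=|X_n-Nz|$ --- is the paper's. But the execution has a genuine gap. You assert that survival in $D_N$ forces $R_n$ to stay in a band of width $O(\delta)$ above $\gep N$; this is false, since $D_N$ has diameter of order $N$ and the walk can make $R_n$ of order $N$ while remaining in $D_N$. You flag this as a ``secondary technical point'' and propose a restart argument, but the patch does not work as stated: having $R_n\gg \gep N+C\delta$ does \emph{not} mean the walk is in the bulk of $D_N$ --- it may be near a different portion of $\partial D_N$, far from the original tangent ball --- so the dichotomy ``back in the band, or in the bulk'' is not exhaustive, and your separate gambler's ruin on $d(X_n,\partial D_N)\in[\delta,\eta N]$ controls a different quantity than $R_n$. (The intermediate ruin computation is also off: in an interval of width $\asymp\delta$ with outward drift $O(1/N)$, starting near the top, the probability of escaping upward before hitting the bottom is $\Theta(1)$, not $O(\delta/N)$.)

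The paper's fix is a one-line simplification of your own plan: instead of placing the upper barrier at $\gep N+C\delta$, place it at $AN$ with $A$ large enough that $D_N\subset B(Nz,AN)$. Then hitting \emph{either} sphere of the annulus $B(Nz,AN)\setminus B(Nz,\gep N)$ forces exit from $D_N$, so $H_{\partial D_N}\le H_{B(Nz,\gep N)}\wedge H_{B(Nz,AN)^c}$. The annulus now has width of order $N$ and the starting point is at distance $O(\delta)$ from the inner sphere; a single annulus survival estimate (the paper's Lemma~\ref{lem:arg-martingale+survie-anneau}, proved via exactly the martingale $|X_t|^2-t$ you mention together with the ruin probability~\eqref{eq:ruine-joueur}) gives $\Pbf_x(H_{\partial D_N}>N^2)\le c\,\delta/N$ directly --- no band, no restarts.
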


\begin{proof}[Proof of Proposition~\ref{prop:unifbound}]
Let $t > N^2$. Using the Markov property, we can write
	\begin{equation*}
		\lambda_N^{-t} \Pbf_x(H_{\partial \Omega_N} > t) 
		\leq \lambda_N^{-N^2} \Pbf_x(H_{\partial \Omega_N} > N^2) 
		\times \lambda_N^{-(t-N^2)} \sup_{z \in \Omega_N} \Pbf_z(H_{\partial \Omega_N} > t-N^2) \, .
	\end{equation*}	
Notice that using Lemma~\ref{lem:eigenvalue}, the term  $(\lambda_N)^{-N^2}$ is bounded by some universal constant, independent of~$N$.
Applying Lemma~\ref{lem:sortie-ruine-joueur}, we therefore get that
\[
\lambda_N^{-t} \Pbf_x(H_{\partial \Omega_N} > t) \leq  C \, \frac{d(x,\partial \Omega_N)}{N} \times \sup_{z \in \Omega_N} \lambda_N^{N^2-t}  \Pbf^N_z(H_{\partial \Omega_N} > t-N^2) \, .
\]
We can now take the limit as $t \to +\infty$ on both sides and exchange the supremum on $z \in \Omega_N$ and the limit as $t \to +\infty$ (since $N$ is fixed). Applying \eqref{eq:phiN-limite-proba-survie} then yields
\begin{equation*}
	\phi_N(x) \leq  C \, \frac{d(x,\partial \Omega_N)}{N} \sup_{z \in \Omega_N} \phi_N(z) \,.
\end{equation*}
Now, it remains to show that there is a constant $C'$ such that
\begin{equation}\label{eq:phiN-bornee}
\sup_{z \in \Omega_N} \phi_N(z) \leq C' \,.
\end{equation}
The proof is almost identical to the one of Lemma A.1 in \cite{dingDistributionRandomWalk2021a}: observe that $\phi_N(X_t) \lambda_N^{-t}$ is a martingale and use the optimal stopping theorem at time $\tau_N \defeq N^2 \wedge H_{\partial \Omega_N}$ to get
\begin{equation*}
	\begin{split}
		\phi_N(z) &= \Ebf_z \big[ \phi_N(X_{\tau_N}) \lambda_N^{-\tau_N} \big] \\
		&= \lambda_N^{-N^2} \sum_{w \in \Omega_N} \phi_N(w) \Pbf_z(H_{\partial \Omega_N} > N^2, X_{N^2} = w) + \Ebf_z \Big[ 0 \cdot \lambda_N^{-H_{\partial \Omega_N}} \ind_{\{H_{\partial \Omega_N} \leq N^2\}} \Big] \, ,
	\end{split}
\end{equation*}
where the second term is zero since $\phi_N \equiv 0$ on $\partial \Omega_N$. Removing the constraint  $H_{\partial \Omega_N} > N^2$ and using the local limit theorem \cite[Theorem 2.1.3]{lawlerRandomWalkModern2010}, we get
\[	
\sup_{z \in \Omega_N} \phi_N(z) \leq \lambda_N^{-N^2} \sum_{w \in \Omega_N} \phi_N(w) \Pbf_z( X_{N^2} = w) \leq c \gamma_\Omega^{-1} \sum_{w \in \Omega_N} \phi_N(w) N^{-d}  \,.
	\]
Then, the Cauchy--Schwarz inequality yields
\[
	\sup_{z \in \Omega_N} \phi_N(z)\leq c N^{-d} \Big( |\Omega_N| \sum_{w \in \Omega_N} \phi_N^2(w) \Big)^{1/2} = c |\Omega_N|^{1/2} N^{-d/2} \, ,
\]
where we used the normalization given by \eqref{eq:def-phiN-L2} for the last equality. Since $\Omega$ is bounded, by definition of $\Omega_N$ we have that $|\Omega_N|^{1/2} N^{-d/2}$ is also bounded uniformly in $N$, hence proving \eqref{eq:phiN-bornee} and concluding the proof.
\end{proof}

Assuming only the uniform cone condition Assumption~\ref{hyp:cone}, the main theorem of~\cite{DW15} gives the following result, which replaces Lemma~\ref{lem:sortie-ruine-joueur} and somehow shows that it is (strictly) easier for a random walk to avoid a cone than a ball.

\begin{theorem}[\cite{DW15}]
	Under Assumption~\ref{hyp:cone}, there is a constant $C>0$ such that, for any $x\in \Omega_N$,
	\[
	\Pbf_x(H_{\partial \Omega_N} > N^2) \leq  C \, \Big( \frac{d(x,\partial \Omega_N)}{N} \Big)^{p}  \,,
	\] 
	where $p = p(\alpha)\in (0,1]$ is defined in~\eqref{def:p}.
\end{theorem}

\noindent
Using this inequality in the above proof, one obtains Proposition~\ref{prop:unifcone} instead of Proposition~\ref{prop:unifbound}.

\begin{remark}
\label{rem:conti1}
Let us stress that the proof of Propositions~\ref{prop:unifbound} and~\ref{prop:unifcone} hold in the continuum, \textit{i.e.}\ considering a Brownian motion in $\Omega$ instead of the simple random walk in $\Omega_N$.
Using standard gambler's ruin estimates for Brownian motion (or~\cite{banuelos1997brownian,deblassie1987} under the exterior cone condition), we get the following bounds on the first continuous eigenfunction: for every~$x\in \Omega$,
\begin{equation}
	\label{eq:unifboundconti}
		|\varphi_1(x)| \leq C d(x,\partial \Omega)^p\,,
\end{equation}
with $p=1$ under Assumption~\ref{hyp:D} and $p\in (0,1]$ from~\eqref{def:p}-\eqref{def:p2} under Assumption~\ref{hyp:cone}.
This proves in particular Theorem~\ref{thm:reg-cone}.

For the sake of completeness, we give in Section \ref{sec:ruine-joueur} the proof of a Brownian analogue to Lemma \ref{lem:sortie-ruine-joueur}. The continuum version of Proposition \ref{prop:unifbound} is then proven with the same proof as in the discrete setting.
\end{remark}

\section{Difference estimates for \texorpdfstring{$\phi_N$}{} and \texorpdfstring{$\varphi_1$}{} via couplings}
\label{sec:regularity}

In this section, we prove all of our regularity estimates: we first prove Theorem~\ref{thm:reg-cone}; we start with the simple random walk case.
Then we explain how the proof works for the Brownian motion and how it adapts to higher-order differences, proving Theorem~\ref{th:reg-cont-cone}.
Finally, we conclude by the proof of the higher-order differences in the discrete case, \textit{i.e.}\ Theorem~\ref{thm:higherorder-cone}.

\subsection{Single difference estimates and simple mirror coupling}
\label{sec:srw-couplings}

In this section, we prove Theorem~\ref{thm:reg-cone}, whose proof we divide into several steps.
First of all, we treat the case where $x,y$ are at distance $2$ from each other, which allows us to treat the case with an even distance between $x,y$; we then adapt it to the general case.

\paragraph{When $x,y$ are at distance $2$ from each other.}
We work with fixed $x,y\in \Omega_N$ at distance $2$ from each other and such that $d(x,\partial \Omega_N)$ is large enough; in the case where $d(x,\partial \Omega_N) \leq C$, then one simply uses Proposition~\ref{prop:unifbound} to get that $|\phi_N(x)-\phi_N(y)| \leq \max(\phi_N(x),\phi_N(y)) \leq  C N^{-1}$. 

\medskip
\noindent
{\it Step 1. Rewriting of $|\phi_N(x)-\phi_N(y)|$.}
Our starting point is to use the Feynman--Kac relation~\eqref{eq:PtildeP} to rewrite $|\phi_N(x)-\phi_N(y)|$.
Let us consider the discrete ball $B(z,R)$ centered at the point $z$ such that $x\sim z$ and $y\sim z$, and of radius \(R \leq \frac{d(z,\partial \Omega_N)}{2}\).
We also assume that \(R\leq \delta_\Omega N\) for some fixed (but small) constant $\delta_\Omega$ that only depends on the domain $\Omega$ (this simply might change the constant in Theorem~\ref{th:reg-cont-cone}).

We also denote $H_R\defeq H_{\partial B(z,R)}$ for simplicity.
Then, using the relation~\eqref{eq:PtildeP} with the stopping time \(H_R\) so that \(H_R < H_{\partial \Omega_N}\), we obtain the following Feynman--Kac formula (recall~\eqref{eq:Feynman}): for any $x \in B(z,R)$
\begin{equation}
	\label{eq:sum=1}
	\phi_N(x) = \Ebf_{x}\Big[ (\lambda_N)^{-H_R}\, \phi_N\big(X_{H_R} \big) \Big] \, .
\end{equation}
In particular, for $x,y$ with $x\sim z$, $y\sim z$, we have
\begin{equation}
\label{eq:diff-phi-diff-esp}
	\big| \phi_N(x) - \phi_N(y) \big| =  \bigg|\Ebf_{x}\Big[ (\lambda_N)^{-H_R}\, \phi_N\big(X_{H_R} \big) \Big] - \Ebf_{y}\Big[ (\lambda_N)^{-H_R} \, \phi_N\big(X_{H_R} \big) \Big] \bigg| \, .
\end{equation}

Our goal is now to estimate the difference of expectations in~\eqref{eq:diff-phi-diff-esp} thanks to a coupling argument, which works when $x,y$ are at distance $2$ (due to the periodicity of the random walk).

\medskip
\noindent
{\it Step 2. Coupling argument.}
We now construct a coupling of two random walks $X^{(1)},X^{(2)}$ respectively starting at $x$ and $y$.
The coupling that we use is the so-called mirror coupling. 
The idea is to consider the hyperplane $\mathcal{H}=\mathcal{H}_{x,y}$ which is the mediator between~$x$ and~$y$ (and goes through~$z$). 
We then take a random walk $X^{(1)}$ that starts from~$x$ and we let \(\tau = \tau_{\cH} = \inf\{ n \geq 0, X_n^{(1)} \in \cH \}\) be the hitting time of \(\cH\).
We then define~\(X^{(2)}\) the \textit{mirror} version of \(X^{(1)}\) with respect to \(\cH\) until time \(\tau\), after which we set \(X^{(2)}=X^{(1)}\).
More precisely, decomposing \(X_n^{(1)}\defeq X_{n}^{(1),\perp}+X_{n}^{(1),\parallel}\) where \(X_{n}^{(1),\perp} = \langle X_n^{(1)}, e_{\cH} \rangle  e_{\cH} \) with \(e_{\cH} = \frac12 (x-y)\) is the component perpendicular to~\(\cH\) and \(X_{n}^{(1),\parallel} = X_n^{(1)}- X_{n}^{(1),\perp}\) is the component parallel to \(\cH\), we define
\begin{equation}
	\label{def:mirror}
	X_n^{(2)} \defeq 
	\begin{cases}
	X_{n}^{(1),\perp} -X_{n}^{(1),\parallel} & \text{ if } n\leq \tau \,,\\
	X_n^{(1)} & \text{ if } n > \tau \,.
	\end{cases}
\end{equation}
Notice that \(\tau\) is also the hitting time of $\mathcal{H}$ for $X^{(2)}$ and also that $\tau = \min\{n, X_n^{(1)}=X_n^{(2)}\}$ is the meeting time of $X^{(1)},X^{(2)}$, see Figure~\ref{fig:Balls} for an illustration.
We denote by $\bP_{x,y}$ the joint law of $(X_n^{(1)},X_n^{(2)})_{n\geq 0}$ that we have just constructed, which is our mirror coupling.

\smallskip
We denote by $H_R^{1},H_R^{2}$ the hitting times of $\partial B(z,R)$ by $X^{(1)},X^{(2)}$ respectively, and we stress that, on the event $\{\tau \leq \min(H_R^{1},H_R^{2})\}$ (the coupling is \textit{successful}), we have that both $X^{(1)},X^{(2)}$ reach $\partial B(z,R)$ at the same time and at the same point; we refer to Figure~\ref{fig:Balls} for an illustration. 
In fact, under the mirror coupling, we also have $H_R^1=H_R^2$, but we keep the index of the walk in the notation for clarity.
Therefore, we obtain
\begin{multline*}
\bigg|\Ebf_{x}\Big[ (\lambda_N)^{-H_R} \, \phi_N\big(X_{H_R} \big) \Big] - \Ebf_{y}\Big[ (\lambda_N)^{-H_R}\, \phi_N\big(X_{H_R} \big) \Big] \bigg|\\
 \leq \Ebf_{x,y}\bigg[ \Big| (\lambda_N)^{-H_R^{1}} \,\phi_N\big(X_{H_R^{1}}^{(1)} \big)  - (\lambda_N)^{-H_R^{2}} \,\phi_N\big(X_{H_R^{2}}^{(2)} \big) \Big| \; \ind_{\{\tau > \min(H_R^{1},H_R^{2})\}}\bigg] \\
 \leq  \Ebf_{x}\Big[ (\lambda_N)^{-H_R}\, \phi_N\big(X_{H_R} \big) \ind_{\{\tau > H_R\}}\Big] + \Ebf_{y}\Big[  (\lambda_N)^{-H_R} \, \phi_N\big(X_{H_R} \big) \ind_{\{\tau > H_R\}}\Big]\, .
\end{multline*}

\begin{figure}
\begin{center}
\includegraphics[scale=0.77]{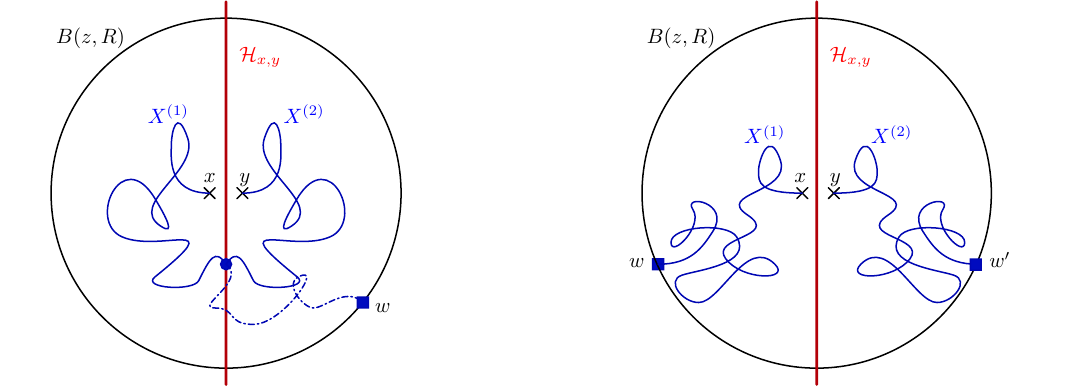}
\caption{On the left a successful coupling; the two walks exit $B(z,R)$ through the same point. On the right, the coupling fails, and the two walks reach $\partial B(z,R)$ before $\mathcal{H}$ and the coupling fails; the exit points are different (and symmetric).}
\label{fig:Balls}
\end{center}
\end{figure}

Now, we simply bound \(\phi_N(X_{H_R}) \leq \sup_{w\in B(z,R)} \phi_N(w)\), so that we get that there is a constant $C'>0$ such that, for $x,y$ at distance $2$,
\begin{equation}
	\label{eq:diff-final}
	|\phi_N(y)-\phi_N(x)| \leq 2  \sup_{w\in B(z,R)} \phi_N(w) \times \bE_x\Big[ (\lambda_N)^{-H_R}  \ind_{\{\tau >H_R\}} \Big] \,,
\end{equation}
using also the symmetry between $x$ and $y$.

\medskip
\noindent
{\it Step 3. Technical estimate and conclusion of the proof.}
It then only remains to control the expectation appearing in~\eqref{eq:diff-final}, which are (standard) simple random walk estimates (by translation and rotation invariance, we can take $\mathcal{H} =\{z=(z_1,\ldots, z_d)\in \mathbb{Z}^d, z_1=0\}$). 
We postpone the proof of the following lemma to Section~\ref{sec:lemhyperplan}, which collects some other useful random walk estimates.

\begin{lemma}[Gambler's ruin]
\label{lem:eviter-hyperplan-ruine}
There is a constant $c_d > 0$ (that depends only on the dimension) and a constant $C >0$ such that, for any $R\geq 1$ sufficiently large and any $x \in B(0,R/2)$, we have
\begin{equation}
\Ebf_{x} \Big[ e^{ c_d H_{R}/R^2 } \ind_{\{\tau_{\mathcal{H}} > H_{R}  \}} \Big] \leq C\, \frac{d(x,\mathcal{H})}{R} \, ,
\end{equation}
where $H_{R}$ is the hitting time of the ball $B(0,R)$ and $\tau_{\mathcal{H}}$ is the hitting time of the hyperplane $\mathcal{H} =\{z=(z_1,\ldots, z_d)\in \mathbb{Z}^d, z_1=0\}$.\end{lemma}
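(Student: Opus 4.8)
The plan is to split the expectation according to the (random) time $H_{\mathcal H}\wedge H_R$ at which the walk first hits either the hyperplane $\mathcal H$ or the sphere $\partial B(0,R)$, and to control the exponential weight by a fixed constant times its value at a deterministic "budget" time. Concretely, on the event $\{\tau_{\mathcal H}>H_R\}$ we have $H_R<\tau_{\mathcal H}$, so the walk stays in the half-space $\{z_1\geq 0\}$ up to time $H_R$; the first coordinate $X^{(1)}_n\cdot e_1$ is then (up to the $\frac1{2d}$ holding probability at each step coming from the other coordinates) a one-dimensional lazy simple random walk started at $0$ and killed when it hits $-1$, conditioned to survive until $H_R$. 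The key point is that the gambler's ruin estimate gives $\bP_0(\tau_{\mathcal H}>n)\leq C/\sqrt{n}$ for the one-dimensional walk, and more relevantly $\bP_0(\tau_{\mathcal H}>H_R)\leq C/R$ by the standard hitting estimate for the half-space versus sphere (the probability that a walk from distance $1$ of a hyperplane reaches distance $R$ before the hyperplane is $\asymp 1/R$). So the difficulty is purely the unbounded weight $e^{c_d H_R/R^2}$.

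To handle the weight, I would use a layer-cake / Markov-property iteration. Write $\Ebf_0[e^{c_dH_R/R^2}\ind_{\tau_{\mathcal H}>H_R}] = \sum_{k\geq 0}\Ebf_0[e^{c_dH_R/R^2}\ind_{\tau_{\mathcal H}>H_R}\ind_{kR^2\leq H_R<(k+1)R^2}] \leq \sum_{k\geq 0} e^{c_d(k+1)} \bP_0(\tau_{\mathcal H}>H_R,\ H_R\geq kR^2)$. On the event $\{H_R\geq kR^2,\ \tau_{\mathcal H}>H_R\}$ the walk has survived (i.e. avoided $\mathcal H$) up to time $kR^2$ while staying inside $B(0,R)$; applying the Markov property at times $R^2, 2R^2,\dots,kR^2$ and using that there is a uniform constant $q=q(d)<1$ with $\sup_{u\in B(0,R)}\bP_u(\tau_{\mathcal H}>R^2,\ \text{and still in }B(0,R))\leq q<1$ (an invariance-principle / diffusive-scaling estimate: in time $R^2$ a walk started anywhere in $B(0,R)$ has a probability bounded away from $0$ of hitting $\mathcal H$, uniformly in $R$), one gets $\bP_0(\tau_{\mathcal H}>H_R,\ H_R\geq kR^2)\leq q^{k}\cdot \bP_0(\tau_{\mathcal H}>H_R)$, up to adjusting constants. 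Actually to get the clean factor $1/R$ I would instead bound $\bP_0(\tau_{\mathcal H}>H_R,\ H_R\geq kR^2)\leq \bP_0(\tau_{\mathcal H}> kR^2)$ and use the one-dimensional gambler's ruin estimate $\bP_0(\tau_{\mathcal H}>kR^2)\leq C/(R\sqrt k)$ combined with a $q^k$ factor from confinement in $B(0,R)$; either route works, the point being to extract simultaneously a factor $1/R$ and a summable-in-$k$ factor that beats $e^{c_d k}$.

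Choosing $c_d$ small enough (smaller than $-\log q$, where $q<1$ is the uniform one-step-of-length-$R^2$ survival-and-confinement probability) makes the series $\sum_k e^{c_d(k+1)} q^k$ converge, and pulling out the common factor $\bP_0(\tau_{\mathcal H}>H_R)\leq C/R$ gives the claimed bound $C/R$. The main obstacle is establishing the uniform (in $R$ and in the starting point inside $B(0,R)$) bound $q<1$ on the probability of avoiding $\mathcal H$ for a duration $R^2$ while remaining in $B(0,R)$: this should follow from Donsker's invariance principle (Brownian motion started in a ball of radius $R$, rescaled, hits a fixed hyperplane through its interior within unit time with probability bounded below) together with a local-CLT or gambler's-ruin control near the hyperplane to make the estimate genuinely uniform down to lattice scale; once that is in hand, the layer-cake summation is routine. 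I would also note that the same argument, replacing "hyperplane" by "exterior cone" and the $1/R$ ruin estimate by $R^{-\zeta}$, yields the cone-version needed for Remark~\ref{rem:cone5}.
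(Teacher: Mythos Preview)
Your layer-cake decomposition plus Markov-iteration strategy is exactly the paper's approach, and the choice of $c_d$ small enough to beat the geometric factor is the same. One step, however, does not go through as written. Iterating the Markov property at times $R^2,2R^2,\dots,kR^2$ with the bound $\sup_{u\in B_R}\bP_u(\tau_{\mathcal H}\wedge H_R>R^2)\leq q$ yields $\bP_0(\tau_{\mathcal H}>H_R,\ H_R\geq kR^2)\leq q^k$, but it does \emph{not} produce the additional factor $\bP_0(\tau_{\mathcal H}>H_R)\leq C/R$ that you claim: after the first $R^2$ steps the walk may sit anywhere in $B_R$, far from $\mathcal H$, so the $1/R$ ruin factor is no longer available. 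Your alternative (use $\bP_0(\tau_{\mathcal H}>kR^2)\leq C/(R\sqrt k)$ ``combined with'' a $q^k$ confinement factor) cannot be combined multiplicatively either, since the two bounds come from dropping complementary parts of the event; taking a minimum only yields $R^{-1/2}$ after summation.

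The fix---which is precisely what the paper does---is to split asymmetrically: apply Markov \emph{once} at time $R^2$ to write
\[
\bP_0\big(\tau_{\mathcal H}>H_R,\ H_R\geq kR^2\big)\ \leq\ \bP_0\big(\tau_{\mathcal H}\wedge H_R\geq R^2\big)\,\sup_{v\in B_R}\bP_v\big(H_R\geq (k-1)R^2\big).
\]
The first factor is $\leq C/R$ because the walk starts at distance $1$ from $\mathcal H$ (the paper obtains this, and the companion bound $\bP_0(\tau_{\mathcal H}>H_R)\leq C/R$ for the $k=0$ term, by a tangent-ball comparison reducing to the annulus estimate of Lemma~\ref{lem:arg-martingale+survie-anneau}; your one-dimensional $\bP_0(\tau_{\mathcal H}>R^2)\leq C/R$ would also do). The second factor is $\leq \gamma_d^{k-1}$ using ball-confinement \emph{alone}, so uniformity in $v$ is immediate from the invariance principle and you avoid the ``main obstacle'' you flagged. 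This is your intended ``extract $1/R$ and a summable-in-$k$ factor'' plan, with the two pieces coming from different time windows rather than the same one.
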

\noindent
Let us stress that the slight difficulty in this lemma comes from the term $e^{c_d H_R/R^2}$, which is unbounded: if this term were absent, it would be a standard gambler's ruin estimate.

\smallskip
We can then use that $\lambda_N^{-1} \leq e^{c_\Omega/N^2}$ for some constant $c_\Omega$, see Lemma~\ref{lem:eigenvalue}, so that we can bound $(\lambda_N)^{-H_{R}}\leq e^{c_\Omega H_R^x/N^2} \leq e^{c_\Omega \delta_\Omega^2 H_R/R^2}$ in~\eqref{eq:diff-final}, recalling also that we considered $R\leq \delta_\Omega N$.
Therefore, having fixed $\delta_\Omega$ small enough (how small depends on the domain $\Omega$), we can apply Lemma~\ref{lem:eviter-hyperplan-ruine} (with $x=e_1$ so $d(x,\mathcal{H})=1$) to obtain that for $x,y$ at distance $2$ from each other,
\begin{equation}
	\label{eq:lastdisplay1}
	|\phi_N(y) - \phi_N(x)| \leq \frac{2 C}{R}\,\sup_{w\in B(z,R)} \phi_N(w)  \,.
\end{equation}
This concludes the proof when $x,y$ are at distance $2$ from each other.

\paragraph{When $x,y$ are at an even distance from each other.}

We can now easily extend~\eqref{eq:lastdisplay1} to the case where $x,y$ are at an even distance from each other.
There exists a constant $C$ such that, for $z\in \Omega_N$ with $d(z,\Omega_N)$ large enough, letting $R=\frac{1}{2} d(z,\partial \Omega_N) \wedge (\delta_{\Omega} N)$ as above, we have that for any $x,y \in B(z,\frac12 R)$ at an even distance from each other
\begin{equation}
\label{lastdisplay}
 \big| \phi_N(y)-\phi_N(x) \big| \leq \frac{C}{R} \,  d(x,y) \, \sup_{w\in \partial B(z,R)} \phi_N(w)   \,.
\end{equation}
Indeed, one simply uses~\eqref{eq:lastdisplay1} together with the triangular inequality, using also that we have $d(w,\partial \Omega_N) \geq \frac14 d(z,\partial \Omega_N)$ for any $w\in B(z,\frac12 R)$ in a geodesic path from $x$ to $y$. 

\paragraph{When $x,y$ are at an odd distance from each other.}

We now turn to the ``odd'' case, which cannot directly be dealt with the mirror coupling.
The idea is to work with the so-called \emph{lazy} random walk: fix a laziness parameter $q \in (0,1/2]$, and consider the $q$-lazy random walk instead of a simple random walk --- we use the laziness of the walk to fall back on the case where starting points are at some even distance from each other.
Note that the $q$-lazy random walk killed upon exiting $\Omega_N$ has transition transition matrix $Q_N = q I + (1-q) P_N$; in other words, $Q_N(x,x)=q$ and $Q_N(x,y) = \frac{1-q}{2d}$ for $x\sim y$ with $x,y \in \Omega_N$.
One can readily see that $Q_N \phi_N = \lambda_N^{(q)}\phi_N$ with $\lambda_N^{(q)} = q+(1-q)\lambda_N$, so that $\phi_N$ is again the principal eigenvector of $Q_N$, with associated eigenvalue that verifies $1-\lambda_N^{(q)} = (1-q)(1-\lambda_N)$.

Let $z \in \Omega_N$ be such that $d(z,\Omega_N)$ is large enough, and let $R=\frac{1}{2} d(z,\partial \Omega_N) \wedge (\delta_{\Omega} N)$ as above, and let $x,y\in B(z,\frac14 R)$ with $|x-y|_1 \in 2\ZZ+1$.
Then, as in~\eqref{eq:diff-phi-diff-esp}, we obtain that
\begin{equation}
	\label{eq:diff-phi-diff-esp2}
		\big| \phi_N(x) - \phi_N(y) \big| \leq  \bigg|\Ebf_{x}^{(q)}\Big[ (\lambda_N^{(q)})^{-H_R}\, \phi_N\big(X_{H_R} \big) \Big] - \Ebf_{y}^{(q)}\Big[ (\lambda_N^{(q)})^{-H_R} \, \phi_N\big(X_{H_R} \big) \Big] \bigg| \, ,
\end{equation}
where here $(X_n)_{n\geq 0}$ is the $q$-lazy random walk, whose distribution is denoted~$\bP^{(q)}$.

We now introduce a coupling $\bP_{x,y}^{(q)}$ of two $q$-lazy walks $X^{(1)},X^{(2)}$ starting from $x,y$ respectively, in two steps: 
\begin{enumerate}[label=(\roman*)]
	\item First, we let $(\xi_i^{(1)})_{i\geq 1}$, $(\xi_{i}^{(2)})_{i\geq 1}$ be independent i.i.d.\ sequences of Bernoulli random variables of parameter $1-q$ and we set $T\defeq\min\{i\geq 1, \xi_i^{(1)}\neq\xi_i^{(2)}\}$. 
	Up until time $T$, we set $X_n^{(1)} = x +\sum_{i=1}^{n} \xi_i^{(1)} U_i$ and $X_n^{(2)}=y+\sum_{i=1}^{n} \xi_i^{(2)} U_i$, using the same steps $U_i$ (uniform in $\{\pm e_i, 1\leq i \leq d\}$) for the two walks; in other words, the two walks evolve in parallel until one stays still ($\xi=0$) but not the other ($\xi=1$).
	
	\item At time $T$, $X_T^{(1)}$ and $X_T^{(2)}$ are at an even distance one from the other: we then perform a mirror coupling with respect to the mediator hyperplane $\mathcal{H} = \mathcal{H}_{X_T^{(1)},X_T^{(2)}}$ between $X_T^{(1)},X_T^{(2)}$.
	Note that we might have $X_T^{(1)}=X_T^{(2)}$, in which case the two walks are coupled starting from time $T$.
\end{enumerate}

\noindent
Now, we can decompose~\eqref{eq:diff-phi-diff-esp2} according to whether $T\leq R/8$ or not: we have that 
\begin{multline}
	\label{withT}
	\big| \phi_N(x) - \phi_N(y) \big| \leq  \bigg| \Ebf_{x,y}^{(q)}\bigg[ \Big((\lambda_N^{(q)})^{-H_R^1}\, \phi_N\big(X_{H_R^1}^{(1)} \big)  - (\lambda_N^{(q)})^{-H_R^2}\phi_N\big(X_{H_R^2}^{(2)} \big) \Big) \ind_{\{T \leq  R/8\}}\bigg]  \bigg| \\
	+  \Ebf_{x,y}^{(q)}\bigg[ \Big|(\lambda_N^{(q)})^{-H_R^1}\, \phi_N\big(X_{H_R^1}^{(1)} \big)  - (\lambda_N^{(q)})^{-H_R^2}\phi_N\big(X_{H_R^2}^{(2)} \big) \Big| \ind_{\{T > R/8\}} \bigg] \,.
\end{multline}

For the first term in~\eqref{withT}, notice that $T<\min\{H_R^1,H_R^2\}$ (in fact, $X_T^{(1)},X_T^{(2)} \in B(z,R/2)$), so that applying the strong Markov property at time $T$, we have that it is equal to
\begin{multline*}
	\bigg| \Ebf_{x,y}^{(q)}\bigg[(\lambda_N)^{-T} \ind_{\{T \leq  R/8\}} \Ebf_{X_T^{(1)},X_T^{(2)}}^{(q)} \Big[ (\lambda_N^{(q)})^{-H_R^1}\, \phi_N\big(X_{H_R^1}^{(1)} \big)  - (\lambda_N^{(q)})^{-H_R^2}\phi_N\big(X_{H_R^2}^{(2)} \big)\Big] \bigg]  \bigg|\\
	\leq \Ebf_{x,y}^{(q)}\Big[(\lambda_N)^{-R/8}  \big|\phi_N(X_T^{(1)}) - \phi_N(X_T^{(2)})\big|\Big] \leq \frac{4 C}{R} \,  d(x,y) \, \sup_{w\in B(z,R)} \phi_N(w) \,.
\end{multline*}
For the last inequality, we have used that $(\lambda_N^{(q)})^{-R/8} \leq (\lambda_N^{(q)})^{- c N^2}\leq C'$ on one hand and~\eqref{lastdisplay} on the other hand (together with the fact that $d(X_T^{(1)},X_T^{(2)}) \leq d(x,y)+1 \leq 2 d(x,y)$, by construction).

For the second term in~\eqref{withT}, we bound $\phi_N(X_{H_R^i}^{(i)})$ by \(\sup_{w\in B(z,R)} \phi_N(w)\), and we apply the Markov property at time $R/8$: we get that this second term is bounded by
\[
	\sup_{w\in \partial B(z,R)} \phi_N(w) \times \Ebf_{x,y}^{(q)}\bigg[   (\lambda_N)^{-R/8} \ind_{\{T > R/8\}}  \Big( \bE_{X_{R/8}^{(1)}}\big[(\lambda_N^{(q)})^{-H_R} \big] + \bE_{X_{R/8}^{(2)}}\big[(\lambda_N^{(q)})^{-H_R} \big] \Big) \bigg] \,.
\]
Note that, as above, $(\lambda_N)^{-R/8} \leq (\lambda_N)^{-c N^2}$ is bounded by a constant.
For the remaining terms, we can use the following lemma, which is classical (we provide a short proof in Section~\ref{sec:lemsortie}).
\begin{lemma}[Exit time]
\label{lem:sortie-boule-lambda}
There is a constant $c_d > 0$ (that depends only on the dimension) and a constant $C >0$ such that, for any $ R\geq 1$ sufficiently large, we have
\begin{equation*}
		\sup_{u \in B(0, R)} \Ebf_u \Big[ e^{c_d H_R/R^2} \Big] \leq C \, ,
\end{equation*}
where $H_{R}$ is the hitting time of the ball $B(0,R)$.
\end{lemma}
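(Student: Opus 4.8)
\textbf{Proof plan for Lemmas~\ref{lem:eviter-hyperplan-ruine} and~\ref{lem:sortie-boule-lambda}.}
The plan is to treat both estimates by the same mechanism: since $H_R\le$ (exit time of $B(0,R)$), and $\lambda_N^{-1}=1+O(N^{-2})$ with $R\le \delta_D N$, the weight $(\lambda_N)^{-H_R}=e^{c_d H_R/R^2\cdot O(R^2/N^2)}$ is harmless provided we can control $\Ebf_0[e^{c_d H_R/R^2}\,;\,A]$ for the relevant event $A$ with a \emph{small enough} constant $c_d$; so the real content is an exponential-moment gambler's-ruin estimate. First I would recall the standard fact that the exit time $T_R$ of $B(0,R)$ by the SRW has Gaussian-type tails uniformly in the starting point: there exist $a,c>0$ (depending only on $d$) with $\sup_{u\in B(0,R)}\Pbf_u(T_R> t)\le c\,e^{-a t/R^2}$ for all $t$, which follows from the invariance principle together with a subadditivity/iteration argument over time windows of length $R^2$ (as in Remark~\ref{rem:eigenball}), or directly from \cite[Chap.~6.9]{lawlerRandomWalkModern2010}. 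Consequently $\Ebf_u[e^{c_d H_R/R^2}]<\infty$ and is bounded uniformly in $u$ and $R$ as soon as $c_d<a$; fix such a $c_d$ once and for all.

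For Lemma~\ref{lem:eviter-hyperplan-ruine}, the event $\{\tau_\mathcal{H}>H_R\}$ forces the walk to exit $B(0,R)$ before ever touching the hyperplane $\{z_1=-1\}$, i.e.\ it stays in the half-space $\{z_1\ge 0\}$ up to time $H_R$. The plan is to split on the exit time at scale $R^2$: write $\Ebf_0[e^{c_d H_R/R^2}\indic{\tau_\mathcal{H}>H_R}]=\sum_{k\ge 1}\Ebf_0[e^{c_d H_R/R^2}\indic{\tau_\mathcal{H}>H_R}\indic{H_R\in((k-1)R^2,kR^2]}]$, bound the exponential by $e^{c_d k}$, and use Cauchy--Schwarz together with the Gaussian tail $\Pbf_0(H_R>(k-1)R^2)\le c e^{-a(k-1)}$ to absorb the sum, reducing everything (up to a constant) to $\Pbf_0(\tau_\mathcal{H}>H_R)$, or rather to $\sup_k e^{c_d k}\Pbf_0(\tau_\mathcal{H}>H_R\wedge kR^2)$. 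The one-dimensional projection $z_1(X_n)$ is a lazy simple random walk started at $0$, and $\{\tau_\mathcal{H}>H_R\}\subset\{z_1(X_n)\ge 0\text{ for }n\le H_R\}$; a classical gambler's-ruin estimate for the one-dimensional lazy walk gives $\Pbf_0(\text{stay in }[0,R)\text{ until reaching }\{-1\}\text{ or }\{R\})\le C/R$, and combining with the uniform exponential control of $H_R$ yields the $C/R$ bound. The slight subtlety — that one must couple the ``stay positive until time $kR^2$'' probabilities with the Gaussian time tail so that the $k$-sum converges — is exactly why we need $c_d$ strictly below the tail rate $a$.

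For Lemma~\ref{lem:sortie-boule-lambda}, the same time-slicing reduces the claim (up to a constant depending on $c_d<a$) to the statement $\sup_{u\in\bar B(0,R/16)}\Pbf_u(X_{H_R}=z)\le C R^{-(d-1)}$, i.e.\ a uniform upper bound on the harmonic measure of a boundary point of $B(0,R)$ as seen from well inside the ball. This is standard: by the strong Markov property it suffices to bound the Poisson kernel (discrete harmonic measure) of the ball, which is comparable to $R^{-(d-1)}$ uniformly for the pole in the inner region, e.g.\ via the explicit asymptotics of the ball's Poisson kernel in \cite[Chap.~8]{lawlerRandomWalkModern2010} (discrete Green's function estimates and the last-exit decomposition), using that $|\partial B(0,R)|\asymp R^{d-1}$ and that the harmonic measure is essentially uniform at scale $R^{-(d-1)}$ for starting points at distance $\ge R/16$ from the boundary.

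\textbf{Main obstacle.} The genuinely delicate point is not the random-walk geometry — gambler's ruin and harmonic-measure bounds are classical — but the bookkeeping that lets the \emph{exponential} weight $(\lambda_N)^{-H_R}=e^{O(H_R/N^2)}$ be swallowed. Because $R$ may be as large as $\delta_D N$, this is $e^{O(H_R/R^2)\cdot O((\delta_D)^2)}$ in the worst case, so one must (i) make sure the exponential rate $c_d$ one can afford is dictated purely by the ball's exit-time tail rate $a=a(d)$ and \emph{not} by $\delta_D$ (choosing $\delta_D$ small if needed so that $c_d\cdot(\text{const})<a$), and (ii) carry the $e^{c_d H_R/R^2}$ factor through the two successive applications of the strong Markov property (at $H_{\tilde R}^x$ and then on $\partial B(x,\tilde R)$) in Step~2 without losing the summability in the time-slicing. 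Once the exponential moments $\Ebf_u[e^{c_d H_R/R^2}]$ are known to be uniformly bounded, both lemmas follow from the classical polynomial estimates by a routine Cauchy--Schwarz argument.
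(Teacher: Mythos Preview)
Your plan for Lemma~\ref{lem:sortie-boule-lambda} contains a genuine gap: the time-slicing plus ``routine Cauchy--Schwarz'' does \emph{not} reduce the claim to the plain harmonic-measure bound $\Pbf_u(X_{H_R}=z)\le CR^{-(d-1)}$. If you bound each term $\Pbf_u(X_{H_R}=z,\, H_R>(k-1)R^2)$ by Cauchy--Schwarz you obtain at best $\Pbf_u(X_{H_R}=z)^{1/2}\Pbf_u(H_R>(k-1)R^2)^{1/2}$, and after summing in $k$ you are left with $R^{-(d-1)/2}$ --- but the full exponent $d-1$ is exactly what is needed downstream (the sum over $z\in\partial B(x,R)$ in~\eqref{eq:diff-final} has $\asymp R^{d-1}$ terms). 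If instead you apply the Markov property at the \emph{fixed} time $(k-1)R^2$, the walk lands at an arbitrary $w\in B_R$, possibly adjacent to $z$, and $\sup_{w\in B_R}\Pbf_w(X_{H_R}=z)$ is of order~$1$, not $R^{-(d-1)}$. The two events $\{X_{H_R}=z\}$ and $\{H_R>kR^2\}$ therefore cannot be decoupled in either of these naive ways while preserving the correct power of $R$.

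The paper's proof handles this by applying the strong Markov property at the stopping time $H_{R/4}$ (so that the harmonic-measure bound~\eqref{eq:SRW-sortie-boule} applies from $\partial B_{R/4}$), and then controlling the residual quantity $\sum_{v\in\partial B_{R/4}}\Pbf_v(X_{H_R}=z,\,H_R\ge kR^2)$ by \emph{time-reversal}: this sum becomes $\sum_{i\ge kR^2}\Pbf_z(\bar H_R\ge i,\,X_i\in\partial B_{R/4})$, which is bounded via the Green's-function estimate $G_R(\cdot,\partial B_{R/4})\le c'R$ together with the boundary estimate $\Pbf_z(\bar H_R\ge kR^2)\le cR^{-1}(\gamma_d)^{k-1}$. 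The extra factor $R^{-1}$, coming from the fact that $z$ sits on $\partial B_R$, is precisely what cancels the factor $R$ from the Green's function and restores the full $R^{-(d-1)}$. This reversibility-plus-Green's-function step is the idea missing from your proposal.
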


Using that $(\lambda_N)^{-H_R} \leq e^{c_{\Omega} \delta_{\Omega}^2 H_R/R^2}$, this lemma shows that $\sup_{u\in B(z,R)} \Ebf_{u}\big[ \lambda_N^{-H_{R}} \big]$ is bounded by a constant, provided that $\delta_{\Omega}$ has been fixed small enough.
All together, we get that the second term in~\eqref{withT} is bounded by a constant times 
\[
\sup_{w\in B(z,R)} \phi_N(w) \times  \Pbf_{x,y}^{(q)}\big( T>R/8 \big) \leq   \frac{C}{R} \sup_{w\in B(z,R)} \phi_N(w)  \,,
\]
using also that $T$ is a geometric random variable with parameter $\theta = 2 q(1-q) <1$, so that $\bP(T>R/8)=\theta^{R/8} \leq c R^{-1}$. 
Combining all together inside~\eqref{withT}, this gives that for any $x,y\in B(z,R/4)$,
\[
|\phi_N(x)-\phi_N(y)| \leq \frac{C d(x,y)}{R} \sup_{w\in B(z,R)} \phi_N(w) \,,
\]
as desired.
\qed

\subsection{Brownian motion and higher order derivatives, multi-mirror coupling}
\label{sec:BM-couplings}

In this section, we adapt the proof to the continuous setting, and we prove the higher order derivative estimates in the continuous setting of Theorem~\ref{th:reg-cont-cone}.
One simply has to work with the Brownian motion instead of the simple random walk, with actually fewer technical difficulties (in particular, there is no parity issue); in the following, $(X_s)_{s\geq 0}$ denotes a $d$-dimensional standard Brownian motion, and we use similar notation as for the simple random walk to simplify the exposition.

As a warm-up, let us first explain how to adapt the proof of Theorem~\ref{thm:reg-cone}. We then turn to higher-order differences, where the coupling gets a bit more involved.

\paragraph*{Single differences, simple mirror coupling}

We let $z \in \Omega$ and $R \leq \frac12 d(z,\partial \Omega)\wedge \delta$. 
We consider $H_R$ the hitting time of $\partial B(0,R)$ by the Brownian motion $(z+X_s)_{s\geq 0}$,  and applying~\eqref{eq:Feynman} similarly as in~\eqref{eq:sum=1}, we get the following: for any $x,y \in B(0,R)$,
\begin{equation}
	\label{eq:sum=1-conti}
	|\varphi_1(x)- \varphi_1(y)| =  \bigg| \bE_x \Big[ e^{\mu_1 H_R} \varphi_1(z+X_{H_R}) \Big]  - \bE_{y} \Big[ e^{\mu_1 H_R} \varphi_1(z+X_{H_R})  \Big]  \bigg| \,,
\end{equation}
analogously to~\eqref{eq:diff-phi-diff-esp}.
Here, there is no periodicity issue, and we can directly define the mirror coupling: we let $\mathcal{H}$ be the mediator hyperplane between $x$ and $y$ and we define $X^{(1)}$, $X^{(2)}$ two mirror Brownian motion with respect to $\mathcal{H}$ until time $\tau \defeq \inf\{t>0, X_t^{(1)}=X_t^{(2)}\}$ (which is the hitting time of $\mathcal{H}$ for $X^{(1)}$ and $X^{(2)}$); then $X_t^{(1)} = X_t^{(2)}$ for any $t\geq \tau$.
Under this coupling, whose law is denoted by $\bP_{x,y}$, we have that $\varphi_1(X_{H_R^1}^{(1)}) = \varphi_1(X_{H_R^2}^{(2)})$ on the event $\{\tau < H_{R}\}$.

Then, similarly to~\eqref{eq:diff-final}, using translation and rotation invariance, we obtain that
\[
	|\varphi_1(x)- \varphi_1(y)| \leq  \sup_{w\in B(z,R)} \varphi_1(w) \times  \Big( \bE_x \Big[ e^{\mu_1 H_R} \ind_{\{\tau >H_R\}} \Big] +\bE_y \Big[ e^{\mu_1 H_R} \ind_{\{\tau >H_R\}} \Big]  \Big)\,.
\]

Now, we use the following result, analogous to Lemmas~\ref{lem:eviter-hyperplan-ruine}-\ref{lem:sortie-boule-lambda}, whose proof is identical (actually simpler) as for the random walk.
For any $\mu>0$, there is a constant $C>0$ and some $\delta>0$ such that, for any $R \in (0,\delta]$ and any $x\in B(0,R)$, 
\begin{equation}
	\label{eq:lemmas-conti}
		\bE_x\Big[ e^{\mu H_R} \ind_{\{\tau_{\mathcal{H}} > H_R \}} \Big]  \leq C \frac{d(x,\mathcal{H})}{R} \,,\qquad \text{ and } \qquad
		\bE_x \Big[ e^{\mu H_R} \Big]  \leq C \,, 
\end{equation}
where $H_R$ is the hitting time of $\partial B(0,R)$ and $\tau_{\mathcal{H}}$ is the hitting time of~$\mathcal{H} = \{0\}\times \mathbb{R}^{d-1}$.

Using the strong Markov property at the hitting time $H_{R/2}^{w} = H_{\partial B(w,R/2)}$ with $w=\frac{x+y}{2}$, we obtain that
\[
\bE_x \Big[ e^{\mu_1 H_R} \ind_{\{\tau >H_R\}} \Big] \leq \bE_x \Big[ e^{\mu_1 H_{R/2}^{w}} \ind_{\{\tau> H_{R/2}^{w}\}}  \bE_{X_{H_{R/2}^w}} \big[e^{\mu_1 H_R} \big] \Big] 
\leq C^2 \, \frac{d(x,y)}{R} \,,
\]
where we have used~\eqref{eq:lemmas-conti} for the last inequality (using also translation and rotation invariance of the Brownian motion) to bound $\bE_{X_{H_{R/2}^w}} \big[e^{\mu_1 H_R} \big]$ by a constant (uniformly on~$X_{H_{R/2}^w}$), and the fact that $d(x,\mathcal{H}) = \frac12 d(x,y)$.
All together, this gives that
\[
	|\varphi_1(x)- \varphi_1(y)| \leq C \frac{d(x,y)}{r} \sup_{w\in B(z,R)} \varphi_1(w)  \,,
\]
which is the continuous analogous of Theorem~\ref{thm:reg-cone}, \textit{i.e.}\ Theorem~\ref{thm:reg-cone} in the case \(k=1\).

\paragraph*{Higher-order differences, multi-mirror coupling}

Let $z \in \Omega$ and $R\leq \frac12 d(z,\partial \Omega)\wedge \delta$, as above.
Our goal is to show that there exists a constant $C$ (independent of $z$) such that, for any $h$ small enough, for any $i_1,\ldots, i_k\in \{1,\ldots, d\}$, we have that 
\begin{equation}
	\label{goal-kderiv}
	D^{(h)}_{i_1,\ldots, i_k} \varphi_1 (z) \leq  \Big(\frac{C kh}{R}\Big)^{k} \sup_{w\in B(z,R)} \varphi_1(w) \,.
\end{equation}

\smallskip
\noindent
{\it Step 0. Preliminaries. }
Our starting point is the formula~\eqref{derivatives}, which gives that
\[
	(2h)^k D^{(h)}_{i_1,\ldots, i_k} \varphi_1 (z) = \sum_{\alpha \in \{+1,-1\}^k} \sign(\alpha)\, \varphi_1 \big( z + x_{\alpha}\big) \, ,\quad \text{ with } x_{\alpha} = x_{\alpha}^{(h)} = h \sum_{j=1}^k \alpha_j e_{i_j} \,.
\]
We will work with $h$ small enough so that $kh \leq R$, so in particular all points $z+x_{\alpha}$ belong to $B(z,R)$.
Similarly to~\eqref{eq:sum=1-conti}, we therefore can write that 
\begin{equation*}
	(2h)^k D^{(h)}_{i_1,\ldots, i_k} \varphi_1 (z) = \sum_{\alpha \in \{+1,-1\}^k} \sign(\alpha)\, \bE_{x_{\alpha}}\Big[ e^{\mu_1 H_R} \,  \varphi_1\big( z+X_{H_R} \big) \Big] \,,
\end{equation*}
where $H_R$ is the hitting time of $\partial B(0,R)$.
Our goal is now to construct a coupling $\hat \bP$ of~$2^k$ Brownian motions, indexed by $\alpha\in \{-1,+1\}^k$ (we denote them $X^{(\alpha)}$), with respective starting points~$x_{\alpha}$. 
With such a coupling, we can rewrite:
\[
(2h)^k D^{(h)}_{i_1,\ldots, i_k} \varphi_1 (z) =  \hat \bE \bigg[ \sum_{\alpha \in \{+1,-1\}^k} \sign(\alpha) \,  e^{\mu_1 H_R^{\alpha}}\, \varphi_1\big( z+X_{H_R}^{(\alpha)}\big) \bigg] \,,
\]
where we have denoted $H_R^{\alpha}$ the hitting time of $\partial B(0,R)$ by $X^{(\alpha)}$.

\smallskip\noindent
\textit{Step 1. Properties needed for the coupling. }
Let us now comment on the desired properties of the relevant coupling.
Under $\hat \bP$, we want to have a coupling time $\tau$ such that, at time $\tau$ there is a partition of $\{+1,-1\}^k$ into pairs $(\alpha_+,\alpha_-)$ which verify: 
\begin{itemize}[noitemsep]
	\item[(i)]~$X^{(\alpha_+)}_{\tau} = X^{(\alpha_-)}_{\tau}$; 
	\item[(ii)]~$\sign(\alpha_+)\sign(\alpha_-) =-1$.
\end{itemize}

\noindent
Coalescing the pairs $(X^{(\alpha_+)}, X^{(\alpha_-)})$ after time~$\tau$, \textit{i.e.}\ setting $X^{(\alpha_+)}_t=X^{(\alpha_-)}_t$ for $t>\tau$, we then get that on the event $\tau < \min\{ H_R^{\alpha_+},H_R^{\alpha_-} \}$ the two Brownian motions $X^{(\alpha_+)},X^{(\alpha_-)}$ exit the ball $B(0,R)$ at the same time $H_R^{\alpha_+}=H_R^{\alpha_-}$ and at the same point $X_{H_R}^{(\alpha_+)}=X_{H_R}^{(\alpha_-)}$.
Therefore, on the event $\tau < \min\{ H_R^{\alpha_+},H_R^{\alpha_-} \}$, we have that
\[
	\sign(\alpha_+) \, e^{\mu_1 H_R^{\alpha_+}}\, \varphi_1\big( z+X_{H_R}^{(\alpha_+)}\big) + \sign(\alpha_-) \, e^{\mu_1 H_R^{\alpha_-}} \,\varphi_1\big( z+X_{H_R}^{(\alpha_-)}\big)  =0 \,,
\]
the two terms cancelling out since they have an opposite sign ($\sign(\alpha_+)\sign(\alpha_-)=-1$).

All together, with such a coupling, we would get the bound
\begin{multline*}
	(2h)^k \big| D^{(h)}_{i_1,\ldots, i_k} \varphi_1 (z) \big| \leq \sum_{\alpha \in \{+1,-1\}^k} \hat \bE\Big[ e^{\mu_1 H_R^{\alpha}} \varphi_1\big( z+X_{H_R}^{(\alpha)} \big)  \ind_{\{\tau > \min_{\alpha\in \{\pm1\}^k}H_R^{\alpha} \}}\Big] \\
	\leq \sup_{w\in B(z,R)} \varphi_1(w)  \sum_{\alpha \in \{+1,-1\}^k} \hat \bE\Big[ e^{\mu_1 H_R^{\alpha}} \ind_{\{\tau > \min_{\alpha\in \{\pm1\}^k}H_R^{\alpha} \}}\Big] \,.
\end{multline*}
It then remains to show that we can find a coupling that verifies the following property:
\begin{equation}
	\label{k-coupling}
	\hat \bE\Big[ e^{\mu_1 H_R^{\alpha}} \, \ind_{\{\tau > \min_{\alpha\in \{\pm1\}^k}\{H_R^{\alpha}\} \}}\Big] \leq  \Big( \frac{C k h}{R} \Big)^{k} \,.
\end{equation}
Plugging this in the display above would yield \eqref{goal-kderiv} as desired.

\smallskip
\noindent
{\it Step 2. Construction of the coupling. }
We let $\hat \bP$ be the law of $k$ independent standard Brownian motions $\{ W^{(j)} ,1\leq j\leq k\}$ with respective starting points $W^{(j)}_0= k h\,  e_{i_j}$.
We then define a \emph{generalized mirror coupling} (or \emph{multi-mirror coupling}) as follows. 

For any $j$, let us set $W^{(j,+1)} = W^{(j)}$ and $W^{(j,-1)}$ the mirror image of $W^{(j)}$ with respect to the hyperplane $\mathcal{H}_j = \{ (x_1,\ldots x_d) \in \mathbb{R}^d, x_{i_j}=0\}$; note that $W^{(j,-1)}$ starts from $-kh e_{i_j}$.
Then, for $\alpha \in \{-1,+1\}^k$, we set 
\begin{equation}
	\label{def:k-coupling}
	X^{(\alpha)} \defeq \frac{1}{\sqrt{k}} \sum_{j=1}^k W^{(j,\alpha_j)} \,,
\end{equation}
and we notice that under $\hat \bP$ the $(X^{(\alpha)})_{\alpha \in \{\pm 1\}^k}$ are indeed standard Brownian motions, with starting points $X_0^{(\alpha)} = x_{\alpha} = h \sum_{j=1}^k \alpha_j e_{i_j}$.

Define $\tau_j = \inf\{t>0, W^{(j)}_t \in \mathcal{H}_j \} = \inf\{t>0, W^{(j,+1)}_t =W^{(j,-1)}_t\}$ the time at which the $j$-th mirror coupling succeed, and let $\tau = \min\{\tau_j, 1\leq j \leq k\}$.
Now, for any $1\leq j \leq k$, let us define 
\[
	\alpha_\pm^j = (\alpha_1,\ldots, \alpha_{j-1}, \pm 1, \alpha_{j+1}, \ldots, \alpha_k) \,,
\]
so $\alpha_{\pm}^j$ is simply obtained from $\alpha$ by setting $\alpha_j$ to $\pm 1$.
Then for any $j\in \{1,\ldots, k\}$, this provides a ($j$-dependent) partition of $\{-1,+1\}^k$ into pairs $\{\alpha_+^j,\alpha_-^j\}$.
Let us stress right away that we have $\sign(\alpha_+^j) \sign(\alpha_-^j)=-1$, since $\alpha_{\pm}^j$ only differ by one sign (that of~$\alpha_j$).
Then, in the case where $W^{(j,+1)}_t =W^{(j,-1)}_t$, we have that for any $\alpha \in \{-1,+1\}^k$
\[
	X^{(\alpha_+^j)}_t = \frac{1}{\sqrt{k}} \sum_{j'\neq j} \alpha_j' W_t^{j,\alpha_{j'}} + \frac{1}{\sqrt{k}} W^{(j,+1)}_t = \frac{1}{\sqrt{k}} \sum_{j'\neq j} \alpha_j' W_t^{j,\alpha_{j'}} + \frac{1}{\sqrt{k}} W^{(j,-1)}_t = X^{(\alpha_-^j)}_t \,.
\]
Therefore, at time $\tau$, one can find some $j \in \{1,\ldots, k\}$ (the index of the successful coupling) and an associated partition $\{\alpha_+^j,\alpha_-^j\}$ which satisfies our requirements (i)-(ii) for the coupling. 
Note in fact that the partition depends on the index of the mirror coupling which succeed.

\smallskip
\noindent
{\it Step 3. Proof of~\eqref{k-coupling}. }
First of all, let us introduce 
\[
	T_R^{j} = \inf\big\{t>0, |W_t^{(j)}| =R \big\}\quad \text{ and } T \defeq \min_{1\leq j \leq k} T_R^{j} \,.
\]
Notice that, under the coupling $\hat\bP$, we have $|X_t^{(\alpha)}| \leq \frac1k \sum_{j=1}^k |W_t^{(j)}|$ so $|X_t^{(\alpha)}|< R$ for all $t< T$.
Therefore, we have that $T\leq \min_{\alpha\in \{\pm1\}^k}H_R^{\alpha}$, and we can bound the left-hand-side of~\eqref{k-coupling} by
\[
\hat \bE\Big[ e^{\mu_1 H_R^{\alpha}}\, \ind_{\{\tau>T \}}\Big] 
\leq \hat \bE\Big[   e^{\mu_1 T}\, \ind_{\{\tau>T\}}  \bE_{X_T^{(\alpha)}} \big[ e^{\mu_1 H_R^{\alpha}} \big] \Big] \,, 
\]
where we have used the strong Markov property at time $T\leq H_R^{\alpha}$ for the second inequality.
Then, we can use the second inequality in~\eqref{eq:lemmas-conti} with $X^{(\alpha)}$, to get that $\bE_{X_T^{(\alpha)}} \big[ e^{\mu_1 H_R^{\alpha}} \big]$ is bounded by a constant.
For the remaining term, writing that $T\leq \sum_{i=1}^k T_R^{j}$, we get that  
\[
\hat \bE\big[ e^{\mu_1 T} \ind_{\{\tau>T\}} \big] 
\leq \hat \bE\Big[ \prod_{j=1}^k e^{\mu_1 T_R^{j}} \ind_{\{\tau_j>T_R^{j}\}} \Big] =  \prod_{j=1}^k \hat \bE\Big[ e^{\mu_1 T_R^{j}} \ind_{\{\tau_j>T_R^{j}\}} \Big]\,,
\]
using that the $W^{(j)}$ are independent Brownian motions under $\hat \bP$.
Since the starting point of~$W^{(j)}$ is $k h\, e_{i_j}$, we can apply~\eqref{eq:lemmas-conti} to bound each term in the product by $C \frac{kh}{R}$, which concludes the proof of~\eqref{k-coupling}.
\qed

\subsection{Higher order differences in the discrete setting: Theorem~\ref{thm:higherorder-cone}}
\label{sec:rw-higher}

Let us stress that one can repeat the same argument as in Section~\ref{sec:BM-couplings} in the discrete setting, for the simple random walk.
Let us explain how the coupling works and what the differences with Section~\ref{sec:srw-couplings} are.
In the following, let $z\in \Omega_N$ with $d(z,\partial \Omega_N) \geq 4k$ and denote $R\leq \frac12 d(z,\partial \Omega_N) \wedge \delta$ (with \(R\geq 2k\)).

Consider the following coupling $\hat \bP$ of $2^k$ random walks.
Let $(S^{(j)})_{1\leq j \leq k}$ be $k$ independent simple random walks with starting points $S_0^{(j)} = e_{i_j}$, and let $S^{(j,+)}=S^{(j)}$ and $S^{(j,-)}$ the mirror image of $S^{(j)}$ with respect to the hyperplane $\mathcal{H}_{j} =  \{ (x_1,\ldots x_d) \in \mathbb{Z}^d, x_{i_j}=0\}$.
Then, for $\alpha \in \{+1,-1\}^k$, we define
\[
Y^{(\alpha)} = \sum_{j=1}^k S^{(j,\alpha_j)} \,.
\]
(Note that, contrary to~\eqref{def:k-coupling}, we do not divide by $k$, in order to keep $\mathbb{Z}^d$-valued random walks.)
Now, the starting points of $Y^{(\alpha)}$ are indeed $x_{\alpha} = \sum_{j=1}^k \alpha_j e_{i_j}$, but $(Y_n^{(\alpha)})_{n\geq 0}$ are \textit{not} simple random walks: they still are random walks, but with steps distributed as $V = U_1+\cdots+U_k$ with $(U_i)_{1\leq i\leq k}$ simple random walk steps (\textit{i.e.}\ independent random variables uniform in $\{\pm e_i, 1\leq i \leq d\}$).
In other words, we can write $(Y_n^{(\alpha)})_{n\geq 0} \stackrel{(d)}{=} (X_{kn}^{(\alpha)})_{n\geq 0}$ where $(X_n^{(\alpha)})$ is a simple random walk.

This is however not problematic for our purposes: the random walk $(Y_n)_{n\geq 0} \defeq (X_{kn})_{n\geq 0}$ have transition matrix $P_N^k$, so in fact $\phi_N$ is still its principal eigenfunction, but with associated eigenfunction $\lambda_N^k$.
Similarly to~\eqref{eq:sum=1}, we can therefore write for any \(x\in B(0,R)\),
\[
\phi_N(x) = \bE_x\Big[ (\lambda_N)^{-k\tilde H_{R}} \phi_N\big(z+Y_{\tilde H_R}\big) \Big] \,,
\]
where $\tilde H_{R}$ is the exit time of $B(0,R)$ by $z+Y$.

Then, under the coupling $\hat \bP$, the $k$-th order difference can be rewritten as
\begin{equation}
	\label{eq:k-coupling-srw}
	D_{i_1,\ldots, i_k} \phi_N(z) = \hat \bE\bigg[ \frac{1}{2^k}\sum_{\alpha\in \{+1,-1\}^k} \sign(\alpha) \, (\lambda_N)^{-k \tilde H_{R}^{\alpha}}\, \phi_N\big(z + Y_{\tilde H_R}^{(\alpha)}\big) \bigg] \,,
\end{equation}
with the obvious notation that $\tilde H_R^{\alpha}$ is the exit time of $B(0,R)$ by $z+Y^{(\alpha)}$.
As in the continuous setting, letting $\tau_j \defeq \min\{n\geq 0, S^{(j)}_n  \in \mathcal{H}_j \}$ and $\tau = \min\{\tau_j, 1\leq j \leq k\}$, we notice that at the time $\tau$ we can partition $\{\pm 1\}^k$ into pairs $\{\alpha_+,\alpha_-\}$ with $Y^{(\alpha_+)}_{\tau}=Y^{(\alpha_-)}_{\tau}$ and $\sign(\alpha_+)\sign(\alpha_-)=-1$, similarly as in the continuous setting.
Hence, coalescing the pairs $(Y^{(\alpha_+)},Y^{(\alpha_-)})$ after time $\tau$, we get that, on the event $\tau \leq \min_{\alpha\in \{\pm 1\}} \tilde H_R^{\alpha} $, all the terms in the sum cancel out.

We therefore end up with 
\begin{multline*}
	\big| D_{i_1,\ldots, i_k} \phi_N(z) \big| \leq   \frac{1}{2^k}\sum_{\alpha\in \{+1,-1\}^k}  \hat \bE\bigg[(\lambda_N)^{-k \tilde H_{R}^{\alpha}}\, \phi_N(z + Y_{\tilde H_R}) \ind_{\{\tau> \min_{\alpha\in \{\pm\}^k}\{\tilde H_R^{\alpha}\}\}}\bigg] \\
	\leq \frac{C}{2^k}\,   \sup_{w\in B(z,R)} \phi_N(w) \sum_{\alpha\in \{+1,-1\}^k}  \hat \bE\Big[(\lambda_N)^{-k \tilde H_{R}^{\alpha}}  \ind_{\{\tau>T\}}\Big] \,.
\end{multline*}
We have also used that $\tilde H_R^{\alpha} \geq T \defeq \min_{1\leq j\leq 1}T^{j}_{R/k}$ where we have defined $T^{j}_{R/k} = \min\{n\geq 0, S_n^{(j)} \notin B(0, R/k)\}$.
Applying the strong Markov property at time $T \leq \min_{\alpha \in \{\pm1\}^k} \tilde H_{R}^{\alpha}$, we get that 
\[
\begin{split}
	\hat \bE\Big[(\lambda_N)^{-k \tilde H_{R}^{\alpha}}  \ind_{\{\tau>T\}}\Big] & 
	= \hat \bE\Big[ (\lambda_N)^{-k T} \ind_{\{\tau>T\}}  \bE_{Y_T}\big[ (\lambda_N)^{-k \tilde H_{R}^{\alpha}} \big]  \Big] \\
	& \leq C \prod_{j=1}^k \bE\Big[ (\lambda_N)^{-k T_{R/k}^j} \ind_{\{\tau_j>T_{R/k}^j\}} \Big]\,,	
\end{split}
\]
where we have used Lemma~\ref{lem:sortie-boule-lambda} to bound the internal expectation by a constant (it easily adapts to the $k$-step random walk) and then the fact that $T\leq \sum_{j=1}^k T_{R/k}^j$ with the independence of the walks $(S^{(j)})_{1\leq j \leq k}$.
Now, we are left with estimates on the simple random walk $S^{(j)}$: thanks to Lemma~\ref{lem:eviter-hyperplan-ruine} we obtain
\[
\bE\Big[ (\lambda_N)^{-k T_{R/k}^j}\, \ind_{\{\tau_j>T_{R/k}^j\}} \Big] \leq \frac{C}{R/k} \,,
\]
using that $\lambda_N^{- k} \leq e^{c k/N^2} \leq e^{c_d (k/R)^2}$ since $R^2/N^2\leq c_d$, provided that $\delta$ is chosen small enough; note that we also used that $R/k\geq 2$.

All together, this proves that $| D_{i_1,\ldots, i_k} \phi_N(z)| \leq   \big(\frac{Ck}{R}\big)^k \sup_{w\in B(z,R)} \phi_N(w)$, as desired.
This concludes the proof of Theorem~\ref{thm:higherorder-cone}.
\qed

\begin{remark}[About directional $k$-th order differences]
	\label{rem:directional}
In the above, we only dealt with \textit{symmetric} differences defined in~\eqref{def:Di}-\eqref{derivatives}.
As noticed in Remark~\ref{rem:directional1}, we could also consider \textit{directional} differences $D_{i^+},D_{i^-}$.
Then, one can obtain a formula analogous to~\eqref{derivatives} for the higher order differences $D_{i_1^{\gep_1},\ldots, i_k^{\gep_k}}$, namely
\begin{equation}
	\label{derivatives2}
	D_{i_1^{\gep_1},\ldots, i_k^{\gep_k}} \psi (x) = \sum_{\alpha \in \{0,1\}^k} \sign(\alpha,\gep) \psi \Big( x + \sum_{j=1}^k \gep_j \alpha_j e_{i_j} \Big) \,,
\end{equation}
where $\sign(\alpha,\gep) = (-1)^m$ with $m$ the number of $j\in \{1,\ldots,k\}$ such that $\alpha_j=0,\gep_j=1$ or $\alpha_j=1,\gep_j=-1$.
One could still apply an identity of the type~\eqref{eq:k-coupling-srw}, but the difference here lies in the fact that the starting points of the different random walks $Y^{(\alpha)}$ should now be $x_{\alpha} =\sum_{j=1}^k \gep_j \alpha_j e_{i_j}$, which are not at distance $2$ from each other --- hence the mirror coupling of the random walks $(S^{(j)})_{1\leq j \leq k}$ does not fully work.

In order to circumvent this, one needs to work with lazy random walks, as in the last paragraph of Section~\ref{sec:srw-couplings}. 
The idea is to construct a coupling of $q$-lazy random walks $(S^{(j,+)},S^{(j,-)})_{1\leq j\leq k}$ which start respectively from $e_{i_j}$ and $0$, in two steps: first, let the random walks evolve in parallel until they all verify $S^{(j,+)}-S^{(j,-)} = 2 e_{i_j}$ (this takes a geometric number of random walk steps); after this, use the generalized mirror coupling described above. 
We do not write the details of this coupling and of the proof since it follows from straightforward adaptation of the above and do not bring much insight, but let us state the result that one would obtain.
There is a constant $C>0$ such that, for any $k\geq 1$ and any $i_1,\ldots, i_k \in \{1,\ldots, d\}$ and $\gep_1,\ldots, \gep_k \in \{\pm1\}^k$, for any $x\in \Omega_N$ with $d(x,\Omega_N) \geq 4 k$, and any \(R\leq \frac12 d(x,\partial \Omega_N)\) with \(R\geq 2k\), we have
\begin{equation}
	\label{eq:directionaldiff}
	\big| D_{i_1^{\gep_1},\ldots, i_k^{\gep_k}} \phi_N (x) \big| \leq \Big(\frac{Ck}{R}\Big)^k \sup_{w\in B(z,R)} \phi_N(w)\,.
\end{equation}
This is the analogue of Theorem~\ref{thm:higherorder-cone} for directional differences.
\end{remark}

\section{Simple random walk (and Brownian motion) estimates}
	\label{sec:rw}
	
The main goal of this section is to prove Lemmas~\ref{lem:eviter-hyperplan-ruine} and \ref{lem:sortie-boule-lambda}, but we start with the proof of Lemma~\ref{lem:sortie-ruine-joueur}, which is a classical gambler's ruin estimate.
We focus on the estimates for simple random walks, since the estimates for the Brownian motion (see~\eqref{eq:lemmas-conti}) are identical  (in fact, proofs are simpler).
We use the notation $B_R\defeq B(0,R)$ and we will denote $\hat H_R \defeq \min\{n \geq 0, X_n \in B_R\}$ and $\check H_R \defeq \min\{n \geq 0, X_n \notin B_R\}$, or simply $H_R = \check H_R$ if the random walk starts inside $B_R$ (according to the previous notation).

	\subsection{Gambler's ruin and escaping from large balls}
	\label{sec:ruine-joueur}
	
Let us first give a technical lemma on gambler's ruin probabilities, that we mostly deduce from well-known results (our key reference is~\cite{lawler2013intersections}).

\begin{lemma}
\label{lem:arg-martingale+survie-anneau}
	Fix $\alpha > 1$. There are constants $c_1,c_2$ (depending on $\alpha-1$), such that for all $R$ large enough, for all $x \in B_{\alpha R}\setminus B_R$, we have
	\begin{equation}
	\label{eq:ruine-joueur}
		\Pbf_x \big( \hat H_{R}  > \check H_{\alpha R} \big) \leq c_1 \frac{d(x,B_R)}{R}\,  \,,
	\end{equation}
and also
	\begin{equation}
	\label{eq:survie-proche-anneau}
	 \Pbf_x \big( \hat H_{R} \wedge \check H_{\alpha R} \geq  R^2 \big) \leq c_2 \frac{d(x,B_R)}{R} \, .
	\end{equation}
\end{lemma}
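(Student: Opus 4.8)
\textbf{Proof plan for Lemma~\ref{lem:arg-martingale+survie-anneau}.}

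The plan is to deduce both estimates from classical one-dimensional gambler's ruin estimates, using the radial-type martingale structure of the simple random walk. For \eqref{eq:ruine-joueur}, the idea is to compare the event $\{\hat H_R > \check H_{\alpha R}\}$ — that the walk, started in the annulus $B_{\alpha R}\setminus B_R$, exits the annulus through the outer sphere before touching the inner ball — with the corresponding event for a one-dimensional walk. The cleanest way is to use the function $x\mapsto a(x)$ where $a$ is the potential kernel (for $d=2$) or $x \mapsto |x|^{2-d}$ (for $d\geq 3$); these are harmonic outside the origin, hence $a(X_{n\wedge \hat H_R \wedge \check H_{\alpha R}})$ is a martingale, and optional stopping gives $\Pbf_x(\check H_{\alpha R} < \hat H_R)$ as a ratio of the form $\frac{a(x) - a(R)}{a(\alpha R) - a(R)}$ up to boundary corrections of order $O(1)$ (since $|X|$ jumps by at most $1$, the overshoot is controlled). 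Expanding $a$ near radius $R$ shows $a(x) - a(R) = O\big(\frac{d(x,B_R)}{R} (a(\alpha R)-a(R))\big)$ when $d(x,B_R)\leq (\alpha-1)R$, which is exactly \eqref{eq:ruine-joueur}; I would cite \cite{lawler2013intersections} for the requisite estimates on $a$ and on overshoots. Alternatively, and perhaps more in the spirit of ``simple probabilistic ideas'', one projects onto the radial coordinate: $|X_n|$ is a submartingale that behaves, away from the origin, like a one-dimensional walk with a small inward drift $O(1/|X_n|)$ of lower order than what matters on scale $R$, so a direct gambler's-ruin comparison on $\{R \leq |X_n| \leq \alpha R\}$ yields the linear-in-$d(x,B_R)/R$ bound.

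For \eqref{eq:survie-proche-anneau}, I would bound $\Pbf_x(\hat H_R \wedge \check H_{\alpha R} \geq R^2)$ by splitting according to whether the walk has already made substantial radial progress. The simplest route: on the event that the walk stays in the annulus for time $R^2$, with probability bounded below (by the invariance principle / a Donsker-type estimate, uniformly in $R$) it will have hit either $B_R$ or $\partial B_{\alpha R}$ within that time if it started at distance of order $R$ from both; but since $x$ may start close to $B_R$, I first run the walk until it either hits $B_R$ or reaches distance, say, $(\alpha-1)R/2$ from $B_R$ — by \eqref{eq:ruine-joueur}-type reasoning the latter happens with probability $O(d(x,B_R)/R)$ — and on that favorable-probability event the walk is now at macroscopic distance from both boundaries, so it exits the annulus within an additional time $R^2$ with probability bounded away from $0$, uniformly in $R$ (again by the invariance principle applied to the rescaled walk $R^{-1}X_{\lfloor R^2 t\rfloor}$). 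Combining, $\Pbf_x(\hat H_R \wedge \check H_{\alpha R} \geq R^2) \leq \Pbf_x(\text{walk reaches distance }(\alpha-1)R/2\text{ before }B_R) \times \text{const} \leq c_2 \frac{d(x,B_R)}{R}$.

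The main obstacle I anticipate is the presence of the unbounded exponential-type weight in the \emph{applications} of this lemma (the $(\lambda_N)^{-H_R}$ factors in Lemmas~\ref{lem:eviter-hyperplan-ruine}–\ref{lem:sortie-boule-lambda}), but within Lemma~\ref{lem:arg-martingale+survie-anneau} itself there is no such weight, so the real technical care is just (i) handling the overshoot/boundary corrections in the optional-stopping argument so that the ratio estimate for $a$ is clean and uniform in $R$, and (ii) making the invariance-principle lower bound in the proof of \eqref{eq:survie-proche-anneau} genuinely uniform in $R$ and in the starting point (once the starting point is at macroscopic distance from both spheres). Both are standard but need to be stated carefully; I would lean on the estimates for the potential kernel and for exit distributions of balls collected in \cite{lawler2013intersections,lawlerRandomWalkModern2010} to keep the argument short.
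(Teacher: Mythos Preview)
Your approach to \eqref{eq:ruine-joueur} is essentially the paper's: it quotes the explicit exit-probability formulas from \cite[Prop.~1.5.10]{lawler2013intersections} (for $d\geq 3$) and \cite[Prop.~6.4.1]{lawlerRandomWalkModern2010} (for $d=2$), which are exactly the optional-stopping identities for $|x|^{2-d}$ and the potential kernel that you describe, and then expands with $|x|=R+\ell$.

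Your argument for \eqref{eq:survie-proche-anneau}, however, has a genuine gap. Write $T=\hat H_R\wedge\check H_{\alpha R}$ and let $\sigma$ be the first time the walk either hits $B_R$ or reaches distance $(\alpha-1)R/2$ from $B_R$. You correctly note that the second outcome has probability $O(d(x,B_R)/R)$ by \eqref{eq:ruine-joueur}, and that from the ``middle'' the walk exits within an additional $R^2$ with probability bounded below. But neither of these facts yields the claimed inequality $\Pbf_x(T\geq R^2)\leq C\,\Pbf_x(\text{reach middle before }B_R)$. The missing case is $\{\sigma\geq R^2\}\cap\{\text{walk hits }B_R\text{ at time }\sigma\}$: the walk can linger in the thin inner annulus $\{R<|y|<R+(\alpha-1)R/2\}$ for time $R^2$ without ever touching $B_R$ and without reaching the middle. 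This event sits inside $\{T\geq R^2\}$ but not inside $\{\text{reach middle before }B_R\}$, and since the inner annulus still has width of order $R$, the invariance principle only bounds its probability by a constant, not by $O(d(x,B_R)/R)$. Your decomposition gives no control on it.

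The paper closes this gap by a different, cleaner route: it applies optional stopping to the martingale $|X_{t\wedge T}|^2 - t\wedge T$, which after splitting on $\{X_T\in B_R\}$ versus $\{X_T\notin B_R\}$ yields $\bE_x[T\mid X_T\in B_R]\leq c_\alpha R\,(|x|-R)$; combining with \eqref{eq:ruine-joueur} and Markov's inequality on $T$ gives \eqref{eq:survie-proche-anneau} directly. If you want to rescue your scheme, you would need precisely such an expectation bound on $\sigma$ (or on $T$) to handle the lingering case --- but that is essentially the paper's argument.
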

	
\begin{remark}
\label{rem:anneau-inverse}
Lemma~\ref{lem:arg-martingale+survie-anneau} is useful when the point $x$ is closer to $\partial B_R$ than $\partial B_{\alpha R}$.
When this is not the case, we can apply the same lemma but with different balls, to obtain\footnote{One simply needs to replace $B_R$ by a ball $\tilde B_R$ tangent to $B_{\alpha R}$ such that $d(x,\tilde B_R) = d(x,\partial B_R)$, and $B_{\alpha R}$ by a ball $\tilde B_{(1+\alpha)R}$ with the same center as $\tilde B_R$ but with a large radius so that $B_R \subset \tilde B_{(1+\alpha)R}$.}
\begin{equation}
\label{eq:anneau-inverse}
\Pbf_x \big( \hat H_{R}  > \check H_{\alpha R} \big) \leq c_1' \frac{d(x,\partial B_{\alpha R})}{R} \,,\qquad
 \Pbf_x \big( \hat H_{R} \wedge \check H_{\alpha R} \geq  R^2 \big) \leq c_2 \frac{d(x,\partial B_{\alpha R})}{R} \,.
\end{equation}
\end{remark}

\begin{proof}[Proof of \eqref{eq:ruine-joueur}]
We use Proposition 1.5.10 in \cite{lawler2013intersections}, which gives the following estimate in dimension $d\geq 3$: let $x \in B_{\alpha R}\setminus B_R$, then
\begin{equation}\label{eq:ruine-ito-discret}
	\Pbf_x( \hat H_{R} < \check H_{\alpha R}) =  \frac{|x|^{2-d} - \big(\alpha R \big)^{2-d} + \grdO(R^{1-d})}{ R^{2-d} - \big(\alpha R\big)^{2-d}} \, .
\end{equation}
Injecting $|x| = R+\ell$ with $\ell = |x|-R$ (we may assume that $\frac{\ell}{R}\leq 1/2$ otherwise the bound is trivial), this yields 
\[
\Pbf_x(\hat H_{R} > \check H_{\alpha  R}) = 1- \frac{(1 + \ell R^{-1})^{2-d} - \alpha^{2-d} + \grdO(R^{-1})}{1 - \alpha^{2-d}} \leq c_{\alpha,d} \big( \ell R^{-1} + \grdO(R^{-1}) \big)\, ,
\]
which is the desired result.
		
In dimension $d=2$, we have from \cite[Prop.~6.4.1]{lawlerRandomWalkModern2010} that, analogously as above,
\[
	\Pbf_x(\hat H_{R} < \check H_{\alpha R}) =  \frac{ \ln( \alpha  R) - \ln |x| + \grdO(R^{-1})}{\ln( \alpha  R) - \ln R} \,.
\]
Setting again $|x| = R+\ell$, we get after simplifications that
\[
\Pbf_x(\hat H_{R} > \check H_{\alpha R}) = \frac{\ln(1+\ell R^{-1}) + \grdO(R^{-1})}{\ln \alpha} \leq c_{\alpha ,d} \big( \ell R^{-1} + \grdO(R^{-1}) \big)\, ,
\]
as needed.
\end{proof}
	
\begin{proof}[Proof of \eqref{eq:survie-proche-anneau}]
The proof relies on the usual martingale argument.
We fix some $x\in B_{\alpha R} \setminus B_R$ such that $|x|-R \leq \delta R$, with $\delta=\delta_{\alpha} <\frac12 \alpha$ small enough (but fixed) so that in~\eqref{eq:ruine-joueur} we have $\bP_x(\hat H_R < \check H_{\alpha R}) \leq \frac12$; note that the bound~\eqref{eq:survie-proche-anneau} is trivial in the case $|x|-R >(2c_1)^{-1} R$.

Let us write for simplicity $T\defeq \min\{ \hat H_{R} , \check H_{\alpha R}\}$, and consider the martingale $|X_{t \wedge T}|^2 - t \wedge T$.
Applying the stopping time theorem, we get that
\begin{equation}
	|x|^2 = \bE_x \big[ |X_{t \wedge T}|^2 - t \wedge T \big] \xrightarrow[]{t \uparrow \infty}\bE_x\big[ |X_{T}|^2 - T \big] \, ,
\end{equation}
where we have used dominated and monotonous convergence as we took the limit $t\uparrow \infty$.
Splitting the last expectation according to whether $X_{T} \in B_R$ or not, we have
	\[ |x|^2 = \bP_x\big(X_{T} \in B_{R}\big) \bE_x\big[ |X_{T}|^2 - T \, \big| \, X_{T} \in B_{R} \big] + \bP_x\big(X_{T} \notin B_{R}\big) \bE_x\big[ |X_{T}|^2 - T \, \big| \, X_{T} \notin B_{R} \big]  \, . 
	\]
Rearranging the terms, we obtain
\begin{multline*}
	\probaRW{x}{X_{T} \in B_{R}} \espRW{z}{T \, \big| \, X_{T} \in B_{R}} =  \probaRW{x}{X_{T} \in B_{T}} \espRW{x}{|X_{T}|^2 - |x|^2 \, \big| \, X_{T} \in B_{R}} \\
	+ \probaRW{x}{X_{T} \not\in B_{R}} \espRW{x}{|X_{R}|^2 - T  - |x|^2 \, \big| \, X_{T} \not\in B_{R}} \, .
\end{multline*}
	Since we took $x \notin B_{R}$, on the event $\{X_{T} \in B_{R}\}$ we have $|X_{T}|^2 \leq |x|^2$, so we can bound the first term by $0$.
Using also that $T \geq 0$, we end up with
\[
\probaRW{x}{X_{T} \in B_{T}} \espRW{x}{T \, \big| \, X_{T} \in B_{R}} \leq \probaRW{x}{X_{T} \notin B_{R}} \espRW{x}{|X_{T}|^2 - |x|^2 \, \big| \, X_{T} \notin B_{R}} \, . 
\]
Note that on the event $\{X_{T} \notin B_{R}\}$ and since we have fixed $x$ verifying $|x| -R \leq \delta R$, we have $|X_{T}|^2 - |x|^2 \leq c_{\alpha} R^2$, and recall that we chose $\delta$ small enough so that $\probaRW{x}{X_{T} \in B_{R}} \geq \frac12$ in~\eqref{eq:ruine-joueur}. Therefore, we obtain
\begin{equation}
\label{eq:boundET}
	\espRW{x}{T \, | \, X_{T} \in B_{R}} \leq 2 c_{\alpha} R^2 \probaRW{z}{X_{T} \not\in B_{R}} \leq 2 c_{\alpha} c_1 R (|x|-R) \, ,
\end{equation}
where we have used~\eqref{eq:ruine-joueur} for the last inequality.
We thus get that \(\bP_x(T>  R^2)\) is equal to
\[
\begin{split}
\bP_x(X_T\in B_R)&  \bP_x\big( T > R^2 \, | \, X_{T} \in B_{R}\big) + \bP_x(X_T\notin B_R) \bP_x\big( T >R^2 \, | \, X_{T} \notin B_{R}\big) \\
&\leq \bP_x\big( T >R^2 \, | \, X_{T} \in B_{R}\big) + \bP_x(X_T\notin B_R) \leq 4 c_{\alpha} c_1 \frac{ |x|-R}{R} + c_1 \frac{|x|-R}{R} \,,
\end{split}
\]
where in the last line we have used Markov's inequality together with~\eqref{eq:boundET} for the first term, and~\eqref{eq:ruine-joueur} for the second term.
This concludes the proof of~\eqref{eq:survie-proche-anneau}.
\end{proof}

We are now ready to conclude the proof of Lemma~\ref{lem:sortie-ruine-joueur}, thanks to Assumption~\ref{hyp:D}.

\begin{proof}[Proof of Lemma \ref{lem:sortie-ruine-joueur}]
Using Assumption~\ref{hyp:D}, there is some $\eps_0 > 0$ for which, for all $x \in \Omega_N$, there exists $z \notin \Omega_N$ such that the ball $B(z,\eps_0 N)$ is in $\Omega_N^c$ and $d(x,B(z,\eps_0 N)) \leq 2 d(x,\partial \Omega_N)$.
Now, let $A$ be large enough, so that $\Omega_N \subset B(z,A N)$.

Now, we simply observe that $H_{\partial \Omega_N} \leq \min\{H_{B(z,\eps_0 N)},H_{B(z,AN)^c}\}$, so that using Lemma~\ref{lem:arg-martingale+survie-anneau}-\eqref{eq:survie-proche-anneau} (with $\alpha \defeq A/\gep_0$), we get that 
\[
\Pbf_x(H_{\partial \Omega_N} > N^2) \leq c \frac{B(z,\eps_0 N)}{\gep_0 N} \leq c' \frac{d(x,\partial \Omega_N)}{N} \,.
\]
This concludes the proof of Lemma~\ref{lem:sortie-ruine-joueur}.
\end{proof}

\paragraph*{A Brownian version of Lemma \ref{lem:sortie-ruine-joueur}}

We also have the following continuous version of Lemma \ref{lem:sortie-ruine-joueur}.
Under the uniform exterior ball Assumption~\ref{hyp:D}, there is some $c > 0$ such that
\begin{equation}
	\label{eq:survie-MB}
	\Pbf_x \big( H_{\partial \Omega} > 1 \big) \leq c\, d(x,\partial \Omega) \, .
\end{equation}
Let us now explain how this follows from classical Brownian estimates.

We first begin by proving an analogue of~Lemma \ref{lem:arg-martingale+survie-anneau}. 
Using Itô formula, we have an exact formula for \(\Pbf_x(H_R<H_{\alpha R})\): indeed, \eqref{eq:ruine-ito-discret} holds without the $\grdO(R^{-1})$. 
Again we can assume that $\ell / R$ is small, from which we get \eqref{eq:ruine-joueur}. Now, since $B_t^2 - t$ is a martingale, the proof of \eqref{eq:survie-proche-anneau} also holds in the continuous.

Finally, let $x \in \Omega$ and $z \in \Omega^c$ such that $B(z,\eps_0) \subset \Omega^c$ and $d(x,B(z,\eps_0)) \leq 2 d(x,\partial \Omega)$. If $A > 0$ is large enough so that $\Omega \subset B(z,A)$, we have $H_\Omega \leq \min \big\{ H_{B(z,\eps_0)} , H_{B(z,A)^c} \big\}$. Therefore, $\Pbf_x ( H_{\partial \Omega} > 1 ) \leq \frac{c}{\eps_0 A} d(x,\partial \Omega)$, which is what we wanted to prove.

\subsection{Proof of Lemma \ref{lem:sortie-boule-lambda}}
\label{sec:lemsortie}

The proof is easy. First of all, notice that if we define $\gamma_R \defeq \sup_{v\in B_R} \bP_v(H_R >R^2)$, then we have that $\gamma_R \leq \gamma_d$ for some constant $\gamma_d <1$, uniformly in $R$ large enough --- this is due to the invariance principle, we have $\lim_{R\to\infty}\gamma_R = \bP_0(\sup_{s\in [0,1]}|X_s|<1)$ with $(X_s)_{s\geq 0}$ a Brownian motion.

Then, applying the Markov property iteratively, we have that $\bP_u(H_R \geq  j R^2) \leq (\gamma_d)^j$ for any integer $j$ (uniformly in $u\in B(0,R)$), so that
\[
\bE_u\Big[ e^{c_d H_R /R^2} \Big] \leq \sum_{j\geq 0} e^{c_d (j+1)} \bP_u\big( H_R/R^2 \in [j,j+1) \big) \leq e^{c_d} \sum_{j\geq 0} \big( e^{c_d} \gamma_d \big)^{j}\,.
\]
This is bounded by a constant, for instance if $c_d \defeq \frac12 \ln \gamma_d^{-1}$ so that $e^{c_d}\gamma_d = \gamma_d^{1/2}<1$.
\qed

\subsection{Proof of Lemma \ref{lem:eviter-hyperplan-ruine}}
\label{sec:lemhyperplan}

The proof will mostly rely on Lemma~\ref{lem:arg-martingale+survie-anneau}.
Decomposing over the value of the integer part of $R^{-2} H_R$, we get the bound
\[
\Ebf_0 \Big[ e^{c_d H_{R}/R^2} \ind_{\{  \tau_{\mathcal{H}} > H_R \}} \Big] \leq  e^{c_d} \Pbf_0\big( \tau_{\mathcal{H}}> H_R \big)+ \sum_{k \geq 1} e^{c_d(k+1)} \Pbf_0\big( \tau_{\mathcal{H}}> H_R \geq k R^2 \big) \, .
\]
For any $k\geq 1$, we can use the Markov property at time $R^2$ to get that 
\begin{multline*}
\Pbf_0\big( \tau_{\mathcal{H}}> H_R \geq k R^2 \big)  \leq \Pbf_0\big( \tau_{\mathcal{H}}\wedge H_R \geq R^2 \big) \sup_{v\in B_R} \bP_v\big( H_R \geq (k-1)R^2 \big) \\
\leq \Pbf_0\big( \tau_{\mathcal{H}}\wedge H_R \geq R^2 \big) \times (\gamma_d)^{k-1} \,,
\end{multline*}
with $\gamma_R \defeq \sup_{v\in B_R} \bP_v(H_R >R^2) \leq \gamma_d <1$ is as in the proof of Lemma~\ref{lem:sortie-boule-lambda}.
All together, we obtain that
\[
\Ebf_0 \Big[ e^{c_d H_{R}/R^2} \ind_{\{  \tau_{\mathcal{H}} > H_R \}} \Big] \leq  e^{c_d} \Pbf_0\big( \tau_{\mathcal{H}}> H_R \big)+ \Pbf_0\big( \tau_{\mathcal{H}}\wedge H_R \geq R^2 \big)  \sum_{k \geq 1} e^{c_d} (e^{c_d} \gamma_d)^k \,,
\]
and the last sum is bounded by a constant choosing for instance $c_d \defeq \frac12 \ln \gamma_d^{-1}$.
It therefore remains to estimate the two probabilities in the above display.

In order to be in position to apply Lemma~\ref{lem:arg-martingale+survie-anneau}, we introduce some new sets.
Let $z \in \mathbb{Z}^2$ be such that the ball $\tilde B_{R/2} \defeq B(z,\frac12 R)$ of radius $R/2$ is tangent to $\mathcal{H}$ on the other side of $0$ (so in particular $d(0,\tilde B_{R/2}) =1$), and let also $\tilde B_R \defeq B(z,R)$ and $\tilde B_{2R} \defeq B(z,2R)$.
Then, starting from $0$, by construction we have that $H_R \geq H_{\partial \tilde B_R}$ on the event $\tau_{\mathcal{H}} > H_R$, and also $\tau_{\mathcal{H}} \leq H_{\tilde B_{R/2}}$, so that we get
\[
\bP_0\big( \tau_{\mathcal{H}} > H_R \big)\leq \bP_0\big( \tau_{\mathcal{H}} > H_{\partial \tilde B_{R}} \big)  \leq \bP_0\big( H_{\tilde B_{R/2}} > H_{\partial \tilde B_{R}} \big) \leq \frac{c}{R} \,.
\]
For the last inequality, we have used Lemma~\ref{lem:arg-martingale+survie-anneau}-\eqref{eq:ruine-joueur} (with $\alpha=2$).

On the other hand, we also have by construction that $H_{\tilde B_{R/2}} \wedge H_{ \partial \tilde B_{2 R}} \geq \tau_{\mathcal{H}} \wedge H_R$, so by Lemma~\ref{lem:arg-martingale+survie-anneau}-\eqref{eq:survie-proche-anneau} (with $\alpha=4$), we obtain 
\[
\Pbf_0\big( \tau_{\mathcal{H}}\wedge H_R \geq R^2 \big) \leq \Pbf_0\big( H_{\tilde B_{R/2}} \wedge H_{ \partial \tilde B_{2 R}} \geq (R/2)^2 \big)  \leq \frac{c'}{R} \,.
\]
This concludes the proof of Lemma~\ref{lem:eviter-hyperplan-ruine}.
\qed

\begin{appendix}

\section{Outlining and adapting Bramble and Hubbard's proof of \texorpdfstring{$L^2$}{L2} and \texorpdfstring{$L^\infty$}{Linfty} convergence}
\label{app:Bramble}

In this appendix, we prove Theorems~\ref{th:L2convergence} and~\ref{th:sup-convergence}.
The proof of Bramble and Hubbard \cite{bramble1968effects} applies with very minor modifications, but we provide a complete summary for the sake of completeness.
In order to make the proof closer to that of~\cite{bramble1968effects}, we work with the notation of the discrete Dirichlet problem~\eqref{eq:approxDirichlet}: we consider the discrete Laplacian $\Delta^{(h)}$ on $\Omega^{(h)}$ with Dirichlet boundary conditions, and we denote by $\varphi_1^{(h)}$ its principal eigenfunction.
Our main goal is therefore to show
\begin{equation}
	\label{eq:L2-convergence}
	h^d \sum_{x\in \Omega^{(h)}} \big|\varphi_1^{(h)}(x)-\varphi_1(x) \big|^2 \leq \kappa h^{p} \,,
\end{equation}
in a first time, and
\begin{equation}
	\label{eq:sup-convergence}
	\lim_{h \to 0} \sup_{x \in \Omega^{(h)}} \big| \varphi^{(h)}_1(x) - \varphi_1(x) \big| = 0 \, .
\end{equation}
in a second time.

The proof of the $L^2$ convergence~\eqref{eq:L2-convergence} mostly consists in controlling the difference $|\mu_1^{(h)} - \mu_1|$ of the associated eigenvalues, from which the procedure described in \cite[Section 7]{bramble1968effects} will allows us to conclude.
It heavily relies on a key estimate on the derivative of $\varphi_1$ near the boundary (see \cite[Lem.~6.1-6.2]{bramble1968effects}) that Theorem~\ref{th:reg-cont-cone} (or more precisely Corollary~\ref{cor:derivatives}) will provide. 
The other key estimate is a bound on $\varphi_1^{(h)}$ near the boundary, which Proposition~\ref{prop:unifcone} provides. 
The $L^{\infty}$ convergence will follow from applying almost verbatim the proof of~\cite[Theorem~7.1]{bramble1968effects}.
We now provide a detailed outline of the proof.

\subsection{Eigenvalue convergence}

Before investigating the convergence of discrete eigenvalues, we first recall the minimum-maximum property: letting \(\mu_j^{(h)}\) the $j$-th discrete eigenvalue, we have
\begin{equation}\label{eq:minmax-property}
		\mu^{(h)}_j = \min_{f_1, \dots , f_j} \max_{\alpha_1, \dots , \alpha_j} \frac{1}{\| f \|_{2, h}^2} h^{d-2} \sum_{x \in h \ZZ^d} \sum_{i = 1}^d \big( f(x+h e_i) - f(x) \big)^2 \, ,
\end{equation}
where $\alpha_1 , \dots , \alpha_j$ are real numbers, $f_1, \dots , f_j$ are linearly independent mesh functions that vanish outside $\Omega^{(h)}$, and $f$ is defined as $f = \sum_{k = 1}^j \alpha_k f_k$. 
Note that the sum in \eqref{eq:minmax-property} in over the entire space and can be interpreted as $\| \nabla f \|_{2,h}^2$, thus a Dirichlet energy.

To get a bound on the difference of eigenvalues, we may use Weinberger's method~\cite{weinberger1959error} of comparing finite difference domains. 
Write $\Omega_\star^{(h)}$ for the interior of $\Omega^{(h)}$, so that $\Omega_\star^{(h)} \cup \partial \Omega_\star^{(h)} = \Omega^{(h)}$. 
We may consider the discrete Dirichlet problem
\begin{equation}\label{eq:discrete-Dirichlet-interior}
	\begin{cases}
		-\Delta^{(h)}  w^{(h)} = \mu_\star^{(h)}\, w^{(h)} & \quad \text{ on } \Omega_\star^{(h)} \,,\\ 
		\qquad \ \ w^{(h)}=0 & \quad\text{ on } \partial \Omega_\star^{(h)} \,,
	\end{cases}
\end{equation}
with $w : \Omega^{(h)} \rightarrow \RR$. We write $\mu_{\star,j}^{(h)}$ the ordered eigenvalues of the problem \eqref{eq:discrete-Dirichlet-interior}.

Using (3.5)-(3.6) in \cite{bramble1968effects}, which are derived from works of Weinberger and others, there are positive constants $c_1, c_2$ such that for all $j \in \{ 1 , \dots |\Omega^{(h)}| \}$,
\begin{equation}\label{eq:encadrement-vp-enveloppes}
	(1 - c_1 h^2)\mu^{(h)}_j \leq \mu_j \leq  (1 + c_2 h^2) \mu_{\star,j}^{(h)}\, .
\end{equation}
We now only need to control $|\mu^{(h)}_j - \mu_{\star,j}^{(h)}|$ in order to prove that $\mu^{(h)}_j \to \mu_j$. 
The proof is straightforward with the uniform exterior ball condition of Assumption~\ref{hyp:D}, while in the case of Assumption~\ref{hyp:cone}, the proof in \cite{bramble1968effects} remains valid.
Let us provide a few details.

\smallskip\noindent
\textit{A proof that $\mu^{(h)}_1 \to \mu_1$ under a uniform exterior ball condition.\ }	
Note that in \eqref{eq:minmax-property} if we take $j=1$ and $f = \varphi^{(h)}_1 \ind_{\Omega^{(h)}_\star}$ (this is (5.4) in \cite{weinberger1959error}) we get
\begin{equation*}
	\mu^{(h)}_1 \leq \mu_{\star,1}^{(h)} \leq \bigg( \mu^{(h)}_1 + h^{d-2} \sum_{x \in \partial \Omega_\star^{(h)}} \varphi^{(h)}_1(x)^2 \bigg) \, \Bigg/ \, \bigg( 1 - h^d \sum_{z \in \Omega^{(h)} \setminus \Omega_\star^{(h)}} \varphi^{(h)}_1(z)^2 \bigg) \, .
\end{equation*}
Observe that both $|\Omega^{(h)} \setminus \Omega_\star^{(h)}|$ and $|\partial \Omega_\star^{(h)}|$ are of order $h^{1-d}$. Therefore, using the bound on~$\varphi^{(h)}_1$ given by Proposition~\ref{prop:unifbound}, this yields
\begin{equation*}
	\mu^{(h)}_1 \leq \mu_{\star,1}^{(h)} \leq \frac{ \mu^{(h)}_1 + c h }{ 1 - c' h^3 } \leq \mu^{(h)}_1 + c'' h \, .
\end{equation*}
Recalling \eqref{eq:encadrement-vp-enveloppes} this gives $(1 - c_1 h^2)\mu^{(h)}_1 \leq \mu_1 \leq  (1 + c_2 h^2) (\mu^{(h)}_1 + c'' h)$, hence proving
\begin{equation}\label{eq:vitesse-cv-vp-pos-reach}
	|\mu_1 - \mu^{(h)}_1| \leq K h \,.
\end{equation}

\smallskip\noindent	
\textit{Generalizing to other Lipschitz domains or other eigenvalues.\ }
Under Assumption~\ref{hyp:cone}, applying the previous strategy yields $\mu^{(h)}_1 \leq \mu_{\star,1}^{(h)} \leq \mu^{(h)}_1 + c'' h^{2p-1}$ for \(p = p(\alpha)\) appearing in Proposition~\ref{prop:unifcone}. 
However, from the discussion that follows Proposition~\ref{prop:unifcone}, the exponent $p$ may be arbitrarily small, which means that $h^{2p-1}$ may diverge as $h \to 0$. 
Still, in the case of Lipschitz domains, one can prove (see \cite[Theorem 4.1]{bramble1968effects}) that $|\mu^{(h)}_j - \mu_{\star,j}^{(h)}| \to 0$ using a variation of the previous proof. 
This in fact only requires a Walsh approximation theorem for the domain $\Omega$, which roughly states that harmonic functions on $\Omega$ are close to harmonic functions on a slightly bigger domain; this is satisfied in the case of Lipschitz domains (Bramble and Hubbard refer to \cite[p. 281]{babuska1966numerical}).

\subsection{From eigenvalue to \texorpdfstring{$L^2$}{L2} convergence}
\label{sec:cv-L2}
	
After obtaining the convergence of eigenvalues, we can deduce a control on the $L^2$ error using the eigenvector decomposition.
A simple expansion and the $L^2$-normalisation of $\varphi^{(h)}_1$ implies
\begin{equation}\label{eq:L2-error-produit-scalaire}
	\| \varphi_1 - \varphi_1^{(h)} \|_{L^2,h}^2 = 1 + \| \varphi_1 \|_{L^2,h}^2 - 2 \langle \varphi_1, \varphi_1^{(h)} \rangle_h \, ,
\end{equation}
where $\langle \psi,\psi' \rangle_h = h^d \sum_{x \in \Omega^{(h)}} \psi(x) \psi'(x)$ is the scalar product in $L^2(\Omega^{(h)})$.
Controlling the first derivative of $\varphi_1$ thanks to Corollary~\ref{cor:derivative}, we easily get that $\| \varphi_1 \|_{L^2,h}^2 = 1 + O(h^p)$ as $h \to 0$. 
All that is left is thus to prove that $\langle \varphi_1, \varphi_1^{(h)} \rangle_h \to 1$ as $h \downarrow 0$, with a (polynomial) control on the decay rate.

We introduce the Green's function of the simple random walk on $h \ZZ^d$ killed on the boundary of $\Omega^{(h)}$, that is for $x,y \in \Omega^{(h)} \cup \partial \Omega^{(h)}$:
\begin{equation*}
	G^{(h)}(x,y) = \Ebf_x \Big[ \sum_{k = 0}^{\tau^{(h)}} \ind_{\{X_k = y\}} \Big] \quad \text{where} \quad \tau^{(h)} \defeq \inf \big\{ t \geq 0 \, : \, X_t \in \partial \Omega^{(h)} \big\} \, .
\end{equation*}
Note that for $x,y \in \Omega^{(h)}$ we have $G^{(h)}(x,y) = G^{(h)}(y,x)$. It is also well-known that $G^{(h)}$ is an inverse to the Laplace operator, in the sense that it satisfies
\begin{equation*}
	- \Delta^{(h)}_x G^{(h)}(x,y) \defeq \frac{h^{-2}}{2d} \sum_{|e|=1} \big[ G^{(h)}(x+he,y) - G^{(h)}(x,y) \big] = h^{-2} \delta_{x,y} \quad \text{ for } x \in \Omega^{(h)} \, ,
\end{equation*}
as well as $G^{(h)}(x,y) = \delta_{x,y}$ for $x \in \partial \Omega^{(h)}$.
From this, we deduce a crucial tool to study $G^{(h)}$ in the form of a discrete Poisson formula: let $f$ be a real-valued function on $\Omega^{(h)} \cup \partial \Omega^{(h)}$ and $x \in \Omega^{(h)} \cup \partial \Omega^{(h)}$, we have
\begin{equation}\label{eq:poisson-formula}
	f(x) = h^2 \sum_{y \in \Omega^{(h)}} G^{(h)}(x,y) \big( -\Delta^{(h)} f(y) \big) + \sum_{y \in \partial \Omega^{(h)}} G(x,y) f(y) \, .
\end{equation}
Combining this with the properties of the eigenfunctions/vectors, \cite[Section 5]{bramble1968effects} shows the following crucial identity: for $j \geq 1$,
\begin{equation}
	\label{eq:crucialBramble}
	(\mu^{(h)}_j - \mu) \big\langle \varphi_1 , \varphi^{(h)}_j \big\rangle_h = \big\langle \Delta \varphi_1 - \Delta^{(h)} \varphi_1 \, , \, \varphi^{(h)}_j \big\rangle_h = \mu^{(h)}_j \big\langle \Phi_h \, , \, \varphi^{(h)}_j \big\rangle_h \, ,
\end{equation}
where we have defined the function $\Phi_h$ by
\begin{equation*}
	\Phi_h(x) \defeq h^2 \sum_{y \in \Omega^{(h)}} G^{(h)}(x,y) \big[ \Delta \varphi_1(y) - \Delta^{(h)} \varphi_1(y) \big] \, .
\end{equation*}

Now, note that since $(\varphi^{(h)}_j)_{j \geq 1}$ is an orthonormal basis of $L^2(\Omega^{(h)})$, we get
\begin{equation}
	\label{eq:reste-parseval}
	\Big| \| \varphi_1 \|_{L^2,h}^2 - \big\langle \varphi_1, \varphi_1^{(h)} \big\rangle_h^2 \Big| = \sum_{j = 2}^{|\Omega^{(h)}|} \big\langle \varphi_1, \varphi^{(h)}_j \big\rangle_h^2 = \sum_{j = 2}^{|\Omega^{(h)}|} \bigg( \frac{\mu^{(h)}_j}{\mu^{(h)}_j - \mu} \bigg)^2 \big\langle \Phi_h , \varphi^{(h)}_j \big\rangle_h^2 \, ,
\end{equation}
where we used~\eqref{eq:crucialBramble} for the second identity.
Note that for any $j \geq 2$, $\mu^{(h)}_j \geq \mu^{(h)}_2$ (the eigenvalues are ordered) and that since $\mu_1$ is simple, the convergence of eigenvalues implies that $\mu^{(h)}_2 \to \mu_2 > \mu_1$. 
In particular, provided $h$ small enough, the ratio of eigenvalues in \eqref{eq:reste-parseval} is non-increasing in $j \geq 2$, and therefore is bounded by a constant independent of $h$ and $j$.
Thus, we have
\begin{equation*}
	\big| 1 - \big\langle \varphi_1, \varphi_1^{(h)} \big\rangle_h^2 \big| \leq \big| 1- \| \varphi_1 \|_{L^2,h}^2\big| +  C \sum_{j \geq  2} \big\langle \Phi_h , \varphi^{(h)}_j \big\rangle_h^2 \leq C \big( h^{p} + \| \Phi_h \|_{L^2,h}^2 \big) \, ,
\end{equation*}
using again that $ 1-\| \varphi_1 \|_{L^2,h}^2 = \grdO(h^p)$ as noticed above and the orthonormality of $(\varphi^{(h)}_j)_{j \geq 1}$.

All that is left is to prove that $\| \Phi_h \|_{2,h}^2 \leq C' h^p$.
Using a Taylor expansion, we easily see that $\Delta \varphi_1(y) - \Delta^{(h)} \varphi_1(y)$ regroups all the even orders of the expansion of $\varphi_1$ in a $h$-neighborhood of~$x$.
Stopping the Taylor expansion at the $4$-th order, we get
\begin{equation*}
	\Delta \varphi_1(y) - \Delta^{(h)} \varphi_1(y) = h^{-2} \sum_{\alpha} \Big[ \frac{D^\alpha \varphi_1 (y)}{\alpha !} + \eps_\alpha(h) \Big] \, ,
\end{equation*}
where the sum ranges over all multi-indices $\alpha = (\alpha_1, \dots , \alpha_4) \in \NN^4$ with $\alpha_i \geq 0, \alpha_1 + \dots + \alpha_4 = 4$, and where $\eps_\alpha(h) \to 0$ as $h \downarrow 0$ is an error term.
Therefore, Theorem~\ref{th:reg-cont-cone} imply that
\begin{equation}
	\label{eq:local-error-LP-int}
	\quad \big| \Delta \varphi_1(y) - \Delta^{(h)} \varphi_1(y) \big| \leq K h^2 d(y, \partial \Omega)^{p - 4} \,,\qquad  \forall y \in \Omega^{(h)}_{\geq 4} \,,
\end{equation}
for some constant $K > 0$.
Here, we have denoted $\Omega^{(h)}_{\geq 4}$ for the set of points of $\Omega^{(h)}$ at (graph) distance at least \(4h\) from the boundary; also, $p=1$ under Assumption~\ref{hyp:D} or $p\in (0,1)$ is given in~\eqref{def:p} under Assumption~\ref{hyp:cone}.
On the other hand, for the $y$'s near the boundary, we simply use Propositions~\ref{prop:unifbound} and~\ref{prop:unifcone} to get
\begin{equation}\label{eq:local-error-LP-bord}
	\big| \Delta \varphi_1(y) - \Delta^{(h)} \varphi_1(y) \big| \leq (2d + 1 + \mu_1) \sup_{z \in  \Omega^{(h)}_{<4}} |\varphi_1(z)| \leq K' h^p \,.
\end{equation}
where $\Omega^{(h)}_{<4} \defeq \Omega^{(h)} \setminus \Omega^{(h)}_{\geq 4}$.
Combining \eqref{eq:local-error-LP-int} and \eqref{eq:local-error-LP-bord}, we get
\begin{equation*}
	|\Phi_h(x)| \leq h^2 \sum_{y \in \Omega^{(h)}_{<4}} G^{(h)}(x,y) K h^2 d(y, \partial \Omega)^{p - 4} + K' h^p h^2 \sum_{y \in \Omega^{(h)} \setminus \Omega^{(h)}_{\geq 4}} G^{(h)}(x,y) \, .
\end{equation*}

We now use the following estimates on $G^{(h)}$: there are constants $c_1, c_2 > 0$ such that
\begin{equation}
	\sum_{y \in \Omega^{(h)} \setminus \Omega^{(h)}_{\geq 4}} G^{(h)}(x,y) \leq c_1 \,, \qquad h^{4} \sum_{y \in \Omega^{(h)}_{\geq 4}} G^{(h)}(x,y) d(y, \partial \Omega)^{p - 4} \leq c_2 h^{p} \, .
\end{equation}
These can easily be proven using probabilistic methods. 
For the first term, we see that the sum is the average number of visits to $\Omega^{(h)}_{< 4}$, which is dominated by a geometric random variable, hence the first term is bounded by a constant $c_1$. 
For the second term, we decompose the sum over $d(y,\partial \Omega) \in [2^k, 2^{k+1}] h$, so that the second term is bounded by a constant times
\[ 
h^{p} \sum_{k} 2^{-(4-p)k} G^{(h)}(x,A^{(h)}_k) \quad \text{with $A^{(h)}_k = \big\{ y \in \Omega^{(h)} \, : \, d(y,\partial \Omega^{(h)}) \in [2^k, 2^{k+1}] h \big\}$} \, . 
\]
We then observe that $G^{(h)}(x,A^{(h)}_k)$ is at most $c 2^{2k}$ for some $c > 0$, as this is the mean time a random walk stays inside a set with characteristic size $2^k$. 
Therefore, the sum over $k$ is bounded, leaving us with~$c_2 h^{p}$.

Combining all of the above, we therefore get $\sup_{x \in \Omega^{(h)}} |\Phi_h(x)| \leq c h^p$, which concludes the proof of Theorem~\ref{th:L2convergence}.
\qed

\subsection{Upgrading to a \texorpdfstring{$L^{\infty}$}{Linfty} convergence}
	
We now turn to the proof of Theorem~\ref{th:sup-convergence}, applying the ideas of \cite[\S7]{bramble1968effects}.
We use the Poisson formula \eqref{eq:poisson-formula} with $\varphi_1^{(h)}$ and $\varphi_1$ to get that for any \(x\in \Omega^{(h)}\)
\begin{align*}
	\varphi^{(h)}_1(x) - \varphi_1(x) &= h^2 \sum_{y \in \Omega^{(h)}} G^{(h)}(x,y) \big[ (-\Delta^{(h)} \varphi_1(y))  - (-\Delta^{(h)} \varphi_1^{(h)} (y)) \big] \\
	&= h^2 \sum_{y \in \Omega^{(h)}} G^{(h)}(x,y) \big[ \Delta \varphi_1(y) - \Delta^{(h)} \varphi_1(y)  + \Delta^{(h)} \varphi_1^{(h)}(y) - \Delta \varphi_1(y) \big] \\
	&= \Phi_h(x) + h^2 \sum_{y \in \Omega^{(h)}} G^{(h)}(x,y) \big[ \mu_1^{(h)} \varphi_1^{(h)}(y) - \mu_1 \varphi_1(y) \big] \\
	&= \Phi_h(x) + h^2 \sum_{y \in \Omega^{(h)}} G^{(h)}(x,y) \big[ (\mu_1^{(h)} - \mu_1) \varphi_1^{(h)}(y) + \mu_1 (\varphi_1^{(h)}(y) - \varphi_1(y) ) \big]\,.
\end{align*}	
We can now control the last sum using the eigenvalue and $L^2$ estimates derived above. 

We first note that according to Theorem~\ref{th:reg-cont-cone} we have that $\| \varphi^{(h)} \|_\infty$ is bounded by a constant, so
\begin{align*}
	\Big| (\mu_1^{(h)} - \mu_1) h^2 \sum_{y \in \Omega^{(h)}} G^{(h)}(x,y) \varphi_1^{(h)}(y) \Big| &\leq c |\mu_1^{(h)} - \mu_1| h^2 \sum_{y \in \Omega^{(h)}} G^{(h)}(x,y) \,.
\end{align*}
Then, we can write $h^2 \sum_{y \in \Omega^{(h)}} G^{(h)}(x,y) =h^2 \Ebf_x\big[ \tau^{(h)} \big]$ with $\tau^{(h)}$ the exit time of $\Omega^{(h)}$ by the simple random walk on $h\ZZ^d$, which is therefore bounded by a constant uniformly in $x \in \Omega^{(h)}$, by the invariance principle.
	
On the other hand, using Cauchy--Schwartz inequality, we have
\begin{equation*}
\Big| \mu_1 h^2 \sum_{y \in \Omega^{(h)}} G^{(h)}(x,y) \big( \varphi_1^{(h)}(y) - \varphi_1(y) \big) \Big| \leq \mu_1 h^2 \Big( \sum_{y \in \Omega^{(h)}} G^{(h)}(x,y)^2 \Big)^{1/2} \cdot \| \varphi_1^{(h)} - \varphi_1 \|_{L^2,h} \, .
\end{equation*}
In dimension $d \geq 3$, since $G^{(h)}(x,y)$ is bounded, the first factor above is bounded by a constant times $h^2 \Ebf_x\big[ \tau^{(h)} \big]^{1/2}$, which is itself bounded by a constant times $h$. In dimension~$2$, we instead have $G^{(h)}(x,y) \leq c \log h^{-1}$ (see \cite[Proposition 6.3.2]{lawlerRandomWalkModern2010}) which leads to a bound $c h^2 \Ebf_x\big[ \tau^{(h)} \big]^{1/2} \log h^{-1} \leq c' h \log h^{-1}$.
	
Combining the above, we find that for $h$ small enough,
\begin{equation*}
	\sup_{x \in \Omega^{(h)}} \big| \varphi^{(h)}_1(x) - \varphi_1(x) \big| \leq \sup_{x \in \Omega^{(h)}} \big| \Phi_h(x) \big| + c_1 |\mu_1^{(h)} - \mu_1| + c_2 h \log(h^{-1})^{\ind_{d = 2}} \| \varphi_1^{(h)} - \varphi_1 \|_{L^2,h} \, .
\end{equation*}
We have seen in the previous sections that all the terms above go to \(0\) as $h\downarrow 0$: more precisely, we have that 
$\sup_{x \in \Omega^{(h)}} \big| \Phi_h(x) \big| \leq C h^p$ and $\| \varphi_1^{(h)} - \varphi_1 \|_{2,h} \leq C h^{p/2}$. 
We therefore get that 
\[
	\sup_{x \in \Omega^{(h)}} \big| \varphi^{(h)}_1(x) - \varphi_1(x) \big| \leq C h^p + c_1 |\mu_1^{(h)} - \mu_1| \xrightarrow{h\downarrow0} 0 \,.
\]
In particular, under Assumption~\ref{hyp:D} we have $p=1$ and \(|\mu_1^{(h)} - \mu_1| \leq K h\) (recall~\eqref{eq:vitesse-cv-vp-pos-reach}), which gives the announced rate of decay.
Under Assumption~\ref{hyp:cone}, one needs a better control on $|\mu_1^{(h)} - \mu_1|$.

\end{appendix}

\subsection*{Acknowledgements.} 
The authors would like to thank Antoine Mouzard for numerous, enthusiastic, enlightening discussions.
We would also like to thank Karine Beauchard and Monique Dauge for helpful pointers to some of the literature.
The authors are also very much indebted to anonymous referees who pointed out several interesting references and helped improved the presentation of the paper.
Both authors also acknowledge the support of grant ANR Local (ANR-22-CE40-0012).

\printbibliography[heading=bibintoc]

\end{document}